\newtheorem{thm}{Theorem}[section]
\newtheorem{prop}[thm]{Proposition}
\newtheorem{lemma}[thm]{Lemma}
\newtheorem{cor}[thm]{Corollary}
\theoremstyle{definition}
\newtheorem{defi}[thm]{Definition}
\newtheorem{ex}[thm]{Example}
\newtheorem{rem}[thm]{Remark}
\newcommand{\R}{\mathbb R}
\newcommand{\N}{\mathbb N}
\newcommand{\bb}{\mathbb{B}}
\newcommand{\bD}{\boldsymbol{D}}
\newcommand{\nodal}{\mathcal{N}}
\newcommand{\I}{{\rm I}}
\newcommand{\II}{{\rm I\!I}}
\newcommand{\III}{{\rm I\!I\!I}}
\newcommand{\IV}{\ensuremath{\widetilde{\rm I\!I\!I}}}
\newcommand{\vtheta}{\theta}
\newcommand{\essinf}{\operatorname*{ess\,inf}}
\DeclareMathOperator{\per}{Per}
\newcommand{\bOm}{\overline\Omega}
\DeclareMathOperator{\loc}{loc}
\DeclareMathOperator{\Lip}{Lip}
\DeclareMathOperator{\supp}{supp}
\DeclareMathOperator{\cH}{\mathcal{H}}
\DeclareMathOperator{\grad}{grad}
\DeclareMathOperator{\dive}{div}
\newcommand\eps{\ensuremath{\varepsilon}}
\title{Nodal counts for the Robin problem on Lipschitz domains}
\author{Katie Gittins}
\address{Department of Mathematical Sciences, Durham University,
Mathematical Sciences \& Computer Science Building,
Upper Mountjoy Campus, Stockton Road,
Durham, DH1 3LE,
United Kingdom.}
\email{katie.gittins@durham.ac.uk}
\author{Asma Hassannezhad}
\address{University of Bristol,
School of Mathematics,
Fry Building,
Woodland Road,
Bristol, 
BS8 1UG, United Kingdom}
\email{asma.hassannezhad@bristol.ac.uk}
\author{Corentin Léna}
\address{Università degli Studi di Padova, Dipartimento di Tecnica e Gestione dei Sistemi Industriali (DTG), Stradella S. Nicola 3, 36100 Vicenza, Italy}
\address{Università degli Studi di Padova, Dipartimento di Matematica ``Tullio Levi-Civita'', via Trieste 63, 35121 Padova, Italy}
\email{corentin.lena@unipd.it}
\author{David Sher}
\address{DePaul University, Department of Mathematical Sciences, 2320 N. Kenmore Ave., Chicago IL 60614, USA}
\email{dsher@depaul.edu}
\subjclass[2010]{35P15, 35P20, 35P05}
\keywords{Courant-sharp Robin eigenvalues, Lipschitz domains}
\date{\today}
\begin{document}

\begin{abstract}
We consider the Courant-sharp eigenvalues of the Robin Laplacian for bounded, connected, open sets in $\R^n$, $n \geq 2$, with Lipschitz boundary. We prove Pleijel's theorem which implies that there are only finitely many Courant-sharp eigenvalues in this setting as well as an improved version of Pleijel's theorem, extending previously known results that required more regularity of the boundary. In addition, we obtain an upper bound for the number of Courant-sharp Robin eigenvalues of a bounded, connected, convex, open set in $\R^n$ with $C^2$ boundary that is explicit in terms of the geometric quantities of the set and the norm sup of the negative part of the Robin parameter. 
\end{abstract}

\maketitle

\section{Introduction}
Let $\Omega \subset \R^n$ be a bounded, connected, open set with Lipschitz boundary. Let $h \in L^{\infty}(\partial\Omega, \R)$.
We consider the eigenvalue problem for the Robin Laplacian
\begin{equation}\label{eq:roblap}
    \begin{cases}
        \Delta u = \mu u & \text{ in } \Omega,\\
        \partial_\nu u + h u = 0 &\text{ on } \partial \Omega, 
    \end{cases}
\end{equation}
where $\Delta = - \dive \grad$ is the positive Laplacian and $\nu$ is the unit outward-pointing normal along $\partial \Omega$. The eigenvalues of the Robin Laplacian on $L^2(\Omega)$, counted with multiplicity, can be written as
$$ \mu_1(\Omega, h) \leq \mu_2(\Omega, h) \leq \dots \leq \mu_k(\Omega, h) \leq \dots \nearrow +\infty.$$
We use the notation $\Delta_\Omega^{R,h}$ to refer to the Robin Laplacian in \eqref{eq:roblap}. The case where $h \equiv 0$ corresponds to the Neumann Laplacian and the case where $h \to +\infty$ corresponds to the Dirichlet Laplacian.

Let $u_k$ be an eigenfunction of $\Delta_\Omega^{R,h}$ corresponding to the $k$-th eigenvalue $\mu_k(\Omega, h)$. The connected components of $\Omega \setminus \overline{\{x \in \Omega : u_k(x) = 0\}}$ are called nodal domains of $u_k$.
Courant's Nodal Domain theorem asserts that any eigenfunction $u_k$ corresponding to the $k$-th eigenvalue $\mu_k(\Omega, h)$ has at most $k$ nodal domains. In the case where $\mu_k(\Omega, h)$ has an eigenfunction with exactly $k$ nodal domains, we say that $\mu_k(\Omega, h)$ is a Courant-sharp eigenvalue and $u_k$ is a Courant-sharp eigenfunction.

Let $\mathcal{N}_\Omega^h(k)$ denote the number of nodal domains of the eigenfunction of $\Delta_\Omega^{R,h}$ corresponding to the $k$-th eigenvalue (counted with multiplicities).
Pleijel's theorem \cite{Pl56,BM82} asserts that for a bounded, connected, open set $\Omega \subset \R^n$, 
\begin{equation}\label{eq:pldir}
\limsup_{k\to\infty}\frac{\nodal_\Omega^{+\infty}(k)}{k}\le \gamma(n),
\end{equation}
where $\gamma(n):=(2\pi)^n/\omega_n^2 j^n_{\frac{n-2}{2}} <1,$ $\omega_n$ is the Lebesgue measure of a ball in $\R^n$ of radius $1$ and $j_{\frac{n-1}{2}}$ is the smallest positive zero of the Bessel function $J_{\frac{n-2}{2}}$. Pleijel's theorem for the Dirichlet Laplacian also holds in the Riemannian setting, see \cite{Peet57,BM82}, and results have recently been obtained in the sub-Riemannian setting, see \cite{FH24}.

It has been shown that the upper bound in \eqref{eq:pldir} is not sharp. Improved versions of Pleijel's theorem that obtain a smaller constant (depending only on $n$) in the right-hand side of \eqref{eq:pldir} have been obtained in \cite{D14, S14, B15}.
Pleijel's theorem implies that there are only finitely many Courant-sharp Dirichlet eigenvalues of $\Omega$.
This result, called the weak Pleijel theorem, holds for any open set $\Omega \subset \R^n$ of finite Lebesgue measure (see, e.g., \cite{vdBkG16cs}). 

The extension of Pleijel's theorem to the Neumann and Robin eigenvalue problems, as well as investigating geometric bounds on  Courant-sharp eigenvalues and the number of them, has been the subject of several studies summarised below over the last few years. In this paper, we further these developments by focusing on the Robin eigenvalue problem on a Lipschitz domain $\Omega$ with an arbitrary Robin parameter $h\in L^\infty(\partial\Omega)$.\\

De Ponti, Farinelli, and Violo \cite{DFV} proved Pleijel's theorem in the setting of metric-measure spaces, in particular, for the Neumann problem on a so-called uniform domain in the non-smooth setting of RCD spaces. Since Lipschitz domains are examples of uniform domains in $\R^n$ , see e.g. \cite{Jones,MS79,vai88}, their results imply Pleijel's theorem for the Neumann problem on Lipschitz domains in $\mathbb{R}^n$. Pleijel's theorem for the Neumann problem was first proved for compact surfaces with piecewise real-analytic boundary \cite{P} and then extended to bounded domains in $\R^n$ with $C^{1,1}$ boundary \cite{cL16}. In \cite{BCM}, Pleijel's theorem was shown to hold for chain domains (roughly speaking, a collection of bounded, disjoint, planar domains and a collection of thin necks joining these domains) and it was shown that the Courant-sharp Neumann eigenvalues are bounded uniformly in terms of the geometric quantities of a family of such domains.
Pleijel's theorem for the Robin problem on $C^{1,1}$ domains in $\R^n$ in the case where $h \geq 0$ was proven in \cite{cL16}, while in \cite{HS} it was shown that an improved version of Pleijel's theorem for the Robin problem on $C^{1,1}$ domains holds for any $h \in L^\infty(\partial\Omega)$. 

We extend the result in \cite{DFV} for the Neumann problem on Lipschitz domains to the Robin problem on Lipschitz domains for any $h \in L^\infty(\Omega)$ (see Theorem \ref{thm:robinplej}), {relaxing the boundary regularity required in \cite{HS}.}

The proof of Pleijel's theorem for the Dirichlet Laplacian relies on two main ingredients: the Faber-Krahn inequality and Weyl's law. The primary challenge in extending this proof to other boundary conditions lies in adapting the argument involving the Faber-Krahn inequality.
A key step in the proof is to obtain an upper bound for the {Neumann} Rayleigh quotient of a Robin eigenfunction on a nodal domain in terms of the corresponding eigenvalue, the geometry of the underlying domain and the Robin parameter. To obtain such a bound, we make use of the fact that any Lipschitz domain has an outward-pointing vector field (see Section \ref{ss:rq}).
Moreover,  in this setting, we improve the upper bound in Pleijel's theorem   (see Theorem \ref{thm:pleijel improved}). The proof of an improved version of Pleijel's theorem for the Robin problem on a Lipschitz domain is rather intricate and requires first establishing a quantitative version of certain results proved in \cite{DFV}, including a quantitative form of the Faber-Krahn inequality for mixed Dirichlet-Neumann eigenvalues on domains with `small' volume (see Proposition \ref{prop:fkq}).  \\   

Although in the paper we only consider Lipschitz domains in $\mathbb{R}^n$, one can show that Pleijel's theorem and its improved version for the Robin problem also hold in the Riemannian setting. This is because the results of \cite{DFV} are valid in a very general context, and other techniques used in the proof can be adapted to the Riemannian setting, as discussed in \cite[Theorem 2.5]{HS} and \cite{D14,BM82}. \\ 

With Pleijel's theorem in hand, it is natural to investigate how many Courant-sharp eigenvalues there are and how the geometry of the underlying domain and the Robin parameter $h$ can be used to quantify this number. Roughly speaking, while Weyl's law plays a key role in the final step of the proof of Pleijel's theorem, the remainder in Weyl's law plays an important role in getting more information on the count of the Courant-sharp eigenvalues. Hence, the bound is sensitive to the geometry of the domain and its boundary. Employing bounds on the remainder has been successfully used in \cite{vdBkG16cs,BH16}.  In \cite{BH16}, an upper bound for the number of Courant-sharp Dirichlet eigenvalues of a bounded, open set in $\R^2$ with $C^2$ boundary is obtained in terms of the area, the perimeter, bounds on the principal curvatures and the cut-distance to the boundary.
In \cite{vdBkG16cs}, several upper bounds for the number of Courant-sharp Dirichlet eigenvalues of various Euclidean domains are obtained. In particular, an upper bound for the number of Courant-sharp Dirichlet eigenvalues of a bounded, open, convex set in $\R^n$ is obtained in terms of the $(n-1)$-dimensional Hausdorff measure of the boundary and the volume of the set. To extend the results for the Dirichlet Laplacian to the Neumann and Robin problems, more regularity and geometric assumptions are needed. For the Neumann problem and the Robin problem with $h \geq 0$, upper bounds for the number of Courant-sharp eigenvalues of a bounded, open, convex set in $\R^n$, $n \geq 2$, with $C^2$ boundary are obtained in \cite{GL18} in terms of the volume of the set, the isoperimetric ratio and the principal curvatures of the boundary. 

We obtain an upper bound for the number of Courant-sharp Robin eigenvalues of a bounded, open, connected, convex set $\Omega \subset \R^n$, $ n \geq 2$, with $C^2$ boundary, that is explicit in terms of the geometric quantities of $\Omega$ and the Robin parameter (see Theorem~\ref{thm:number}). To do this, we first obtain an inequality comparing the Robin counting function to a shifted Neumann counting function and employ a result from \cite[Appendix A]{GL18} which gives an upper bound on the Neumann counting function for such domains. We then obtain an upper bound for the largest Courant-sharp Robin eigenvalue  (see Theorem~\ref{thm:eig}), for which the convexity of the domain is not required. Our result then follows by substituting the bound for the eigenvalue into the bound for the counting function.

\section*{Plan of the paper}
In Section \ref{ss:rq}, we obtain an upper bound for the Neumann Rayleigh quotient of a Robin eigenfunction on a nodal domain. We then employ this bound together with techniques from \cite{DFV} to prove Pleijel's theorem in Section \ref{ss:pleij}, where we also prove an improved version of Plejiel's theorem. However, the proof of the latter is elaborate and entails additional technical steps. For the convenience of the reader and to illustrate the additional steps required to obtain an improved version, we present the proof of Pleijel's theorem first, followed by the improved version. In order to obtain the improved version of Pleijel's theorem, we {in particular} require a quantitative version of the Faber-Krahn inequality for domains with small volume in this setting, which we prove in the Appendix. In Section \ref{sec:uppernumber}, we obtain an explicit upper bound on the number of Courant-sharp Robin eigenvalues of a bounded, open, connected, convex set $\Omega \subset \R^n$, $ n \geq 2$, with $C^2$ boundary.

\section*{Acknowledgement}
A.\,H. acknowledges  support of EPSRC grant EP/T030577/1. C.\,L. acknowledges support from the INdAM GNAMPA Project  ‘‘\emph{Operatori differenziali e integrali in geometria spettrale}’’ (CUP E53C22001930001). D.\,S. acknowledges support from the AMS-Simons grant 501949-9208.
The authors are very grateful to the referee for many helpful suggestions and comments.

\newpage
\section{Neumann Rayleigh quotients of Robin eigenfunctions on nodal domains}\label{ss:rq}

Throughout the paper we assume that $\Omega\subset \mathbb{R}^n$ is a Lipschitz domain, that is, an open, bounded and connected Lipschitz set.

\begin{defi} \label{defOutward} Let $\Omega$ be a Lipschitz domain. We call the mapping $F: \mathbb{R}^n \to \mathbb{R}^n$ an \emph{outward-pointing vector field} relative to $\Omega$ if
    \begin{enumerate}[(i)]
        \item $F$ is of class $C^\infty$ with compact support;
        \item there exists $\gamma_F(\Omega)>0$ such that, for a.e. $x\in\partial \Omega$, we have
            \[F(x)\cdot \nu(x)\ge \gamma_F(\Omega).\]
    \end{enumerate}
\end{defi}

The following proposition is due to Mitrea-Taylor \cite[Appendix A]{MT99} and to Verchota~\cite{Ver82}. For the convenience of the reader, we provide a proof.

\begin{prop}\label{propOutward} Any Lipschitz domain $\Omega$ has an outward-pointing vector field.
\end{prop}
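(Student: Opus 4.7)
The plan is to construct $F$ by patching together locally constant vector fields via a smooth partition of unity. By the definition of Lipschitz domain, for each $p \in \partial \Omega$ there exist an open ball $U_p \ni p$, a rigid motion $R_p$, and a Lipschitz function $\varphi_p \colon V_p \subset \mathbb{R}^{n-1} \to \mathbb{R}$ with Lipschitz constant $L_p$, such that in the rotated coordinates $\Omega \cap U_p$ coincides with the subgraph of $\varphi_p$ and $\partial \Omega \cap U_p$ with its graph. Rademacher's theorem applied to $\varphi_p$ shows that the outward unit normal $\nu$ exists $\mathcal{H}^{n-1}$-a.e.\ on $\partial \Omega \cap U_p$ and in the rotated coordinates is given by $(-\nabla \varphi_p, 1)/\sqrt{1+|\nabla\varphi_p|^2}$. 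Letting $\xi_p$ denote the unit vector in the original coordinates corresponding to $e_n$ in the rotated frame, it follows that
\[\nu(x) \cdot \xi_p \,\ge\, \frac{1}{\sqrt{1 + L_p^2}} \quad \text{for a.e.\ } x \in \partial \Omega \cap U_p.\]

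Next, since $\partial \Omega$ is compact, I would extract a finite subcover $U_1, \dots, U_N$ of $\partial \Omega$, with associated unit vectors $\xi_1, \dots, \xi_N$ and Lipschitz constants $L_1, \dots, L_N$. Choose a smooth partition of unity $\{\chi_i\}_{i=1}^{N}$ with $\chi_i \in C^\infty_c(U_i)$, $\chi_i \ge 0$, and $\sum_{i=1}^{N} \chi_i \equiv 1$ on some open neighborhood of $\partial \Omega$, and define
\[F(x) \,:=\, \sum_{i=1}^{N} \chi_i(x)\, \xi_i.\]
As a finite sum of $C^\infty_c$ functions, $F$ is smooth with compact support. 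For a.e.\ $x \in \partial \Omega$, using only nonnegative terms and the fact that $\chi_i(x) = 0$ whenever $x \notin U_i$,
\[F(x) \cdot \nu(x) \,=\, \sum_{i=1}^{N} \chi_i(x)\, (\xi_i \cdot \nu(x)) \,\ge\, \frac{1}{\sqrt{1 + L^2}} \sum_{i=1}^{N} \chi_i(x) \,=\, \frac{1}{\sqrt{1 + L^2}},\]
where $L := \max_i L_i$. Hence $\gamma_F(\Omega) := 1/\sqrt{1+L^2} > 0$ works, concluding the proof.

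The only point requiring care is verifying that the intrinsic outward unit normal of the Lipschitz boundary agrees, $\mathcal{H}^{n-1}$-almost everywhere on each $\partial\Omega \cap U_i$, with the normal computed from the local graph representation, so that the transversality estimate $\xi_i \cdot \nu(x) \ge 1/\sqrt{1+L_i^2}$ is valid throughout the chart rather than only near the base point. This identification is a standard consequence of the definition of Lipschitz domain combined with Rademacher's theorem, and no deeper input is needed: the heart of the argument is a routine partition-of-unity patching, with the Lipschitz hypothesis used precisely to provide the quantitative transversality bound in each chart.
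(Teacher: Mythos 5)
Your proof is correct and follows essentially the same strategy as the paper: cover $\partial\Omega$ by finitely many Lipschitz graph charts, use in each chart the constant ``vertical'' direction $\xi_i$ with the transversality bound $\xi_i\cdot\nu\ge 1/\sqrt{1+L_i^2}$ a.e., and combine via smooth cutoffs supported in the charts. The only cosmetic difference is that you normalize the cutoffs into a partition of unity (yielding the explicit constant $1/\sqrt{1+L^2}$), whereas the paper simply sums the local cutoff fields and takes the minimum of the local constants.
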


\begin{proof}For any $x_0\in \partial \Omega$, we call $F_{x_0}:\mathbb R^n\to\mathbb R^n$ a \emph{local outward-pointing vector field} at $x_0$, relative to $\Omega$, if
\begin{enumerate}[(i)]
    \item $F_{x_0}$ is of class $C^\infty$ with compact support;
    \item for all $x\in \partial \Omega$ such that $\nu(x)$ exists, $F_{x_0}(x)\cdot \nu(x)\ge 0$;
    \item there exists $\gamma_{x_0}>0$ and an open neighborhood $U_{x_0}$ of $x_0$ such that, for all $x\in U_{x_0}\cap \partial\Omega$ at which $\nu(x)$ exists,
    \[F_{x_0}(x)\cdot \nu(x)\ge \gamma_{x_0}.\]
\end{enumerate}

If, relative to $\Omega$, there exists a local outward-pointing vector field at every  point of $\partial \Omega$, then there exists a (global) outward-pointing vector field. Indeed, by compactness of $\partial \Omega$, we can find a finite family of points $\{x_1,\dots,x_N\}\subset \partial \Omega$ such that the open sets $U_{x_1},\dots,U_{x_N}$ cover $\partial \Omega$. Then,
\[F:=F_{x_1}+\cdots+F_{x_N}\]
is an outward-pointing vector field in the sense of Definition \ref{defOutward}.

To conclude the proof, we note that the existence of a local outward-pointing vector field at every point $x_0\in \partial \Omega$ follows easily from the definition of a Lipschitz domain. Up to a suitable choice of coordinates, we can assume that $x_0=0$ and that
\[\Omega\cap\left(B^{n-1}_r\times(-M,M)\right)=\left\{(x',y)\,:\, y<f(x')\right\},\]
where (with $r$ and $M$ some positive constants)
    $B^{n-1}_r$ is the ball centered at $0$ of radius $r$ in $\mathbb R^{n-1}$, and
  $f:B^{n-1}_r\to (-M,M)$ is a Lipschitz function such that $f(0)=0$.
We set 
\[F_0(x):=\chi(x)\,\left(\begin{array}{c} 0\\\vdots\\0\\1\end{array}\right),\]
with $\chi$ a smooth non-negative function such that
  $0\le \chi \le 1$ pointwise,
     the support of $\chi$ is contained in {$B^{n-1}_{r/2} \times (-M/2,M/2)$}, and
   $\chi=1$ pointwise in {$B^{n-1}_{r/4} \times (-M/4,M/4)$}.
The vector field $F_0$ is then locally outward-pointing at $0$.
\end{proof}

\begin{rem}
    By scaling, the outward-pointing vector field $F$ may be chosen to be unitary in a neighborhood of $\partial\Omega$, see e.g. \cite[Appendix A]{MT99}. 
\end{rem}

The existence of an outward-pointing vector field allows us to prove the fundamental inequalities used in this paper, which are a generalization of \cite[Proposition 2.2]{HS}.

\begin{prop}\label{prop:keyinequality}
    Suppose that $\Omega$ is a Lipschitz domain with outward-pointing vector field $F$. Suppose that $h\in L^{\infty}(\partial\Omega,\R)$, and that $\mu$ and $u$ are an eigenvalue and a corresponding eigenfunction respectively of the Robin Laplacian $\Delta_\Omega^{R,h}$. Finally, suppose that $D$ is a nodal domain of $u$. Then 
\begin{equation}\label{eqRayleigh}
\frac{\int_D|\nabla u|^2\, dx}{\int_D u^2\, dx} \le \big(\sqrt{\mu+\Gamma_1(\Omega,F)H} + \Gamma_2(\Omega,F)H\big)^2,
\end{equation}
where
\[H=\Vert\min\{0,h(x)\}\Vert_{L^\infty(\partial\Omega)}=\Vert \max\{-h(x),0\}\Vert_{L^{\infty}(\partial \Omega)}\]
and where
\begin{align}
    \Gamma_1(\Omega,F):=&\frac{1}{\gamma_F(\Omega)}\sup_{\Omega}\left|\nabla\cdot F\right|, \label{gam1}\\
    \Gamma_2(\Omega,F):=&\frac{2}{\gamma_F(\Omega)}\sup_{\Omega} \left|F\right|. \label{gam2}
\end{align}
\end{prop}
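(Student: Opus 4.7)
The plan is to combine a nodal-domain Green's identity for $u$ with a divergence-theorem trick exploiting the outward-pointing vector field $F$ to control a boundary integral by interior ones. First, by interior elliptic regularity, the Robin eigenfunction $u$ is smooth inside $\Omega$, so it vanishes classically on $\partial D \cap \Omega$, and hence $u\chi_D \in H^1(\Omega)$ with weak gradient $\chi_D \nabla u$. Testing the weak formulation of \eqref{eq:roblap} against $v = u\chi_D$ gives
\[
\int_D |\nabla u|^2\, dx \;=\; \mu \int_D u^2\, dx \;-\; \int_{\partial D \cap \partial\Omega} h\, u^2\, d\sigma.
\]
Since $-h \leq H$ a.e.\ on $\partial\Omega$, the boundary term is at most $H\int_{\partial D \cap \partial\Omega} u^2\, d\sigma$, so it remains to bound this boundary integral in terms of $\int_D u^2$ and $\int_D |\nabla u|^2$.

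This is where $F$ enters. I would apply the divergence theorem on $\Omega$ to $u^2\chi_D\, F$; since $u=0$ on $\partial D \cap \Omega$ only $\partial D \cap \partial\Omega$ contributes, and there $F\cdot\nu \geq \gamma_F(\Omega)$. This yields
\[
\gamma_F(\Omega)\int_{\partial D \cap \partial\Omega} u^2\, d\sigma \;\leq\; \int_D u^2\, |\dive F|\, dx + 2\int_D |u|\,|\nabla u|\,|F|\, dx.
\]
Bounding $|\dive F|$ and $|F|$ by their suprema on $\Omega$ and applying Cauchy--Schwarz to the cross term, the definitions \eqref{gam1}--\eqref{gam2} give
\[
\int_{\partial D \cap \partial\Omega} u^2\, d\sigma \;\leq\; \Gamma_1(\Omega,F)\int_D u^2\, dx \;+\; \Gamma_2(\Omega,F)\left(\int_D u^2\, dx\right)^{1/2}\!\left(\int_D |\nabla u|^2\, dx\right)^{1/2}.
\]

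Substituting this into the first display and dividing by $\int_D u^2\, dx$, the square root $R$ of the Neumann Rayleigh quotient on $D$ satisfies the quadratic inequality
\[
R^2 \;\leq\; \bigl(\mu + \Gamma_1(\Omega,F)\,H\bigr) + \Gamma_2(\Omega,F)\,H\, R.
\]
Solving for the positive root and using the elementary estimate $\sqrt{b^2 + 4a^2} \leq 2a + b$ for $a,b\geq 0$ with $a = \sqrt{\mu + \Gamma_1 H}$ and $b = \Gamma_2 H$ gives $R \leq \sqrt{\mu + \Gamma_1 H} + \Gamma_2 H$, and squaring yields \eqref{eqRayleigh}. The main obstacle is the very first step: a nodal domain of a Robin eigenfunction on a Lipschitz domain need not itself have Lipschitz boundary, so classical integration by parts on $D$ is not directly available. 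The resolution is to avoid all traces on $\partial D$ by doing everything on $\Omega$ via the test function $u\chi_D \in H^1(\Omega)$ and the divergence identity applied to $u^2\chi_D F$; once this technical point is settled, the vector-field argument and the quadratic analysis are routine and mirror the strategy of \cite[Proposition 2.2]{HS}, with $F$ replacing the $C^{1,1}$ normal extension used there.
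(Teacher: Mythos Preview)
Your proof is correct and follows essentially the same approach as the paper: bound the boundary integral $\int_{\partial D\cap\partial\Omega}u^2$ via the divergence theorem applied to $u^2F$, combine with Green's identity for the Rayleigh quotient, and solve the resulting quadratic in $R$. The only minor difference is in handling the possibly non-Lipschitz nodal boundary: the paper invokes the Sard-type approximation from \cite{BM82,cL16,HS}, whereas you work directly on $\Omega$ with the test function $u\chi_D\in H^1(\Omega)$; both routes are standard and produce the same inequalities.
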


\begin{proof}
We assume for simplicity that $\partial D$ is Lipschitz and that $\partial D\cap\partial \Omega$ is an $(n-2)$-dimensional submanifold of $\partial D$. If not, one uses an approximation argument via Sard's theorem as in \cite{BM82,cL16,HS}.

Since $u$ vanishes on $\partial D\cap \Omega$, Green's identity implies \begin{align*}
    \int_{\partial D\cap\partial \Omega}u^2\,ds\le\ & \frac{1}{\gamma_F(\Omega)}\int_{\partial D\cap\partial \Omega} u^2(F\cdot \nu)\,ds =\frac{1}{\gamma_F(\Omega)}\int_{\partial D} \left(u^2\,F\right)\cdot \nu\,ds\\
       =\ &   \frac{1}{\gamma_F(\Omega)}\int_D \left(2\,u\nabla u\cdot F+\mbox{div}(F)\,u^2\right)\,dx\\
       \le\ &\frac{1}{\gamma_F(\Omega)}\int_D \left(2\,|u|\,\left|\nabla u\right|\,|F|+\left|\mbox{div}(F)\right|\,u^2\right)\,dx \\
       \le\ &\int_D \left(\Gamma_2(\Omega,F)\,|u|\,\left|\nabla u\right|+\Gamma_1(\Omega,F)\,u^2\right)\,dx\,,
 \end{align*}
 and from the Cauchy-Schwarz inequality, we obtain
    \begin{equation}\label{eqIneqGreen}
    \int_{\partial D\cap\partial \Omega} u^2\,ds\le \Gamma_1(\Omega,F)\int_D u^2\,dx+\Gamma_2(\Omega,F)\left(\int_Du^2\,dx\right)^{1/2}\left(\int_D \left|\nabla u\right|^2\,dx\right)^{1/2}.
\end{equation}

 The bound \eqref{eqRayleigh} follows from \eqref{eqIneqGreen} precisely as in \cite[Proposition 2.2]{HS}. Namely, applying Green's identity to $u$ in the nodal domain $D$,
 \begin{align*}\int_D|\nabla u|^2\, dx =\ & \int_D u\Delta u\, dx + \int_{\partial D}u(\nabla u\cdot \nu)\, ds\\
  =\ & \mu\int_D u^2\, dx - \int_{\partial D\cap\partial\Omega} hu^2\, ds\\
  \le\ & \mu\int_D u^2\, dx + H\int_{\partial D\cap\partial\Omega} u^2\, ds,
 \end{align*}
where we have used the facts that $u=0$ on $\partial D\cap\Omega$ and that $\nabla u\cdot \nu = -hu$ on $\partial\Omega$. Inserting the bound \eqref{eqIneqGreen} here and rearranging yields \eqref{eqRayleigh}.
\end{proof}

A particularly useful example of an outward-pointing vector field, for a domain $\Omega$ with \emph{smooth} boundary, is the gradient of the distance function to the boundary, see Example \ref{ex:out}. For more general $\Omega$, we may sometimes take the outward-pointing vector field to be the gradient of a replacement for the distance function, which we define here:

\begin{defi}
    Let $\Omega\subset\R^n$ be a bounded Lipschitz domain. We say that a $C^2$ function $g=g(x)$ defined on an open neighborhood of $\overline{\Omega}$ is an \emph{outward-pointing function} if there exists $\gamma_g>0$ for which $\nabla g\cdot \nu \geq 
    \gamma_g $ almost everywhere on $\partial\Omega$.
\end{defi}
\begin{rem}\label{rem:outfnct}
    If $g=g(x)$ is a smooth outward-pointing function, then its gradient $F=\nabla g$ is an outward-pointing vector field, with
    \[\Gamma_1(\Omega,F) = \frac{1}{\gamma_F(\Omega)}\sup_{\Omega}|\Delta g|,\]
    \[\Gamma_2(\Omega,F) = \frac{2}{\gamma_F(\Omega)}\sup_{\Omega}|\nabla g|.\]
\end{rem}
This can be useful in special cases to analyze the geometric dependence of the constants $\Gamma_1(\Omega,F)$ and $\Gamma_2(\Omega,F)$. 

It is an intriguing question whether every bounded Lipschitz domain admits an outward-pointing function.  We highlight a few important examples that admit an outward-pointing function, where the geometric dependency of these constants can be expressed more explicitly. 
\begin{ex}\label{ex:out}    
\begin{enumerate}[a)]
    \item Let $\Omega$ be an open, bounded, connected subset of $\R^n$, $n \geq 2$, with $C^2$ or $C^{1,1}$ boundary. Then we can define the outward-pointing function $g$ to be any $C^2$ function that coincides with the distance function $d(\cdot,\partial \Omega)$ on a neighborhood of the boundary staying within the positive reach of the boundary, that is a neighborhood of the boundary where there exists a unique nearest point in $\partial\Omega$.  When the boundary is $C^2$, the positive reach of $\partial \Omega$ is a neighborhood of the boundary defined by the cut-distance (the distance between $\partial \Omega$ and its cut locus). When the boundary is $C^{1,1}$, the fact that it has a positive reach is established in \cite[Theorem 4.12]{Fed59}. In both cases, the distance function from the boundary is $C^2$ or $C^{1,1}$, respectively. For the latter, see~\cite[Lemma 2.3]{HS}.   Then $\gamma_{\nabla g}(\Omega)=1$, and $\Gamma_1(\Omega,\nabla g)$ and $\Gamma_2(\Omega,\nabla g)$ can be expressed in terms of bounds on the Laplacian and the gradient of the distance function. See Lemma \ref{lem:bdsprop22} for an explicit bound for $C^2$ domains.
    
    In Section \ref{sec:uppernumber}, we use these types of bounds in order to obtain an upper bound for the largest Courant-sharp Robin eigenvalue for a $C^2$  convex domain, extending the result in \cite{GL18} to the case of the Robin problem where the parameter can be negative.
    \\

    \item Another important family of examples are curvilinear polygons. Let $\Omega$ be a curvilinear polygon (i.e., a planar domain with smooth boundary except for a finite number of vertices). In this case, we can also construct an outward-pointing function $g$ for which $\Gamma_1(\Omega,\nabla g)$ and $\Gamma_2(\Omega,\nabla g)$ depend only on a list of geometric quantities described in~\cite{BCM}. 
    
    In \cite{BCM},  Beck, Canzani, and Marzuola considered a family of \textit{chain domains}. They defined a chain domain as a collection of domains with smooth boundaries except for finitely many vertices that are joined by thin necks. They proved Pleijel's theorem for the Courant-sharp Neumann eigenvalues of chain domains as well as an upper bound for  Courant-sharp Neumann eigenvalues of such domains. Curvilinear polygons are a subfamily of chain domains that have no necks. 
    
    We outline the approach for constructing an outward-pointing function for curvilinear polygons.  For some $\delta>0$ which depends on some explicit geometric quantities, we take the (local) outward-pointing function to be $d(\cdot, \partial \Omega)$, in a $\delta$-neighborhood of each smooth arc of the boundary. In a $\delta$-neighborhood of each of the vertices, we view the angle as the graph $\Gamma f = \{(x,y): y = f(x)\}$ of a function $f$, where the vertex is located at $(0, f(0))$, and the $y$-axis is directed along the bisection of the angle. We define the (local) outward-pointing function to be $g(x,y) = y$. Then, in this neighborhood, $\gamma_{\nabla g}$ can be bounded from below by some explicit geometric quantities.

We can glue these functions together using a partition of unity. Roughly speaking, this gluing occurs in the region where the distance to each vertex is between $\delta/2$ and $\delta$. We arrange the partition of unity so that it depends only on the tangential coordinate along the boundary and not on the distance to the boundary. Consequently, the gradient of the partition of unity is parallel to the boundary and is bounded by $C\delta^{-1}$, while the Laplacian of the partition of unity is bounded by $C\delta^{-2}$, where $C$ is a universal constant. Moreover, since the normal derivatives of the partition functions vanish, the normal derivative of the outward-pointing function is bounded from below by $1$ away from the vertices. Near the vertices, the lower bound remains away from zero and can be explicitly given in terms of geometric quantities.

By using Proposition \ref{prop:keyinequality} it is possible to extend the results of \cite{BCM} for the Robin problem on curvilinear polygons. 
However, in the next section, we provide a different approach to prove Pleijel's theorem for the Robin problem in the more general setting of Lipschitz domains in $\R^n$, using Proposition \ref{prop:keyinequality} and adapting the techniques from~\cite{DFV}.
\end{enumerate}
\end{ex}

\section{Pleijel's Theorem for the Robin problem for Lipschitz domains in $\R^n$}\label{ss:pleij}

The proof of Pleijel's theorem for the Robin problem with an arbitrary Robin parameter $h\in L^\infty(\partial \Omega, \R)$ in \cite{HS} relies on the techniques from \cite{cL16}, which require the $C^{1,1}$ regularity of the domain, and an estimate relating the Robin eigenvalue on a nodal domain to the first mixed Dirichlet-Neumann eigenvalue of that domain. By using the techniques used in \cite{DFV}  together with Proposition \ref{prop:keyinequality}, we obtain Pleijel's theorem for Lipschitz domains. However, to obtain an improved version, the proof becomes more complex because we also need to establish a quantitative version of the Faber-Krahn inequality for the first mixed Dirichlet-Neumann eigenvalue on domains with small volume. Therefore, we include the proofs of both Pleijel's theorem and the improved version of Pleijel's theorem. 

We begin with Pleijel's theorem.

\begin{thm}\label{thm:robinplej}
  Let $\Omega\subset\R^n$ be a bounded Lipschitz domain. Then
\[\limsup_{k\to\infty}\frac{\nodal_\Omega^h(k)}{k}\le \gamma(n)<1,\]
where $\gamma(n):=\frac{(2\pi)^n}{\omega_n^2 j^n_{\frac{n-2}{2}}}.$
\end{thm}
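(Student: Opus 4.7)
The plan is to follow the strategy of \cite{DFV}, who proved Pleijel's theorem for the Neumann Laplacian on uniform domains, feeding in Proposition \ref{prop:keyinequality} to absorb the Robin boundary condition. The classical scheme reduces the proof of Pleijel's theorem to two ingredients: a Faber--Krahn-type lower bound on a suitable first eigenvalue of each nodal domain (in terms of its volume) and Weyl's law for the Robin eigenvalue counting function on $\Omega$.

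First, I would fix an outward-pointing vector field $F$ relative to $\Omega$, whose existence is guaranteed by Proposition \ref{propOutward}, and a nodal domain $D$ of the $k$-th Robin eigenfunction $u_k$. Since $u_k$ vanishes on $\partial D\cap \Omega$, its restriction $u_k|_D$ is an admissible trial function for the first eigenvalue $\lambda(D)$ of the mixed Dirichlet--Neumann problem on $D$ (Dirichlet on $\partial D\cap \Omega$, Neumann on $\partial D\cap \partial \Omega$). Combining this with Proposition \ref{prop:keyinequality} yields
\[
\lambda(D)\ \le\ \bigl(\sqrt{\mu_k(\Omega,h)+\Gamma_1(\Omega,F)\,H}+\Gamma_2(\Omega,F)\,H\bigr)^2 =:\Lambda_k,
\]
and $\Lambda_k = \mu_k(\Omega,h)+O\bigl(\sqrt{\mu_k(\Omega,h)}\bigr)$ as $k\to\infty$, since $F$, $H$ and the constants $\Gamma_i(\Omega,F)$ are fixed.

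Next, I would invoke the asymptotic Faber--Krahn inequality for the first mixed Dirichlet--Neumann eigenvalue on uniform (and hence on Lipschitz) domains established in \cite{DFV}: for every $\eta>0$ there exists $v_0=v_0(\Omega,\eta)>0$ such that every sub-domain $D\subset \Omega$ with $|D|<v_0$ satisfies
\[
\lambda(D)\ \ge\ (1-\eta)\, \frac{\omega_n^{2/n}\, j_{\frac{n-2}{2}}^{2}}{|D|^{2/n}}.
\]
Splitting the nodal domains of $u_k$ into those of volume at least $v_0$ (whose total count is bounded by $|\Omega|/v_0$) and those of volume less than $v_0$, the two displayed inequalities yield a uniform lower bound on the volumes of the small nodal domains, and summing gives
\[
\nodal_\Omega^h(k)\ \le\ \frac{|\Omega|}{v_0}\ +\ \frac{|\Omega|\,\Lambda_k^{n/2}}{\bigl((1-\eta)\,\omega_n^{2/n}\, j_{\frac{n-2}{2}}^{2}\bigr)^{n/2}}.
\]

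Finally, by Weyl's law for the Robin Laplacian on a Lipschitz domain, $\mu_k(\Omega,h)\sim (2\pi)^2\,\omega_n^{-2/n}\,(k/|\Omega|)^{2/n}$, combined with $\Lambda_k/\mu_k(\Omega,h)\to 1$, I would deduce
\[
\limsup_{k\to\infty}\frac{\nodal_\Omega^h(k)}{k}\ \le\ \frac{(2\pi)^n}{(1-\eta)^{n/2}\,\omega_n^2\, j_{\frac{n-2}{2}}^{n}}\ =\ \frac{\gamma(n)}{(1-\eta)^{n/2}},
\]
and letting $\eta\to 0$ yields the theorem. The main obstacle is the second inequality above: for nodal domains that touch $\partial\Omega$, the Neumann portion of the boundary can in principle pull $\lambda(D)$ well below the pure Dirichlet first eigenvalue, and it is only the uniform-domain property of $\Omega$ that permits one to show, via the machinery of \cite{DFV}, that as $|D|\to 0$ one recovers the sharp Dirichlet Faber--Krahn constant.
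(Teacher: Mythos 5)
Your overall scheme (Rayleigh-quotient bound from Proposition \ref{prop:keyinequality}, a Faber--Krahn input for nodal domains, then Weyl's law) is the right skeleton, but the key lemma you invoke is false as stated, and this is exactly where the real work of the paper lies. You claim that for every $\eta>0$ there is $v_0>0$ such that \emph{every} subdomain $D\subset\Omega$ with $|D|<v_0$ satisfies $\lambda(D)\ge(1-\eta)\,\omega_n^{2/n}j_{\frac{n-2}{2}}^{2}\,|D|^{-2/n}$. This cannot hold: take $\Omega$ with a flat boundary piece (say a polytope, which is Lipschitz) and let $D$ be a small half-ball centered at a boundary point, with Neumann condition on the flat face. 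By even reflection, $\lambda(D)|D|^{2/n}=2^{-2/n}\lambda_1^D(\bb)|\bb|^{2/n}$ for arbitrarily small $|D|$, so the inequality fails for all $\eta<1-2^{-2/n}$ no matter how small $v_0$ is; the same phenomenon occurs asymptotically near any smooth boundary. Accordingly, \cite{DFV} does not prove what you attribute to it: their almost-sharp Faber--Krahn bound (Lemma \ref{prop:DFV}(b) in the paper) requires, besides $|U|\le\vtheta_0$, the extra hypothesis $|U\cap\Omega_\delta|/|U|\le\vtheta_1$, i.e.\ that $U$ does not concentrate in a thin neighborhood $\Omega_\delta$ of $\partial\Omega$; for general small subdomains only the non-sharp bound $\lambda_1(U)|U|^{2/n}\ge c_2$ of Lemma \ref{prop:DFV}(a) is available.

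Consequently your two-class counting (large versus small nodal domains) is incomplete: a priori, a positive proportion of the nodal domains of $u_k$ could hug the boundary and evade the sharp constant. The paper's proof repairs exactly this by a three-class decomposition: type-I (volume $>\vtheta_0$), type-II (small volume but $|D_j\cap\Omega_\delta|>\vtheta_1|D_j|$), and type-III (small volume, little mass in $\Omega_\delta$). Type-II domains are counted using the weak Faber--Krahn constant $c_2$ together with $|\Omega_\delta|<\delta$, which gives a contribution to $\nodal_\Omega^h(k)/k$ proportional to $\delta$; only type-III domains use the $(1-\epsilon)$-sharp constant. One then sends $\delta\to0$ (with $\vtheta_1$ independent of $\delta$) and finally $\epsilon\to0$. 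Your closing remark that the uniform-domain property lets one ``recover the sharp Dirichlet Faber--Krahn constant'' as $|D|\to0$ for boundary-touching domains is precisely what is not true; the uniform-domain machinery only yields the weak bound there, and the sharpness is salvaged by making the boundary layer's measure, and hence the number of such nodal domains relative to $k$, negligible.
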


We first state a {simplified} version of the results of  \cite[Theorem 5.1, and Theorem 5.3]{DFV} for open subsets of a Lipschitz domain in $\R^n$ in the following lemma. Note that Lipschitz domains are examples of uniform domains in $\R^n$ for which the results of  \cite{DFV} hold. These results 
give a version of the Faber-Krahn inequality for the mixed Dirichlet-Neumann problem on domains with small volume.  

Let $\Omega$ be an open Lipschitz domain in $\R^n$. For any open set $U\subset \Omega$, we consider the following mixed Dirichlet-Neumann problem.
\[\begin{cases}
    \Delta f=\lambda f& \text{in } U,\\
    \partial_\nu f=0 & \text{on } \partial U\cap \partial \Omega,\\
    f=0& \text{on } \partial U\cap \Omega.
\end{cases}\]
We denote its first eigenvalue by $\lambda_1(U)$.
When $U$ is compactly contained in $\Omega$, it is the Dirichlet eigenvalue problem on $U$. 

\begin{lemma}\label{prop:DFV}\begin{itemize}
\item[a)] There exist positive constants $c_1=c_1(\Omega)$ and $c_2=c_2(\Omega,n)$ such that for every open set $U\subset \Omega$ with $|U|\le c_1$, we have  
\[\lambda_1(U)|U|^{2/n}\ge c_2.\]
    \item[b)]
For every $\epsilon\in(0,1)$ and $\delta>0$, there {exists} a {neighborhood $\Omega_{\delta}$ of $\partial \Omega$} with $|\Omega_{\delta}|<\delta$, and constants $\vtheta_0=\vtheta_0(\Omega,n,\epsilon,\delta)$ and $\vtheta_1=\vtheta_1(\Omega,n,\epsilon)$ such that for any open set $U\subset \Omega$ with $$|U|\le \vtheta_0\quad \text{and}\quad \frac{|U\cap \Omega_{\delta}|}{|U|}\le \vtheta_1,$$
the following holds 
\begin{equation}\label{eq:FK}
\lambda_1(U)|U|^{2/n}\ge (1-{\epsilon})\lambda_1^D(\bb)|\bb|^{2/n},    
\end{equation}
where $\bb$ is the ball of radius $1$ in $\R^n$.

\end{itemize}
\end{lemma}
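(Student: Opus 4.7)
The overall approach will rely on two classical facts about bounded Lipschitz domains $\Omega \subset \R^n$: they admit a bounded Sobolev extension operator $E \colon H^1(\Omega) \to H^1(\R^n)$ with operator norm $C_E = C_E(\Omega)$, and they satisfy a Sobolev inequality. Both parts of the lemma will be derived by extending an eigenfunction $f$ of the mixed problem on $U$ to a function on $\R^n$ and comparing with sharp inequalities in $\R^n$. The device that makes this work is that $f$ vanishes on $\partial U \cap \Omega$, so its extension by zero to $\Omega \setminus U$ lies in $H^1(\Omega)$, and one can then apply $E$ to it.

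For part (a), let $f$ be the first eigenfunction on $U$ with eigenvalue $\lambda_1(U)$; after extending by zero inside $\Omega$ and then applying $E$, one obtains $\tilde f \in H^1(\R^n)$ with $\|\tilde f\|_{H^1(\R^n)} \le C_E \|f\|_{H^1(U)}$. The Sobolev embedding gives $\|\tilde f\|_{L^{2^*}(\R^n)} \le S(n)\|\nabla \tilde f\|_{L^2(\R^n)}$, and Hölder's inequality on $U$ (where $\tilde f = f$) yields
\[
\|f\|_{L^2(U)}^2 \le |U|^{2/n} \|\tilde f\|_{L^{2^*}(\R^n)}^2 \le S(n)^2 C_E^2 |U|^{2/n} \bigl(\|\nabla f\|_{L^2(U)}^2 + \|f\|_{L^2(U)}^2\bigr).
\]
Dividing by $\|f\|_{L^2(U)}^2$, using $\|\nabla f\|_{L^2(U)}^2 = \lambda_1(U) \|f\|_{L^2(U)}^2$, and restricting to $|U|$ small enough that the lower-order term can be absorbed, produces $\lambda_1(U)|U|^{2/n} \ge c_2(\Omega,n)$, with the threshold $c_1(\Omega)$ coming from the absorption step.

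For part (b), the plan is to cut off $f$ near $\partial\Omega$ and apply the sharp Faber-Krahn inequality in $\R^n$. Fix $\delta'<\delta$ and choose a smooth $\eta \in C_c^\infty(\Omega)$ with $0\le \eta \le 1$, $\eta=1$ outside $\Omega_{\delta}$, $\eta$ vanishing in a neighborhood of $\partial \Omega$, and $|\nabla \eta|\le C/(\delta-\delta')$; shrink $\delta$ so that $|\Omega_\delta|$ is as small as needed. Then $\eta f$, extended by zero, lies in $H^1_0(\R^n)$ and has support of measure at most $|U|$, so the classical Faber-Krahn inequality yields
\[
\int_{\R^n} |\nabla(\eta f)|^2 \, dx \ge \lambda_1^D(\bb)|\bb|^{2/n}|U|^{-2/n} \int_{\R^n}(\eta f)^2 \, dx.
\]
It remains to replace $\eta f$ by $f$ on both sides at the cost of a factor $(1-\epsilon)$. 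Expanding, the $L^2$ error is at most $\|f\|_{L^2(U\cap \Omega_\delta)}^2$, which by Hölder and the Sobolev/extension argument of part (a) is controlled by $(|U\cap \Omega_\delta|/|U|)^{2/n}\|f\|_{L^2(U)}^2$ times a constant depending only on $\Omega,n$; the gradient error $\|\nabla(\eta f)-\nabla f\|_{L^2}^2$ produces terms $\int |\nabla \eta|^2 f^2$ and cross terms, which are bounded by $C(\delta-\delta')^{-2}\|f\|_{L^2(U\cap \Omega_\delta)}^2$ after Cauchy-Schwarz. Choosing $\vtheta_0$ small (in terms of $\delta$ and $\epsilon$) and $\vtheta_1$ small (in terms of $\epsilon$ and $\Omega,n$) makes all error terms $\le \epsilon$ times the relevant norms, giving the desired inequality.

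The main obstacle will be the bookkeeping in part (b): one must thread the quantities $|U|,|U\cap \Omega_\delta|,\delta,\epsilon$ so that the thresholds $\vtheta_0,\vtheta_1$ end up with the stated dependencies, and in particular so that $\vtheta_1$ can be chosen independently of $\delta$. This requires carefully separating the two regimes, namely controlling the cutoff error by the relative measure $|U\cap \Omega_\delta|/|U|$ (with constants independent of $\delta$, via the Sobolev inequality in $\R^n$ applied to the extension $\tilde f$) and controlling the absolute smallness of $|U|$ (so that lower-order $L^2$ terms can be absorbed, a step which is allowed to depend on $\delta$ through $\|\nabla \eta\|_\infty$). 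Once this separation is in place, the $(1-\epsilon)$ factor emerges from bounding each error by a small multiple of $\int_U |\nabla f|^2$ and of $\int_U f^2$.
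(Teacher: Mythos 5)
First, a point of comparison: the paper does not prove this lemma at all --- it is quoted as a simplified form of \cite[Theorems 5.1 and 5.3]{DFV}, whose proofs go through the relative isoperimetric inequality and P\'olya--Szeg\H{o} rearrangement in the metric-measure (PI space) setting (this is also the machinery the paper's Appendix adapts for the quantitative version). Your route --- Sobolev extension plus H\"older for (a), and a boundary cutoff plus the classical Euclidean Faber--Krahn inequality for (b) --- is a genuinely different and more elementary, Euclidean-specific argument, and its overall architecture is sound: the zero extension across $\partial U\cap\Omega$ (best justified by working with near-minimizers in $\Lip_c(U)$ relative to $\bOm$, which is how $\lambda_1(U)$ is defined here, rather than with the eigenfunction itself), the fact that $\eta f$ has support of measure at most $|U|$, and the separation ``$\vtheta_1$ independent of $\delta$, $\vtheta_0$ allowed to depend on $\delta$ through $\|\nabla\eta\|_\infty$'' are all exactly right.

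There are, however, two concrete gaps as written. (1) In part (a) you invoke $\|\tilde f\|_{L^{2^*}(\R^n)}\le S(n)\|\nabla\tilde f\|_{L^2(\R^n)}$, which is vacuous for $n=2$ (the paper's setting is $n\ge2$); naive substitutes (a fixed $L^p$ embedding, or Ladyzhenskaya with only $H^1$-boundedness of the extension) lose the sharp exponent $2/n$. It is fixable --- e.g.\ use that the Stein extension is bounded on $L^2$ as well as $H^1$, so in $n=2$ one gets $\|f\|_{L^2(U)}^2\le |U|^{1/2}\|\tilde f\|_{L^4}^2\le C|U|^{1/2}\|\tilde f\|_{L^2}\|\tilde f\|_{H^1}\le C'|U|^{1/2}\|f\|_{L^2(U)}\|f\|_{H^1(U)}$, which does give $\lambda_1(U)|U|\ge c$ --- but some such fix must be stated. (2) In part (b) you claim $\int_{U\cap\Omega_\delta}f^2\le C(\Omega,n)\,\bigl(|U\cap\Omega_\delta|/|U|\bigr)^{2/n}\int_U f^2$. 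The extension/Sobolev argument actually gives $\int_{U\cap\Omega_\delta}f^2\le C\,|U\cap\Omega_\delta|^{2/n}\,(\lambda_1(U)+1)\int_U f^2$, and converting $|U\cap\Omega_\delta|^{2/n}\lambda_1(U)$ into $(|U\cap\Omega_\delta|/|U|)^{2/n}$ requires an \emph{upper} bound on $\lambda_1(U)|U|^{2/n}$, which is not among the hypotheses (part (a) gives only a lower bound). This is easily repaired: either note that one may assume $\lambda_1(U)|U|^{2/n}\le\lambda_1^D(\bb)|\bb|^{2/n}$, since otherwise \eqref{eq:FK} holds trivially, or keep the factor $(\lambda_1(U)+1)$ and absorb the resulting error terms into $\epsilon\,\lambda_1(U)$ using the lower bound of part (a) and the smallness of $\vtheta_0,\vtheta_1$; with either repair the stated dependencies $\vtheta_0(\Omega,n,\epsilon,\delta)$ and $\vtheta_1(\Omega,n,\epsilon)$ come out as required. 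With these two points addressed, your proof is a valid self-contained alternative to citing \cite{DFV}, at the price of being tied to the Euclidean/Lipschitz setting, whereas the DFV route generalizes to non-smooth spaces.
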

We assume without loss of generality that $\vtheta_0\le c_1$ throughout this section and in the Appendix.

\noindent
\begin{proof}[Proof of Theorem \ref{thm:robinplej}]
 Let $D$ be a nodal domain of $u_k$, an eigenfunction corresponding to the Robin eigenvalue $\mu_k$. Then combining with Inequality \eqref{eqRayleigh}, we have
    \begin{equation}\label{eq:lambda1}
        \lambda_1(D)\le \frac{\int_D |\nabla u_k|^2}{\int_D u_k^2}\le \big(\sqrt{\mu_k+\Gamma_1(\Omega,F)H} + \Gamma_2(\Omega,F)H\big)^2 = \mu_k + o(\mu_k). \end{equation}
        The last identity is for $k$ large enough so that $\mu_k>0$. Throughout the proof, we can assume this is the case.       
Let $\{D_j\}_{j=1}^{\nodal_\Omega^h(k)}$ be the nodal domains of $u_k$. For given $\epsilon\in(0,1), \delta\in(0,c_1)$, let $c_1$, $\Omega_\delta$, $\vtheta_0$ and $\vtheta_1$ be as in Lemma \ref{prop:DFV}. We now proceed as in \cite{DFV}, categorising the nodal domains into three disjoint classes as follows. For the reader's convenience, we include the details here.
\begin{itemize}
    \item[I.] $|D_j|> \vtheta_0$;
    \item[I\!I.]$|D_j|\le \vtheta_0$ and ${|D_j\cap \Omega_{\delta}|}> \vtheta_1 {|D_j|}$;
    \item[I\!I\!I.]$|D_j|\le \vtheta_0$ and ${|D_j\cap \Omega_{\delta}|}\le \vtheta_1 {|D_j|}$,
\end{itemize}
 Let $N_I$, $N_{\II}$ and $N_{\III}$ denote the number of nodal domains in each family respectively. Note that $\nodal_\Omega^h(k)= N_I+N_{I\!I}+N_{I\!I\!I}$.\\

 \noindent\textbf{Type I nodal domains.~} We clearly have $
  N_I\le |\Omega|/\vtheta_0.$ 
  Therefore, 
  \begin{equation}\label{eq:tI}
      \frac{N_I}{k}\le \frac{|\Omega|}{\vtheta_0 k}.
  \end{equation}

 \noindent\textbf{Type I\!I nodal domains.~}
By Lemma \ref{prop:DFV} and inequality \eqref{eq:lambda1}, we obtain the following inequality for $N_{I\!I}$.  
\begin{eqnarray*}
(\mu_k+o(\mu_k))^{n/2}\delta&\ge& (\mu_k+o(\mu_k))^{n/2}|\Omega_{\delta}|\\
  &\ge&\sum_{j\in I\!I}\lambda_1(D_j)^{n/2}|D_j\cap\Omega_{\delta}|
 \\
 &\ge& \vtheta_1\sum_{j\in \II}\lambda_1(D_j)^{n/2}|D_j|\\
 &\ge&\vtheta_1 c_2^{n/2} N_{\II};
\end{eqnarray*}
Therefore,
\begin{equation}\label{eq:tII}
    \frac{N_{I\!I}}{k}\le \frac{(\mu_k+o(\mu_k))^{n/2} \delta}{\vtheta_1 c_2^{n/2}k}.
\end{equation}
\noindent\textbf{Type I\!I\!I nodal domains.~} Again, by Lemma \ref{prop:DFV} and inequality \eqref{eq:lambda1}, we get 
\begin{equation*} 
  (\mu_k+o(\mu_k))^{n/2}|\Omega|
  \ge\sum_{j\in \III}\lambda_1(D_j)^{n/2}|D_j|
  \ge N_{\III} (1-\epsilon)^{n/2}\lambda_1^D(\bb)^{n/2}|\bb|.
\end{equation*}
Hence, 
\begin{equation}\label{eq:tIII}
    \frac{N_{\III}}{k}\le \frac{ (\mu_k+o(\mu_k))^{n/2}|\Omega|}{(1-\epsilon)^{n/2}\lambda_1^D(\bb)^{n/2}|\bb|k}.
\end{equation}
Taking the limit of \eqref{eq:tI}, \eqref{eq:tII}, and \eqref{eq:tIII} as $k\to\infty$ and using the  Weyl asymptotics for the Robin problem (see e.g. \cite{BS80,FG12}): 
\begin{equation}\label{weyl}
\lim_{k\to\infty}\frac{\mu_k^{\frac{n}{2}}|\Omega|}{k}=\frac{(2\pi)^n}{\omega_n},
\end{equation}
we get
\[
\limsup_{k\to\infty}\frac{\nodal_\Omega^h(k)}{k}=\frac{N_I+N_{\II}+N_{\III}}{k} \le
    \frac{\delta (2\pi)^n}{\vtheta_1 c_2^{n/2}\omega_n|\Omega|}+\frac{ \gamma(n)}{(1-\epsilon)^{n/2}}.\]
Note that $\omega_n j^n_{\frac{n-2}{2}}=\lambda_1^D(\bb)^{n/2}{|\bb|}$. 
We conclude by first sending $\delta\to0$ and then sending $\epsilon \to 0$. Note that $\vtheta_1$ is independent of~$\delta$.
\end{proof}

We now prove an improved version of Pleijel's theorem. See \cite{S14, B15} for improvements for the Dirichlet problem and \cite{HS} for an improvement for the Robin problem on $C^{1,1}$ domains.
\begin{thm}[Pleijel's theorem - Improved]\label{thm:pleijel improved} There exists a constant $\varepsilon=\varepsilon(n)>0$ depending only on the dimension,  such that for any bounded Lipschitz domain $\Omega\subset\R^n$ we have 
\[\limsup_{k\to\infty}\frac{\nodal_\Omega^h(k)}{k}\le \gamma(n)-\varepsilon.\]  
\end{thm}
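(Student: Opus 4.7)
The plan is to refine the Type $\III$ analysis from Theorem \ref{thm:robinplej} using a quantitative form of the Faber-Krahn inequality for the mixed Dirichlet-Neumann problem (Proposition \ref{prop:fkq}, proved in the Appendix). Types $\I$ and $\II$ nodal domains will still contribute $o(1)$ to $\nodal_\Omega^h(k)/k$ as in the previous proof, so the improvement must come entirely from Type $\III$.

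Fix a small parameter $\eta > 0$. Using Proposition \ref{prop:fkq}, I would partition Type $\III$ into $\III_a$, consisting of those $D_j$ with $\lambda_1(D_j)|D_j|^{2/n} \ge (1+\eta)\,\lambda_1^D(\bb)|\bb|^{2/n}$, and its complement $\III_b$, for which the quantitative Faber-Krahn ensures that the Fraenkel asymmetry of $D_j$ is at most some $\sigma(\eta) \to 0$ as $\eta \to 0$. Re-running the Type $\III$ argument of Theorem \ref{thm:robinplej} with the sharper lower bound on $\III_a$ and the standard one on $\III_b$ yields, via Weyl's law,
\begin{equation*}
(1+\eta)^{n/2}\,\limsup_{k\to\infty}\frac{N_{\III_a}}{k} + \limsup_{k\to\infty}\frac{N_{\III_b}}{k} \le \gamma(n).
\end{equation*}

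To close the argument, a separate upper bound on $N_{\III_b}/k$ strictly below $\gamma(n)$ is required. The key observation is that $\lambda_1(D_j) \le \mu_k + o(\mu_k)$ (from Proposition \ref{prop:keyinequality}) together with near-equality in Faber-Krahn forces each $D_j$ in $\III_b$ to have volume close to $\omega_n r_k^n$ with $r_k := j_{\frac{n-2}{2}}/\sqrt{\mu_k} \to 0$. The disjoint $D_j$'s thus correspond to Euclidean balls $B_j$ of common small radius $r_k$ whose symmetric differences with $D_j$ are of order $\sigma(\eta)$. Since $r_k \to 0$, I would invoke the asymptotic packing density $\theta_n < 1$ of $\R^n$ by congruent balls to conclude $\limsup_{k\to\infty} N_{\III_b}/k \le \theta_n\,\gamma(n)(1+o_\eta(1))$. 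Combining this with the preceding inequality by elementary optimization produces
\begin{equation*}
\limsup_{k\to\infty}\frac{\nodal_\Omega^h(k)}{k} \le \gamma(n)\!\left[\theta_n + \frac{1-\theta_n}{(1+\eta)^{n/2}}\right] + o_\eta(1),
\end{equation*}
whose right-hand side is strictly less than $\gamma(n)$ for every $\eta > 0$. Fixing any sufficiently small $\eta$ depending only on $n$ then delivers the uniform improvement $\varepsilon(n) > 0$.

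The hardest part will be the packing step on $\III_b$: the Fraenkel asymmetry controls only integral closeness to a ball, so the $D_j$'s need not contain large genuine balls. The argument must instead exploit near-disjointness of the approximating balls $B_j$, combined with the vanishing radius $r_k \to 0$, so that boundary effects of $\Omega$ become negligible and the bulk packing density $\theta_n$ governs the asymptotic count. A secondary technical point is verifying that the quantitative Faber-Krahn for the mixed problem (Proposition \ref{prop:fkq}) applies uniformly to domains satisfying the Type $\III$ conditions (small volume and small proportion in $\Omega_\delta$), which is precisely what motivates proving the Appendix result in that form.
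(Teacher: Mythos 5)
Your overall strategy coincides with the paper's: gain on nodal domains of non-negligible asymmetry via the quantitative Faber--Krahn inequality (Proposition \ref{prop:fkq}), and show by a packing argument that the nearly spherical domains cannot have full density. The genuine gap is in your packing step, and it traces back to your classification. In your decomposition the only volume restriction on a type-\III{} domain is $|D_j|\le\vtheta_0$, a constant independent of $k$. Near-equality in Faber--Krahn (membership in $\III_b$) together with $\lambda_1(D_j)\le(1+o(1))\mu_k$ from \eqref{eq:lambda1} yields only the \emph{lower} bound $|D_j|\ge(1-o(1))|B_k|$, where $\lambda_1^D(B_k)=\mu_k$; it does not force $|D_j|$ to be close to $\omega_n r_k^n$, because $\lambda_1(D_j)$ may be much smaller than $\mu_k$ (Proposition \ref{prop:keyinequality} gives no lower bound on the Rayleigh quotient). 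Hence the approximating balls need not have common radius $r_k$: their radii can range from $\sim r_k$ up to $\sim\vtheta_0^{1/n}$, a ratio which is unbounded as $k\to\infty$. For balls of widely varying radii the congruent-ball packing density $\theta_n<1$ is not available (such packings can have density arbitrarily close to $1$), and your small-overlap estimate, obtained by summing symmetric differences, only controls the \emph{relative} overlap when the two balls have comparable volumes. So the claimed bound $\limsup_k N_{\III_b}/k\le\theta_n\,\gamma(n)(1+o_\eta(1))$ does not follow as written.

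The repair is precisely the extra ingredient in the paper's proof: classify volumes against $|B_k|$ rather than against $\vtheta_0$. Domains with $|D_j|>(1+\epsilon_1)|B_k|$ form a separate class that gains a factor $(1+\epsilon_1)^{-1}$ by pure volume counting (the paper's type \I), and the packing argument is then run only on the remaining small-asymmetry domains, whose volumes are pinched between $(1-o(1))|B_k|$ and $(1+\epsilon_1)|B_k|$; after shrinking by factors controlled by $\epsilon_1$ and $\epsilon_2$ these become a genuine packing by congruent balls (the paper's Lemma \ref{lem:spherepacking}). You should also, as in Lemma \ref{lem:modFrankel}, convert the modified asymmetry $\tilde A(D_j)$ into control of $A(D_j\cap\Omega_\delta^c)$ before packing, since Proposition \ref{prop:fkq} only controls the modified quantity. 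Two smaller remarks: your displayed inequality bounding the \emph{sum of two limsups} by $\gamma(n)$ does not follow from the per-$k$ estimates (keep the volume fractions $|\Omega_{\III_a}|/|\Omega|$ and $|\Omega_{\III_b}|/|\Omega|$ inside a single limsup, as the paper does); and the ``secondary technical point'' you flag is actually harmless for your class $\III_b$, since the hypothesis $\lambda_1(D)|D|^{2/n}\le C_0$ of Proposition \ref{prop:fkq} holds there automatically from the defining inequality, whereas in the paper it is ensured by the volume cap $|D_j|\le(1+\epsilon_1)|B_k|$ together with Weyl's law.
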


We discuss the strategy of the proof as the details are rather intricate. To improve the upper bound $\gamma(n)$ in Theorem \ref{thm:robinplej}, we first need to prove a quantitative Faber-Krahn inequality for the mixed Dirichlet-Neumann eigenvalue for nodal domains with `small' volume. The results in \cite{DFV} do not yield a quantitative version. We prove this important component of the proof in the Appendix. Next, we categorize the nodal domains into disjoint families and use a sphere-packing argument to show that the family of nodal domains satisfying a quantitative version of the Faber-Krahn inequality constitutes a nontrivial proportion of the total volume. This allows the $\varepsilon$ improvement.

We first state a quantitative version of the Faber-Krahn inequality for which we introduce a slightly modified version of the Fraenkel asymmetry.

\begin{prop}\label{prop:fkq} For any $\epsilon\in (0,1)$, $\delta >0$ sufficiently small, and any $C_0<\infty$, there {exists} a neighbourhood $\Omega_{\delta}$ of $\partial\Omega$ with $|\Omega_{\delta}|<\delta$, a positive constant $\vtheta_0=\vtheta_0(\Omega, \epsilon,\delta)$, and another positive constant {$\vtheta_1=\vtheta_1(\Omega,n,\epsilon, C_0)$},
such that for any open set $D\subset \bOm$ satisfying 
\[|D|\le \vtheta_0,\qquad \frac{|D\cap\Omega_\delta|}{|D|}\le \vtheta_1,\quad \lambda_1(D)|D|^{2/n}\le C_0,\] 
we have 
\[\lambda_1(D)\ge (1-\epsilon+C\tilde A(D)^4)\lambda_1^D(D^*).\]
Here $C$ is a constant depending only on $n$, $D^*$ is a ball with the same volume as $D$ and $\tilde A(D)=\inf_U A(D\cap U)$, where $A$ is the Fraenkel asymmetry and the infimum is taken over all $U\subseteq\Omega$ containing $\Omega_\delta^c := \Omega\setminus\Omega_\delta$.
\end{prop}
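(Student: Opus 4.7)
The proof combines the quantitative Dirichlet Faber-Krahn inequality of Brasco, De Philippis, and Velichkov, which asserts that
\[
\lambda_1^D(E) \ge \bigl(1 + c_n A(E)^4\bigr)\,\lambda_1^D(E^*)
\]
for every bounded open set $E\subset\R^n$, with the localization techniques underlying Lemma~\ref{prop:DFV}(b) of \cite{DFV}. The plan is to transfer the quantitative Dirichlet inequality to the mixed problem by producing, from the first mixed eigenfunction on $D$, a Dirichlet-admissible test function supported in a subdomain $\tilde D\subset D$ compactly contained in $\Omega$, such that (i) its Rayleigh quotient is close to $\lambda_1(D)$, (ii) $|\tilde D|$ differs from $|D|$ by at most $\vtheta_1|D|$, and (iii) $A(\tilde D)\ge \tilde A(D)$.

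First I would select an open set $U\supseteq\Omega_\delta^c$ nearly realizing the infimum defining $\tilde A(D)$, together with a smooth cutoff $\chi:\Omega\to[0,1]$ equal to $1$ on $\Omega_\delta^c$, vanishing in a thin neighborhood of $\partial\Omega$, with $|\nabla\chi|\lesssim\delta^{-1}$, and chosen so that $\tilde D := D\cap\operatorname{int}(\operatorname{supp}\chi)$ takes the form $D\cap U'$ for some $U'\supseteq\Omega_\delta^c$. Setting $g=\chi f$, where $f$ is the first mixed eigenfunction on $D$ normalized by $\int_D f^2=1$, one checks that $g\in H^1_0(\tilde D)$. A direct Cauchy-Schwarz expansion with parameter $\eta>0$ yields
\[
\int_{\tilde D}|\nabla g|^2 \le (1+\eta)\int_D|\nabla f|^2 + C(\eta)\,\delta^{-2}\int_{D\cap\Omega_\delta}f^2,
\qquad
\int_{\tilde D} g^2 \ge \int_D f^2 - \int_{D\cap\Omega_\delta}f^2,
\]
where the $L^2$-mass of $f$ in $D\cap\Omega_\delta$ is controlled via the scale-invariant $L^\infty$ estimate $\|f\|_{L^\infty(D)}^2\le C(C_0)|D|^{-1}$ for first mixed eigenfunctions (obtained from Moser iteration together with $\lambda_1(D)|D|^{2/n}\le C_0$) and the smallness condition $|D\cap\Omega_\delta|\le\vtheta_1|D|$.

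The next step is to apply the quantitative Dirichlet Faber-Krahn inequality on $\tilde D$ and use $A(\tilde D)\ge \tilde A(D)$, which holds by the choice of $U$ and monotonicity of the infimum, to obtain
\[
\frac{\int_{\tilde D}|\nabla g|^2}{\int_{\tilde D}g^2}\ge \lambda_1^D(\tilde D)\ge \bigl(1 + c_n\tilde A(D)^4\bigr)\lambda_1^D(\tilde D^*).
\]
The volume bound $|\tilde D|\ge (1-\vtheta_1)|D|$ combined with Faber-Krahn scaling gives $\lambda_1^D(\tilde D^*)\ge (1-C\vtheta_1)\lambda_1^D(D^*)$. Substituting these estimates into the Rayleigh quotient bound and rearranging produces
\[
\lambda_1(D)\ge \bigl(1-\epsilon + C\tilde A(D)^4\bigr)\lambda_1^D(D^*),
\]
provided $\vtheta_1$ is first fixed small in terms of $\epsilon$ and $C_0$, and then $\vtheta_0$ is fixed small in terms of $\vtheta_1$ and $\delta$.

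The main obstacle is ensuring that the cutoff error $\delta^{-2}\int_{D\cap\Omega_\delta}f^2$ is negligible compared to $\lambda_1(D)\int_D f^2$, \emph{uniformly} over all admissible $D$. This is where the hypothesis $\lambda_1(D)|D|^{2/n}\le C_0$ becomes essential: it both yields the scale-invariant $L^\infty$ estimate on $f$ and, combined with the lower bound $\lambda_1(D)\gtrsim |D|^{-2/n}\ge \vtheta_0^{-2/n}$ inherited from Lemma~\ref{prop:DFV}(b), allows the error to be absorbed by choosing $\vtheta_0$ sufficiently small depending on $\delta$. A further technical subtlety is arranging that $\tilde D$ is genuinely of the form $D\cap U'$ for some $U'\supseteq\Omega_\delta^c$, which dictates the precise shape of $\chi$ in relation to the geometry of $\Omega_\delta$ and is what makes the asymmetry comparison $A(\tilde D)\ge \tilde A(D)$ valid.
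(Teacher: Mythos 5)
Your proposal is a genuinely different (and viable) route from the paper's. The paper never cuts the eigenfunction off from the boundary: it proves a quantitative \emph{relative} isoperimetric inequality for small subsets of $\Omega$ (slicing with the distance function at a good level, then invoking Fusco--Maggi--Pratelli), feeds this into the P\'olya--Szeg\"o-type inequality of De Ponti--Farinelli--Violo on the PI space $\bOm$, splits $f$ at a level $\tilde t$ so that the top part has small support (controlled by the DFV spectral bound, which is how the hypothesis $\lambda_1(D)|D|^{2/n}\le C_0$ enters there), and then runs the Fusco--Maggi--Pratelli stability scheme level-set by level-set, ending with a measure-theoretic selection of a good level $t$. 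You instead localize the \emph{eigenfunction}: multiply $f$ by a cutoff $\chi$ that is $1$ on $\Omega_\delta^c$ and supported in a slightly larger collar-complement $U'=\{d(\cdot,\partial\Omega)>t_0/2\}$, so that $g=\chi f\in H^1_0(D\cap U')$, and then apply the Euclidean quantitative Faber--Krahn inequality directly to $\tilde D=D\cap U'$; the comparison $A(\tilde D)\ge\tilde A(D)$ is immediate from the definition of the infimum (you do not even need a near-minimizing $U$), $\lambda_1^D(\tilde D^*)\ge\lambda_1^D(D^*)$ by monotonicity of volumes, and the arithmetic closes exactly as you indicate, with $\eta$ and $\vtheta_1$ chosen small in terms of $\epsilon,n,C_0,\Omega$ and then $\vtheta_0$ small in terms of $\delta$ (through the collar width $t_0(\Omega,\delta)$ -- note the cutoff gradient is of size $t_0^{-1}$, not $\delta^{-1}$, since $\delta$ bounds the \emph{volume} of $\Omega_\delta$), using $\lambda_1^D(D^*)\gtrsim |D|^{-2/n}\ge\vtheta_0^{-2/n}$ to absorb the additive cutoff error. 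Crucially, your $\vtheta_1$ stays independent of $\delta$, which is what the application to the improved Pleijel theorem requires; the only bookkeeping deviation is that your $\vtheta_0$ also acquires dependence on $C_0$ and $n$, slightly weaker than the stated $\vtheta_0(\Omega,\epsilon,\delta)$, but harmless in the paper's application where $C_0$ is an absolute constant fixed before $\delta\to0$.

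The one load-bearing ingredient you assert rather than prove is the scale-invariant bound $\|f\|_{L^\infty(D)}^2\le C(\Omega,C_0)\,|D|^{-1}\|f\|_{L^2(D)}^2$ for the first eigenfunction of the mixed Dirichlet--Neumann problem on an \emph{arbitrary} open $D\subset\bOm$. This is true, but it is not an off-the-shelf citation in this generality: you need (i) that the extension of $f\ge0$ by zero across the rough Dirichlet portion $\partial D\cap\Omega$ is a weak subsolution of $\Delta u\le\lambda_1(D)u$ with respect to the Neumann form on $\Omega$, and (ii) local boundedness up to the Lipschitz boundary (e.g.\ by a bi-Lipschitz flattening and even reflection, or by De Giorgi--Nash--Moser in the PI space $\bOm$), with constants depending only on the Lipschitz character of $\Omega$ -- which is admissible since $\vtheta_0,\vtheta_1$ may depend on $\Omega$. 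Writing this out is genuine work and is precisely what the paper's splitting of $f$ at the level $\tilde t$ (together with the DFV small-support estimate) is designed to avoid; so your route trades the paper's isoperimetric/P\'olya--Szeg\"o/level-set machinery for a shorter reduction to the sharp Euclidean stability result plus a nontrivial uniform $L^\infty$ estimate that must be supplied. A minor point: the lower bound $\lambda_1(D)\gtrsim|D|^{-2/n}$ you attribute to Lemma \ref{prop:DFV}(b) is part (a) of that lemma, and in fact you only need the exact scaling of $\lambda_1^D(D^*)$, not a bound on $\lambda_1(D)$ itself.
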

We refer to the quantity $\tilde A(D)$ as a {\it modified Fraenkel asymmetry of $D$}. 
The proof of Proposition \ref{prop:fkq} is deferred to the Appendix. 
\begin{proof}[Proof of Theorem \ref{thm:pleijel improved}]
 Let $\{\mu_k\}$ be the Robin eigenvalues with a corresponding basis $\{u_k\}$ of Robin eigenfunctions. Abusing notation, we let $C$ refer to any positive constant, depending only on $n$ unless otherwise specified. Throughout, for $k$ large enough so that $\mu_k>0$, we let $B_k$ be a ball in $\mathbb R^n$, chosen so that its first Dirichlet eigenvalue is $\mu_k$:
\[\lambda_1^D(B_k)=\mu_k.\]

\begin{lemma}\label{prop:prep}
    There exist $k_0=k_0(\Omega)\in\mathbb N$ and $C>0$ for which, for all $k\ge k_0$,
\begin{equation}\label{eq:volbound}|B_k|\le C\frac{|\Omega|}k.\end{equation}
\end{lemma}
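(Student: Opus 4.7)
The plan is to compute $|B_k|$ in closed form via the scaling property of the Dirichlet eigenvalue on balls, and then apply the Weyl asymptotics \eqref{weyl} to bound $\mu_k^{-n/2}$ from above in terms of $|\Omega|/k$.

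First, I would set up the explicit formula for $|B_k|$. By scaling, if $r_k$ denotes the radius of $B_k$, then
\[
\lambda_1^D(B_k) = \frac{\lambda_1^D(\bb)}{r_k^2} = \frac{j_{(n-2)/2}^2}{r_k^2},
\]
so the defining relation $\lambda_1^D(B_k) = \mu_k$ yields $r_k = j_{(n-2)/2}/\sqrt{\mu_k}$ and therefore
\[
|B_k| = \omega_n r_k^n = \frac{\omega_n \, j_{(n-2)/2}^{n}}{\mu_k^{n/2}}.
\]

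Next, I would invoke Weyl's law \eqref{weyl}: since $\mu_k^{n/2}|\Omega|/k \to (2\pi)^n/\omega_n$ as $k \to \infty$, there exists $k_0 = k_0(\Omega) \in \mathbb{N}$ such that for all $k \ge k_0$,
\[
\mu_k^{n/2} \ge \frac{1}{2}\cdot \frac{(2\pi)^n}{\omega_n}\cdot \frac{k}{|\Omega|}.
\]
Substituting this into the formula for $|B_k|$ gives
\[
|B_k| \le \frac{2\,\omega_n^2\, j_{(n-2)/2}^{n}}{(2\pi)^n}\cdot \frac{|\Omega|}{k} = \frac{2}{\gamma(n)}\cdot \frac{|\Omega|}{k},
\]
by the definition of $\gamma(n)$. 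Setting $C := 2/\gamma(n)$, which depends only on $n$, yields \eqref{eq:volbound} and completes the proof.

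There is no real obstacle here: the statement is essentially a direct consequence of Weyl's law combined with the dimensional scaling of the Dirichlet ground state on balls. The only subtlety is to ensure that the constant $C$ depends only on $n$ (and not on $\Omega$), which is automatic once one uses Weyl's law to extract the single factor of $|\Omega|$ on the right.
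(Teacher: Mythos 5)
Your proof is correct and follows essentially the same route as the paper: use the scaling identity $\mu_k=\lambda_1^D(B_k)=\lambda_1^D(\bb)\,(|\bb|/|B_k|)^{2/n}$ to express $|B_k|$ in terms of $\mu_k^{-n/2}$, then apply Weyl's law \eqref{weyl} to bound $\mu_k$ from below for $k\ge k_0(\Omega)$. The only difference is cosmetic: you track the constant explicitly as $C=2/\gamma(n)$, which is a nice touch but not needed for the lemma.
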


\begin{proof} By \eqref{weyl}, there exists $k_0$ depending on $\Omega$ such that, for $k \geq k_0$,
\begin{equation}\label{eq:aux1}
    \mu_k\ge C\left(\frac{k}{|\Omega|}\right)^{\frac{2}{n}},
\end{equation}
where $C=((2\pi)^n/(2\omega_n))^{n/2}$ and in particular depends only on $n$. However, by scaling,
\[\mu_k=\lambda^D_1(B_k)=\left(\frac{|\bb|}{|B_k|}\right)^{\frac 2n}\lambda^D_1(\bb) = C|B_k|^{-\frac 2n}.\]
Plugging this equation into \eqref{eq:aux1} gives the desired conclusion.
\end{proof}

Now we fix $\epsilon\in (0,1)$ and $\delta >0$ sufficiently small. (This ``sufficiently small" may depend only on the measure of $\Omega$ and on $n$. See the Appendix \pageref{page}.) We supplement our fixed $\epsilon$ and $\delta$ by fixing a set $\Omega_{\delta}$ and constants $\vtheta_0$ and $\vtheta_1$ which satisfy Proposition \ref{prop:fkq} {and Lemma~\ref{prop:DFV}}. We may also choose $\vtheta_1$ so that $\vtheta_1\le\frac 12$ and $\vtheta_1\le\epsilon_2$, where $\epsilon_2$ is defined below (note $\epsilon_2$ does not depend on $\vtheta_1$, so that the argument is not circular).

Finally, we fix $\epsilon_1,\epsilon_2\in(0,1)$ sufficiently small and independent of both $\epsilon$ and $\delta$ -- see below. For each $k$ satisfying
\begin{equation}\label{eq:kcdn}
k\ge\max\left\{k_0,\frac{2}{\vtheta_0}C|\Omega|\right\},
\end{equation}
with $C$ as in Lemma \ref{prop:prep},
we categorise the nodal domains $D_j$ of the eigenfunction $u_k$ into four disjoint sets (type-I, type-\II, type-\III, and type-\IV) as follows.
\begin{itemize}
    \item[\I.] $|D_j|>(1+\epsilon_1)|B_k| $;
    \item[\II.]$|D_j|\le (1+\epsilon_1)|B_k| $ and ${|D_j\cap \Omega_{\delta}|}> \vtheta_1 {|D_j|}$;
    \item[\III.]$|D_j|\le(1+\epsilon_1)|B_k| $, ${|D_j\cap \Omega_{\delta}|}\le \vtheta_1 {|D_j|}$, and  $\tilde A(D_j)\le \epsilon_2$;
    \item[\IV.] $|D_j|\le (1+\epsilon_1)|B_k| $, ${|D_j\cap \Omega_{\delta}|}\le \vtheta_1 {|D_j|}$, and  $\tilde A(D_j)> \epsilon_2$.
\end{itemize}
 Let $N_\I$, $N_{\II}$, $N_{\III}$, and $N_{\IV}$ denote the number of nodal domains in each family. Note that $\nodal_\Omega^h(k)= N_\I + N_{\II} + N_{\III} +N_{\IV}$. We denote by $\Omega_{\sharp}$ the union of all nodal domains of type $\sharp$. Note also that our condition \eqref{eq:kcdn} on $k$ guarantees that 
 \[|B_k|\le\min\left\{C\frac{|\Omega|}{k},\frac{\vtheta_0}{2}\right\}.\]

 \begin{rem}
 We can now be specific about our choice of $\epsilon_1$ and $\epsilon_2$. They are chosen independent of (sufficiently small) $\epsilon$, and in a way such that type-{\III} domains can only pack a $\tilde\rho(n)<1$ fraction of $\Omega_{\delta}^c$; we show in our proof of Lemma \ref{lem:spherepacking} below that this is possible.
 \end{rem}

 Our first step is to show that type-{\III} domains do not have density 1 -- that is, that the fraction of nodal domains which are \emph{not} type-{\III} is bounded away from zero. This is a sphere-packing argument.

\begin{lemma}\label{lem:spherepacking}
    There exist $k_1=k_1(\delta,\epsilon,\epsilon_1,\epsilon_2,\bOm)\in\mathbb N$ and $\tilde\rho=\tilde\rho(n) < 1$ such that for all $k\ge k_1$,
    \[\frac{|\Omega_{\III}|}{|\Omega|} \le\tilde\rho.\]
\end{lemma}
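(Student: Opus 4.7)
The strategy is a sphere-packing argument. The Faber-Krahn inequality forces each type-\III\ nodal domain $D_j$ to have volume close to $|B_k|$, while the modified Fraenkel asymmetry condition forces $D_j$ to lie close to a genuine ball of that volume. Since the $D_j$ are pairwise disjoint, the approximating balls must be essentially disjoint, and the asymptotic sphere packing density $\rho(n)<1$ of congruent balls in $\R^n$, $n\ge 2$, limits how much total volume such a family can occupy.

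First, combining the Rayleigh quotient bound \eqref{eq:lambda1} with the Faber-Krahn inequality in Lemma~\ref{prop:DFV}(b) and the scaling identity $\lambda_1^D(B_k)|B_k|^{2/n}=\lambda_1^D(\bb)|\bb|^{2/n}$, I would obtain, for $k$ large and every $j\in\III$,
\[(1-\epsilon)^{n/2}\bigl(1-o_k(1)\bigr)|B_k|\ \le\ |D_j|\ \le\ (1+\epsilon_1)|B_k|.\]
Using $\tilde A(D_j)\le\epsilon_2$, for each $j\in\III$ I would select a set $U_j\supseteq\Omega_\delta^c$ and a ball $B_j^*$ with $|B_j^*|=|D_j\cap U_j|$ and $|(D_j\cap U_j)\triangle B_j^*|\le 3\epsilon_2|D_j\cap U_j|$. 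Since $|D_j\setminus U_j|\le|D_j\cap\Omega_\delta|\le\vtheta_1|D_j|$, each $|B_j^*|$ lies in a narrow band around $|B_k|$, and $|B_j^*\setminus D_j|\le 3\epsilon_2|D_j|$, so each $B_j^*$ is mostly contained in $D_j\subseteq\Omega$. The disjointness of the $D_j$ then forces approximate disjointness of the balls: for $i\ne j$,
\[|B_i^*\cap B_j^*|\ \le\ |B_i^*\setminus D_i|+|B_j^*\setminus D_j|\ \le\ C\epsilon_2|B_k|.\]
A standard two-ball lens-volume computation turns this into the centre-separation bound $|x_i-x_j|\ge\bigl(1-C_n\epsilon_2^{2/(n+1)}\bigr)(r_i+r_j)$, where $x_j$, $r_j$ denote the centre and radius of $B_j^*$. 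Shrinking each $B_j^*$ concentrically by a factor $1-\alpha$, with $\alpha=\alpha(n,\epsilon_2)\to 0$ as $\epsilon_2\to 0$, produces a pairwise disjoint family $\tilde B_j$ with $|\tilde B_j|\ge(1-n\alpha)|B_j^*|$, all contained in the $r_k$-neighbourhood of $\bOm$.

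Next I would invoke the asymptotic sphere packing density bound in $\R^n$: for any pairwise disjoint family of balls with radii in a bounded ratio of $r$ contained in a bounded region $V$, the total volume is at most $\bigl(\rho(n)+o(1)\bigr)|V|$ as $r/|V|^{1/n}\to 0$, with $\rho(n)<1$. By Lemma~\ref{prop:prep}, $r_k\to 0$ as $k\to\infty$, and all the $\tilde B_j$ have radius within a fixed ratio of $r_k$, so this gives
\[\sum_{j\in\III}|D_j|\ \le\ (1+C\vtheta_1)(1-n\alpha)^{-1}\bigl(\rho(n)+o_k(1)\bigr)|\Omega|.\]
Choosing $\epsilon_1$, $\epsilon_2$, $\vtheta_1$ sufficiently small (and independent of $\epsilon$ and $\delta$) and then $k$ large enough yields the conclusion with some dimensional $\tilde\rho(n)<1$.

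The main obstacle is the sphere packing step: promoting the almost-disjoint, almost-equal-radius family $\{B_j^*\}$ into a genuine equal-radius packing with multiplicative losses depending only on $n$, and then applying the asymptotic density bound $\rho(n)<1$ carefully despite (i) a narrow band of radii rather than exact equality and (ii) the balls protruding slightly from $\Omega$. Both corrections are $o_k(1)$ as $r_k\to 0$ and are absorbed without disturbing the dimensional constant $\tilde\rho(n)$.
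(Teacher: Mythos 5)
Your proposal is correct and follows essentially the same route as the paper's proof: two-sided volume control for type-\III{} domains from Lemma~\ref{prop:DFV}(b) combined with \eqref{eq:lambda1}, approximating balls obtained from the modified Fraenkel asymmetry bound (the paper routes this through Lemma~\ref{lem:modFrankel} and $D_j\cap\Omega_\delta^c$, you use the near-optimal $U_j$ directly, which is equivalent), comparable volumes and small pairwise overlap, a concentric shrinking to produce a genuine packing, and the sphere-packing density $\rho(n)<1$ with all losses absorbed by taking $\epsilon_1,\epsilon_2$ small and $k$ large. One small caveat: the packing bound you invoke should be stated for (nearly) congruent balls --- a fixed bounded radius ratio alone does not force density at most $\rho(n)+o(1)$ --- but since your radii agree up to a factor $1+C\epsilon_1$ you can first shrink all balls to the common smallest radius, which is exactly how the paper handles it.
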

To prove this lemma, we begin with an auxiliary result.
\begin{lemma}\label{lem:modFrankel}
    Suppose that $D$ is a type-{\III} nodal domain. Then
    \begin{equation}\label{eq:set}
        A(D\cap\Omega_{\delta}^c) \le \frac{\epsilon_2}{1-\vtheta_1} + \frac{2\vtheta_1}{1-\vtheta_1}\le 6\epsilon_2.
    \end{equation}
\end{lemma}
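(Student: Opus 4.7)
The plan is to directly compare the Fraenkel asymmetry of $D\cap\Omega_\delta^c$ to the modified Fraenkel asymmetry $\tilde A(D)$ by exploiting that the ``extra set'' used in the definition of $\tilde A$ has controlled measure. Since $\tilde A(D)\le \epsilon_2$, for every $\eta>0$ I can pick some admissible $U\supseteq\Omega_\delta^c$ with $A(D\cap U)\le \epsilon_2+\eta$, and then pick a ball $B$ with $|B|=|D\cap U|$ realizing $|(D\cap U)\triangle B|\le (\epsilon_2+\eta)|D\cap U|$. I will set $E:=D\cap U$ and $E':=D\cap\Omega_\delta^c$, so that $E'\subseteq E$ with $E\setminus E'\subseteq D\cap\Omega_\delta$; hence $|E\setminus E'|\le \vtheta_1|D|$ and $|E'|\ge (1-\vtheta_1)|D|$.

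Next I take $B'$ concentric to $B$ with $|B'|=|E'|$; since $|E'|\le |E|=|B|$ we have $B'\subseteq B$, and $|B\setminus B'|=|B|-|B'|=|E|-|E'|=|E\setminus E'|$. Splitting the symmetric difference,
\begin{align*}
|E'\triangle B'|&\le |E'\setminus B'|+|B'\setminus E'|\\
&\le \bigl(|E\setminus B|+|B\setminus B'|\bigr)+\bigl(|B\setminus E|+|E\setminus E'|\bigr)\\
&= |E\triangle B|+2|E\setminus E'|\\
&\le (\epsilon_2+\eta)|E|+2\vtheta_1|D|.
\end{align*}
Dividing by $|E'|$ and using $|E|\le |D|\le |E'|/(1-\vtheta_1)$ yields
\[
A(E')\le \frac{|E'\triangle B'|}{|E'|}\le \frac{\epsilon_2+\eta}{1-\vtheta_1}+\frac{2\vtheta_1}{1-\vtheta_1}.
\]
Sending $\eta\to 0$ gives the first inequality of \eqref{eq:set}. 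The second inequality follows from $\vtheta_1\le \min\{1/2,\epsilon_2\}$ (fixed just above the lemma), which gives $\frac{1}{1-\vtheta_1}\le 2$ and hence $\frac{\epsilon_2}{1-\vtheta_1}\le 2\epsilon_2$ and $\frac{2\vtheta_1}{1-\vtheta_1}\le 4\vtheta_1\le 4\epsilon_2$.

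No single step is genuinely hard; the only subtlety is remembering that $\tilde A(D)$ is defined as an infimum (so a small perturbation parameter $\eta$ must be introduced and then sent to zero), and that the concentric shrinkage $B'\subseteq B$ is the right choice of ball so that $|B\setminus B'|$ is controlled by the already-small quantity $|E\setminus E'|\le \vtheta_1|D|$ rather than by the unrelated asymmetry of $E$. Both the triangle-inequality splitting of $E'\triangle B'$ and the final passage from $|E|$ to $|E'|$ via the factor $1/(1-\vtheta_1)$ are then routine.
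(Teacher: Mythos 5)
Your proof is correct and follows essentially the same argument as the paper's: take a near-optimal admissible $U$ and a ball realizing the asymmetry of $D\cap U$, shrink that ball to the volume of $D\cap\Omega_{\delta}^c$, split the symmetric difference by the triangle inequality, and divide, using $\vtheta_1\le\min\{1/2,\epsilon_2\}$ for the final bound $6\epsilon_2$. The only differences are cosmetic: the paper packages the estimate as a general claim about nested sets $T\subseteq S$ with $|T|\ge(1-\vtheta_1)|S|$, whereas you argue directly with $E=D\cap U$ and $E'=D\cap\Omega_{\delta}^c$ normalized by $|D|$ and treat the infimum in $\tilde A(D)$ carefully via the parameter $\eta$.
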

\begin{proof}
We claim a slightly more general result: if $S$ and $T$ are domains with $A(S)<\epsilon_2$, $T\subseteq S$, and $|T|/|S|\ge 1-\vtheta_1$,
then
\begin{equation}\label{eq:genversion}
    A(T)\le\frac{\epsilon_2}{1-\vtheta_1}+\frac{2\vtheta_1}{1-\vtheta_1}.
\end{equation}
Assuming this claim for the moment, we know because $D$ is a type-{\III} nodal domain that there exists a $U$ with $\Omega_{\delta}^c\subseteq U\subseteq \Omega$ for which $A(D\cap U)<\epsilon_2$. We then apply \eqref{eq:genversion} with $S=D\cap U$ and $T=D\cap\Omega_{\delta}^c$, immediately proving the first inequality in \eqref{eq:set}. The last inequality in \eqref{eq:set} follows immediately from the inequalities $\vtheta_1\le\frac 12$ and $\vtheta_1\le\epsilon_2$.

It remains to prove \eqref{eq:genversion}. By definition of the Fraenkel asymmetry, there is a ball $B$ such that $|B|=|S|$ and $|B\Delta S|<\epsilon_2|B|$. Let $\tilde B$ be a ball of volume $|T|$ with $\tilde B\subseteq B$. Then
\[|\tilde B\Delta T| = |\tilde B\setminus T| + |T\setminus\tilde B|\le |B\setminus T|+|S\setminus\tilde B|\le (|B\setminus S|+|S\setminus T|) + (|S\setminus B|+|B\setminus \tilde B|).\]
Now $|S\setminus T|\le \vtheta_1|S|$ and the same is true for $|B\setminus\tilde B|$. So
\[|\tilde B\Delta T|\le |B\Delta S| + 2\vtheta_1|S|.\]
Dividing by $|\tilde B|=|T|$ gives
\[A(T)\le\frac{|\tilde B\Delta T|}{|\tilde B|} \leq \frac{1}{|S|}\frac{|S|}{|T|}(|B\Delta S|+2\vtheta_1|S|),\]
and now \eqref{eq:genversion} follows from the assumptions on $S$ and $T$.
\end{proof}

\begin{proof}[Proof of Lemma \ref{lem:spherepacking}] Let $D$ be a type-{\III} nodal domain associated with $\mu_k$. By Lemma \ref{prop:DFV} part b),  we have
\[|D|^{2/n}\lambda_1(D)\ge(1-\epsilon)\lambda_1^D(\bb)|\bb|^{2/n} = (1-\epsilon)\lambda_1^D(B_k)|B_k|^{2/n}=(1-\epsilon)\mu_k|B_k|^{2/n}.\]
By inequality \eqref{eq:lambda1}, there exists $\tilde k_1$ depending only on $\Omega$ and $\epsilon$ for which $k\ge \tilde k_1$ implies
\[(1-\epsilon)\mu_k|B_k|^{2/n}\le \lambda_1(D)|D|^{2/n}\le(1+\epsilon)\mu_k|D|^{2/n}.\]
Therefore, also using the fact that $D$ is a type-{\III} nodal domain,
\begin{equation}\label{eq:uplowD}\left(\frac{1-\epsilon}{1+\epsilon}\right)^{n/2}|B_k|\le |D|\le (1+\epsilon_1)|B_k|.\end{equation}

Now, suppose that $D$ and $\tilde D$ are any two nodal domains associated to $\mu_k$. Let $B$ and $\tilde B$ be balls such that
\[|B|=|D\cap\Omega_{\delta}^c|,\quad |\tilde B|=|\tilde D\cap\Omega_{\delta}^c|.\]
By Lemma \ref{lem:modFrankel}, the centers of $B$ and $\tilde B$ can be chosen so that
\[|B\Delta(D\cap\Omega_{\delta}^c)|\le 6\epsilon_2|B|,\quad |\tilde B\Delta(\tilde D\cap\Omega_{\delta}^c)|\le 6\epsilon_2|\tilde B|.\]
We claim that $B$ and $\tilde B$ have comparable volume and small overlap.

To show that $B$ and $\tilde B$ have comparable volume, first observe that by the definition of a type-{\III} nodal domain, then using \eqref{eq:uplowD} for both $D$ and $\tilde D$, as well as the fact that $\vtheta_1\le\epsilon_2$,
\begin{equation}
\frac{|B|}{|\tilde B|}=\frac{|D\cap\Omega_\delta^c|}{|\tilde D\cap\Omega_{\delta}^c|}\le\left(\frac{1+\vtheta_1}{1-\vtheta_1}\right)\frac{|D|}{|\tilde D|}\le\left(\frac{1+\epsilon_2}{1-\epsilon_2}\right)(1+\epsilon_1)\left(\frac{1+\epsilon}{1-\epsilon}\right)^{n/2}.\end{equation}
We can assume $\epsilon\le\epsilon_2\le \epsilon_1\le \frac{1}{2}$. Thus,
\begin{equation}\label{eq:volumecomp}
\frac{|B|}{|\tilde B|}\le{(1+\epsilon_1)\left(1+\frac{2\epsilon_1}{1-\epsilon_1}\right)^{(n+2)/2}}\le 1+C\epsilon_1.
\end{equation} The same upper bound holds for $|\tilde B|/|B|$. 

To show that $B$ and $\tilde B$ have small overlap, observe that since $D$ and $\tilde D$ are disjoint,
\[|B\cap\tilde B|\le |B\Delta(D\cap\Omega_{\delta}^c)| + |\tilde B\Delta(\tilde D\cap\Omega_{\delta}^c)|\le 6\epsilon_2(|B|+|\tilde B|),\]
and so using \eqref{eq:volumecomp},
\begin{equation}\label{eq:smalloverlap}\frac{|B\cap\tilde B|}{|B|}\le 6\epsilon_2\left(1+\frac{|\tilde B|}{| B|}\right)\le 6\epsilon_2\left(2+C\epsilon_1\right)\le C\epsilon_2,\end{equation}
where again by abuse of notation, $C$ in the left-hand side of the last inequality is equal to $6(2+{C})$.

Now consider such a ball for \emph{each} type-{\III} nodal domain of $\mu_k$. Note that the volume of each of these balls is less than $\frac{C|\Omega|}{k}$. Let $\rho_\circ = \rho_\circ(n) < 1$ represent the sphere packing density of $\mathbb{R}^n$. The sphere packing density of $\Omega$ is bounded above by $\rho_\circ$. Therefore, there exists a threshold $\zeta>0$ for which the packing density of $\Omega$ with balls of any fixed size less than $\zeta$ is bounded above by $(\rho_\circ+1)/2$. We choose $k$ sufficiently large so that $\frac{C|\Omega|}{k}<\zeta$. By choosing $\epsilon_1$ to be sufficiently small, the balls will have almost the same radius. We then shrink the balls by a controlled factor, which depends only on $\epsilon_1$ and $n$, so that they all have the same radius.
Next, we select $\epsilon_2$ to be small enough so that their overlap is very small. By shrinking the balls again by a controlled factor, depending only on $\epsilon_2$, we can make them disjoint, thereby obtaining a sphere packing of $\Omega_{\delta}^c$ -- indeed,  a sphere packing of $\Omega$ itself.  Therefore,
\[(1-c(n,\epsilon_1,\epsilon_2))\sum_{j\in \III}|D_j\cap\Omega_{\delta}^c| <\frac{1+\rho_\circ}{2}|\Omega|,\]
where $c(n,\epsilon_1,\epsilon_2)$ is a positive function and goes to zero when $\epsilon_1$ and $\epsilon_2$ tend to zero. Hence, we can choose $\epsilon_1$ and $\epsilon_2$ small enough such that $\frac{(1+\rho_\circ)/2}{1-c(n,\epsilon_1,\epsilon_2)}<(1-\epsilon_2)\tilde \rho$, where $\tilde\rho$ is a fixed constant in the interval $(\rho,1)$.
Therefore since we have type-{\III} nodal domains,
\[|\Omega_{\III}|=\sum_{j\in \III}|D_j|\le \sum_{j\in \III}\frac{1}{1-\vtheta_1}|D_j\cap\Omega_{\delta}^c|\le\frac{1}{1-\epsilon_2}\sum_{j\in \III}|D_j\cap\Omega_{\delta}^c|\le\tilde\rho|\Omega|.\]
This completes the proof of Lemma \ref{lem:spherepacking}.\end{proof}

We now proceed to bound, for each $\sharp\in\{$\I, \II, \III, \IV$\}$, the quantity $\frac{N_{\sharp}}{k}$.
From adding the bounds we obtain and taking the limsup as $k\to\infty$, we will be able to read off our sharpened Pleijel theorem.\\

\textbf{Type-{\I} nodal domains.}
Suppose that $D_j$ is a nodal domain of type I. Then it is immediate from the definition of type {\I} that
\[\lambda_1^D(B_k)^{n/2}|B_k|(1+\epsilon_1)=\mu_k^{n/2}|B_k|(1+\epsilon_1)\le\mu_k^{n/2}|D_j|.\]
Summing this equation over all nodal domains of type I yields
\[N_{I}\lambda_1^D(B_k)^{n/2}|B_k|(1+\epsilon_1)\le\mu_{k}^{n/2}|\Omega_I|,\]
and thus
\begin{equation}\label{eq:countI}\frac{N_\I}{k}\le\frac{\mu_k^{n/2}|\Omega|}{\lambda_1^D(\bb)^{n/2}|\bb|k}\cdot\frac{|\Omega_I|}{|\Omega|(1+\epsilon_1)}.\end{equation}
{\ }\\

\textbf{Type-{\II} nodal domains.}We do the same treatment as for type-{\II} nodal domains in the proof of Theorem \ref{thm:robinplej} to obtain
\begin{equation}\label{eq:countII}
    \frac{N_{I\!I}}{k}\le \frac{(\mu_k+o(\mu_k))^{n/2} \delta}{\vtheta_1 c_2(n,\Omega)^{n/2}k},
\end{equation}
where $c_2(n,\Omega)$ is the constant in Lemma \eqref{prop:DFV} (a).
{\ }\\

\textbf{Type-{\III} nodal domains.} Type-{\III} nodal domains satisfy the assumption of Lemma \ref{prop:DFV} (b). Thus, we have 
\[\lambda_1(D_j)|D_j|^{2/n}\ge (1-\epsilon)\lambda_1^D(\bb) |\bb|^{2/n}.\]
By inequality  \eqref{eq:lambda1},
\begin{equation*}   
(\mu_k+o(\mu_k))^{n/2}|\Omega_{\III}| \ge \sum_{j\in \III}\lambda_1(D_j)^{n/2}|D_j|
\ge (1-\epsilon)^{n/2}\lambda_1^D(\bb)^{n/2}|\bb|N_{\III}.\end{equation*}
Solving for $N_{\III}$ and dividing by $k$, we see as in the proof for Type I nodal domains that
\begin{equation}\label{eq:countIII}
    \frac{N_{\III}}{k}\le\frac{(\mu_k+o(\mu_k))^{n/2}|\Omega|}{\lambda_1^D(\bb)^{n/2}|\bb|k}\cdot\frac{|\Omega_{\III}|}{(1-\epsilon)^{n/2}|\Omega|}.
\end{equation}
{\ }\\

\textbf{Type-{\IV} nodal domains.}
The analysis of type-{\IV} nodal domains is very similar to the analysis of type-{\III} nodal domains, but now we have a lower bound on $\tilde A(D_j)$. Moreover, by Lemma \ref{prop:prep} and inequality \eqref{eq:lambda1}, for $k\ge k_0$, using also the trivial bound $\epsilon_1\le 1$,
\[\lambda_1(D_j)|D_j|^{2/n}\le C\left(\frac{|\Omega|}{k}\right)^{2/n}(\mu_k + o(\mu_k)),\]
which by Weyl's law is bounded by $C$. Hence Proposition \ref{prop:fkq} applies taking $C_0=C$, and therefore 
\[\lambda_1(D_j)\ge (1-\epsilon+C\epsilon_2^4)\lambda_1^D(D_j^*).\]
Taking $\epsilon$ small enough such that $2\epsilon\le C\epsilon^4_2$ and applying the same logic as for type-{\III}, we obtain
\begin{equation}\label{eq:countIV}
    \frac{N_{\IV}}{k}\le\frac{(\mu_k+o(\mu_k))^{n/2}|\Omega|}{\lambda_1^D(\bb)^{n/2}|\bb|k}\cdot\frac{|\Omega_{IV}|}{(1+C\epsilon_2^4)^{n/2}|\Omega|}.
\end{equation}
Note that in \eqref{eq:countIV}, by abuse of notation $C=\frac{C}{2}$.  
{\ }\\

\textbf{Completing the proof.} Combining \eqref{eq:countI}, \eqref{eq:countII}, \eqref{eq:countIII}, and \eqref{eq:countIV} yields
\begin{multline}\label{eq:count}
\frac{{\nodal_\Omega^h(k)}}{k}\le\frac{\delta}{|\Omega|\vtheta_1 C}\frac{(1+o(1))\mu_k^{n/2}|\Omega|}{k} + \frac{(1+o(1))\mu_k^{n/2}|\Omega|}{k|\bb|\lambda_1^D(\bb)^{n/2}}\Big(\frac{1}{1+\epsilon_1}\frac{|\Omega_{\I}|}{|\Omega|} + \\ \frac{1}{(1-\epsilon)^{n/2}}\frac{|\Omega_{\III}|}{|\Omega|} + \frac{1}{(1+C\epsilon_2^4)^{n/2}}\frac{|\Omega_{\IV}|}{|\Omega|}\Big).
\end{multline}
Now take the limsup. Using Weyl's law yields
\begin{equation}\label{eq:limsup}
\limsup_{k\to\infty}\frac{{\nodal_\Omega^h(k)}}{k}\le\frac{C\delta}{\vtheta_1|\Omega|} + \gamma(n)\limsup_{k\to\infty}\Big(\frac{1}{1+\epsilon_1}\frac{|\Omega_{\I}|}{|\Omega|} + \frac{1}{(1-\epsilon)^{n/2}}\frac{|\Omega_{\III}|}{|\Omega|} + \frac{1}{(1+C\epsilon_2^4)^{n/2}}\frac{|\Omega_{\IV}|}{|\Omega|}\Big).
\end{equation}
 Pulling out the $(1-\epsilon)^{-n/2}$ and choosing $\epsilon_3>0$, again independent of $\epsilon$ and $\delta$, so that 
\[(1-\epsilon_3)\ge \max\left\{\frac{1}{1+\epsilon_1},\frac{1}{(1+C\epsilon_2^4)^{n/2}}\right\}\]
gives
\begin{equation}\label{eq:limsup2}
\limsup_{k\to\infty}\frac{{\nodal_\Omega^h(k)}}{k}\le\frac{C\delta}{\vtheta_1|\Omega|} + \frac{\gamma(n)}{(1-\epsilon)^{n/2}}\limsup_{k\to\infty}\Big((1-\epsilon_3)\frac{|\Omega_I|+|\Omega_{\IV}|}{|\Omega|} + \frac{|\Omega_{\III}|}{|\Omega|}\Big),
\end{equation}
which in turn means
\begin{equation}\label{eq:limsup3}
\limsup_{k\to\infty}\frac{{\nodal_\Omega^h(k)}}{k}\le\frac{C\delta}{\vtheta_1|\Omega|} + \frac{\gamma(n)}{(1-\epsilon)^{n/2}}\limsup_{k\to\infty}\left((1-\epsilon_3)\left(1-\frac{|\Omega_{\III}|}{|\Omega|}\right) + \frac{|\Omega_{\III}|}{|\Omega|}\right).
\end{equation}
By Lemma \ref{lem:spherepacking}, for $k\ge k_1$, $|\Omega_{\III}|/|\Omega|\le\tilde\rho$. Therefore
\begin{equation}\label{eq:limsup4}
\limsup_{k\to\infty}\frac{{\nodal_\Omega^h(k)}}{k}\le\frac{C\delta}{\vtheta_1|\Omega|} + \frac{\gamma(n)}{(1-\epsilon)^{n/2}}\Big((1-\epsilon_3)(1-\tilde\rho)+\tilde\rho\Big).
\end{equation}
Now let $\delta\to 0$, using the fact that $\vtheta_1$, $\epsilon_3$, and $\tilde\rho$ are independent of $\delta$, we see
\begin{equation*}\label{eq:limsup5}
\limsup_{k\to\infty}\frac{{\nodal_\Omega^h(k)}}{k}\le \frac{\gamma(n)}{(1-\epsilon)^{n/2}}\Big((1-\epsilon_3)(1-\tilde\rho)+\tilde\rho\Big).
\end{equation*}
Finally, we let $\epsilon\to 0$, using that $\epsilon_3$ and $\tilde\rho$ are independent of $\epsilon$, and we observe that $(1-\epsilon_3)(1-\tilde\rho)+\tilde\rho$ is strictly less than 1 {and depends only on $n$}. This completes the proof.
\end{proof}

\section{Geometric upper bounds for the number of Courant-sharp Robin eigenvalues}\label{sec:uppernumber}

The goal of this section is to obtain an upper bound for the number of Courant-sharp Robin eigenvalues of an open, bounded, connected, convex set $\Omega \subset \R^n$, $ n \geq 2$, with $C^2$ boundary, that is explicit in terms of the geometric quantities of $\Omega$ and the Robin parameter.

To do this, we first derive a comparison between the Robin and Neumann counting functions for open, bounded, connected, Lipschitz sets in $\R^n$. We then derive an upper bound for the largest positive Courant-sharp Robin eigenvalue of an open, bounded, connected set $\Omega \subset \R^n$ with $C^2$ boundary.
By combining these two results with the result from \cite[Appendix A]{GL18} which holds for convex sets, we obtain an upper bound for the number of Courant-sharp Robin eigenvalues of $\Omega$. We consider the case where $\Omega$ has $C^2$ boundary as this setting allows us to glean explicit geometric control in the geometric bounds that follow.

\subsection{Comparison between Neumann and Robin counting functions} 

In order to use previous work estimating the Neumann eigenvalues and the Neumann counting function, we prove, for an arbitrary Lipschitz domain, a comparison result between the Neumann and the Robin spectra. This can be stated either as a lower bound for the Robin eigenvalues or an upper bound for the Robin counting function.
For $\mu > 0$, we define the Robin counting function as
$$ N_\Omega^h(\mu) :=\sharp\{k\in\N\,:\,\mu_k(\Omega,h)<\mu\}.$$
We denote the Neumann counting function by $N_\Omega^N(\mu) = N_\Omega^0(\mu)$.

We recall that for any Lipschitz domain $\Omega$ with outward-pointing vector field $F$ and for all $u\in H^1(\Omega)$, we have
\begin{equation}\label{eqTrace}
    \int_{\partial \Omega}u^2\, ds \le \Gamma_1(\Omega,F)\,\int_{\Omega}u^2\,dx+\Gamma_2(\Omega,F)\left(\int_{\Omega}u^2\,dx\right)^{\frac12}\left(\int_{\Omega}\left|\nabla u\right|^2\,dx\right)^{\frac12},
\end{equation}
with $\Gamma_1(\Omega,F)$ and $\Gamma_2(\Omega,F)$ the constants defined in Proposition \ref{prop:keyinequality}.

This can be interpreted as a kind of trace inequality, and it is the starting point of our analysis.

\begin{prop}\label{propComparisonNR}
Let $\Omega$ be a Lipschitz domain and let $h \in L^{\infty}(\partial \Omega)$ be  a real-valued function defined on its boundary $\partial \Omega$. Let $F$ be an outward-pointing vector field relative to $\Omega$, and let $H:=\|h^-\|_{L^{\infty}(\partial \Omega)}$ where 
$h^-=\max\{0,-h\}$. Then, setting
\begin{align*}
    K_1(\Omega,F)&:=\Gamma_1(\Omega,F);\\
    K_2(\Omega,F)&:=\frac{\Gamma_2(\Omega,F)^2}{4};
\end{align*}
(with $\Gamma_1(\Omega,F)$, $\Gamma_2(\Omega,F)$ defined in \eqref{gam1}, \eqref{gam2} respectively), 
for any $\eta\in(0,1)$ and any $k\ge1$, we have
\begin{equation}\label{eqEVRN}
    \mu_k(\Omega,h)\ge (1-\eta)\mu_k^N(\Omega)-\left(K_1(\Omega,F)H+K_2(\Omega,F)\frac{H^2}{\eta}\right).
\end{equation}
Equivalently, we have
\begin{equation}\label{eqCountRN}
    N_{\Omega}^h(\mu)\le N_{\Omega}^N\left(\frac{1}{1-\eta}\left(\mu+K_1(\Omega,F)H+K_2(\Omega,F)\frac{H^2}{\eta}\right)\right)
\end{equation}
for all $\mu\in\mathbb R$.
\end{prop}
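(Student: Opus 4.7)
The plan is a standard perturbation argument at the level of quadratic forms, using the trace-type inequality \eqref{eqTrace} as the sole bridge between the Robin and Neumann worlds, followed by min--max.

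First, I would fix $u \in H^1(\Omega)$ and write the Robin quadratic form as
\[\mathcal Q^h(u) = \int_\Omega |\nabla u|^2\,dx + \int_{\partial\Omega} h u^2\,ds \ge \int_\Omega |\nabla u|^2\,dx - H\int_{\partial\Omega} u^2\,ds,\]
since $h \ge -H$ a.e.\ on $\partial\Omega$. Inserting the trace-type inequality \eqref{eqTrace} yields
\[\mathcal Q^h(u) \ge \int_\Omega |\nabla u|^2\,dx - H\,\Gamma_1(\Omega,F)\int_\Omega u^2\,dx - H\,\Gamma_2(\Omega,F)\,\|u\|_{L^2(\Omega)}\|\nabla u\|_{L^2(\Omega)}.\]

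Next, I would absorb the cross term via Young's inequality $ab \le \eta a^2 + b^2/(4\eta)$ with $a = \|\nabla u\|_{L^2(\Omega)}$ and $b = H\,\Gamma_2(\Omega,F)\|u\|_{L^2(\Omega)}$, which gives
\[H\,\Gamma_2(\Omega,F)\|u\|_{L^2}\|\nabla u\|_{L^2} \le \eta\|\nabla u\|_{L^2}^2 + \frac{H^2\Gamma_2(\Omega,F)^2}{4\eta}\|u\|_{L^2}^2.\]
Substituting and using the definitions $K_1(\Omega,F) = \Gamma_1(\Omega,F)$ and $K_2(\Omega,F) = \Gamma_2(\Omega,F)^2/4$, this yields the pointwise (in $u$) estimate
\[\mathcal Q^h(u) \ge (1-\eta)\int_\Omega |\nabla u|^2\,dx - \Bigl(K_1(\Omega,F)H + K_2(\Omega,F)\tfrac{H^2}{\eta}\Bigr)\int_\Omega u^2\,dx.\]

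Dividing by $\|u\|_{L^2(\Omega)}^2$ and invoking the Courant--Fischer min--max principle in $H^1(\Omega)$ — specifically, minimizing the maximum of both sides over $k$-dimensional subspaces, with the supremum on the right attained (up to a factor of $1-\eta$) at the first $k$ Neumann eigenfunctions — delivers \eqref{eqEVRN}. Finally, the counting-function bound \eqref{eqCountRN} is a direct restatement: if $\mu_k(\Omega,h) < \mu$, then \eqref{eqEVRN} forces $\mu_k^N(\Omega) < \frac{1}{1-\eta}\bigl(\mu + K_1 H + K_2 H^2/\eta\bigr)$, so the number of Robin eigenvalues below $\mu$ is bounded by the number of Neumann eigenvalues below the shifted threshold.

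There is no real obstacle here; the only subtlety is choosing the Young splitting with parameter $\eta$ in just the right way so that the $(1-\eta)$ factor appears directly in front of $\|\nabla u\|_{L^2}^2$, which is what allows the clean passage from quadratic forms to the min--max quotient without any further loss. All constants are collected into the explicit expressions for $K_1, K_2$, and the equivalence between \eqref{eqEVRN} and \eqref{eqCountRN} is formal.
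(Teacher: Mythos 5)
Your proposal is correct and follows essentially the same route as the paper: starting from the trace-type inequality \eqref{eqTrace}, absorbing the cross term via Young's inequality with parameter $\eta$ (the paper's choice $A=(2\eta)^{-1}\Gamma_2(\Omega,F)H$ is exactly your splitting), obtaining the form inequality $q_h[u]\ge(1-\eta)q_0[u]-\bigl(K_1H+K_2H^2/\eta\bigr)\|u\|_{L^2(\Omega)}^2$, and concluding by min--max, with \eqref{eqCountRN} as a formal restatement. No gaps.
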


\begin{proof} Without loss of generality we can assume that $H>0$. It follows immediately from Inequality \eqref{eqTrace} that, for all $u\in H^1(\Omega)$ and for any parameter $A>0$,
{
\begin{equation*} 
\int_{\partial \Omega}u^2\, ds \le\Gamma_1(\Omega,F)\,\int_{\Omega}u^2\,dx+\frac{\Gamma_2(\Omega,F)\,A}{2}\int_{\Omega}u^2\,dx+\frac{\Gamma_2(\Omega,F)}{2\,A}\int_{\Omega}\left|\nabla u\right|^2\,dx.
\end{equation*}}
Choosing $A=(2\eta)^{-1}{\Gamma_2(\Omega,F)} H$, we obtain
\begin{equation}\label{eql2normbdry}
    \int_{\partial \Omega}u^2\, ds \le \frac{\eta}{H}\int_{\Omega}\left|\nabla u\right|^2\,dx+\left(K_1(\Omega,F)+K_2(\Omega,F)\frac{H}{\eta}\right)\int_{\Omega}u^2\,dx.
\end{equation}
Inequality \eqref{eql2normbdry} implies
\begin{equation}\label{eqHTrace}
\begin{split}
    \int_{\partial \Omega} h\,u^2\,ds&\ge -H\int_{\partial \Omega} u^2\,ds\\&\ge  -\eta\int_{\Omega}\left|\nabla u\right|^2\,dx-\left(K_1(\Omega,F)H+K_2(\Omega,F)\frac{H^2}{\eta}\right)\int_{\Omega}u^2\,dx.
    \end{split}
\end{equation}

Now let $q_h$ denote the quadratic form associated to the Robin problem with boundary function $h$:
\begin{equation}\label{eqqh}
    q_h[u]:=\int_{\Omega}\left|\nabla u\right|^2\,dx+\int_{\partial \Omega}h\,u^2\,ds.
\end{equation}
It is closed with domain $H^1(\Omega)$. From the min-max principle, we have
\begin{equation}\label{eqminmaxR}
    \mu_k(\Omega,h)=\min_{V_k}\max_{u\in V_k\setminus\{0\}}\frac{q_h(u)}{\|u\|_{L^2(\Omega)}^2},
\end{equation}
where $V_k$ runs over all $k$-dimensional subspaces of $H^1(\Omega)$. In particular, Equation \eqref{eqminmaxR} also holds for $h=0$, that is, for the Neumann eigenvalue problem.

It follows immediately from \eqref{eqHTrace} that, for all $u\in H^1(\Omega)$,
\begin{equation}\label{eq:ineqqh}
    q_h[u]\ge(1-\eta)q_0(u)-\left(K_1(\Omega,F)H+K_2(\Omega,F)\frac{H^2}{\eta}\right)\|u\|_{L^2(\Omega)}^2.
\end{equation}
From the min-max principle, we get, {for all $k\ge1$,}
\begin{equation*}
    \mu_k(\Omega,h)\ge (1-\eta)\mu_k^N(\Omega)-\left(K_1(\Omega,F)H+K_2(\Omega,F)\frac{H^2}{\eta}\right).
\end{equation*}
This is Inequality \eqref{eqEVRN}. Inequality \eqref{eqCountRN} is straightforwardly equivalent.
\end{proof}

We see from the previous result that to obtain an upper bound for the Robin counting function, it is sufficient to {find} an upper bound for a corresponding Neumann counting function. {This has been done in \cite{GL18}, under the additional assumption that $\Omega$ is convex, to derive explicit upper bounds for the number of Courant-sharp Neumann eigenvalues. The proof of the following result is given in \cite[Appendix A]{GL18}.}

\begin{prop}\label{propCountVol}
For any convex domain $\Omega$ and any $\mu>0$,
\begin{equation}\label{eqCountVol}
    N^N_{\Omega}(\mu)\le \frac{n^{\frac{n}{2}}}{\pi^n}\,\mu^{\frac{n}{2}}\left|\Omega+\frac{\pi}{\sqrt{\mu}}\mathbb B\right|.
\end{equation}
where the last factor in the right-hand side is the volume (i.e. the Lebesgue measure) of the Minkowski sum of the two convex sets.
\end{prop}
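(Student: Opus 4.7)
The plan is to combine a grid decomposition into cubes with the Payne--Weinberger eigenvalue inequality. Fix $\mu>0$ and set $L := \pi/\sqrt{n\mu}$, so that a cube of side $L$ has diameter $L\sqrt n = \pi/\sqrt\mu$. Tile $\R^n$ by axis-aligned closed cubes $\{Q_j\}$ of side $L$, and let $\mathcal I := \{j : Q_j\cap\Omega \neq \emptyset\}$; the open pieces $\Omega_j := \mathrm{int}(Q_j\cap\Omega)$ then partition $\Omega$ up to a null set.

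The first step is Neumann--Neumann bracketing. The restriction map embeds $H^1(\Omega)$ into $\bigoplus_{j\in\mathcal I} H^1(\Omega_j)$ and preserves the Dirichlet energy, so the min-max principle yields
\[N^N_\Omega(\mu) \leq \sum_{j\in\mathcal I} N^N_{\Omega_j}(\mu).\]
The second step is a per-piece estimate. Each $\Omega_j$ is convex as the intersection of two convex sets, with $\mathrm{diam}(\Omega_j) \leq L\sqrt n = \pi/\sqrt\mu$. The Payne--Weinberger inequality for bounded convex domains then gives
\[\mu_2^N(\Omega_j) \geq \frac{\pi^2}{\mathrm{diam}(\Omega_j)^2} \geq \mu,\]
so $N^N_{\Omega_j}(\mu) \leq 1$ for each $j\in\mathcal I$, and hence $N^N_\Omega(\mu) \leq |\mathcal I|$.

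It remains to bound $|\mathcal I|$. Picking $x_j \in Q_j\cap\Omega$ for each $j\in\mathcal I$, we see $Q_j \subset B(x_j,L\sqrt n) \subset \Omega + \frac{\pi}{\sqrt\mu}\bb$. Since the cubes are essentially disjoint, summing their volumes gives $|\mathcal I|\,L^n \leq \bigl|\Omega + \frac{\pi}{\sqrt\mu}\bb\bigr|$, and the identity $L^{-n} = (n\mu/\pi^2)^{n/2} = n^{n/2}\mu^{n/2}/\pi^n$ produces the asserted constant. The choice $L = \pi/\sqrt{n\mu}$ is made precisely so that the Payne--Weinberger bound is saturated at exactly $\mu$, which is what balances the $n^{n/2}$ factor in the constant against the $\sqrt n$ coming from the cube diameter. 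The main point to verify is that the bracketing and Payne--Weinberger apply to each $\Omega_j$ despite possible irregularities where $\partial\Omega$ meets $\partial Q_j$; this is mild, since bounded convex sets are automatically Lipschitz, so the relevant $H^1$ theory, the min-max principle, and Payne--Weinberger all apply to each $\Omega_j$ without modification.
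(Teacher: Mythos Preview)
Your argument is correct and is essentially the same as the proof given in \cite[Appendix A]{GL18}, to which the paper defers: a grid of cubes of side $\pi/\sqrt{n\mu}$, Neumann bracketing, and Payne--Weinberger on each convex piece to force $N^N_{\Omega_j}(\mu)\le 1$, followed by the inclusion of each intersecting cube in $\Omega+\frac{\pi}{\sqrt\mu}\mathbb B$. The only cosmetic point is that you may want to tile with half-open cubes (or restrict $\mathcal I$ to cubes whose \emph{interiors} meet $\Omega$) so that each $\Omega_j$ is automatically a nonempty open convex set; this does not affect the estimate.
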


From Inequality \eqref{eqCountVol} we see that an upper bound for the volume $\left|\Omega+\delta\,\mathbb B\right|$ translates into an upper bound for the Neumann counting function. 

{
When $\Omega$ is of class $C^2$ in addition to being convex, this volume can be estimated from above using the maximal scalar curvature of $\partial \Omega$. We get the following result.

\begin{cor}\label{corCountUpper} 
For any convex domain $\Omega$  of class $C^2$, for any $\mu>0$,
\begin{equation}
    \label{eq:CountUpper2}
    N^N_{\Omega}(\mu)\le  \frac{n^{\frac{n}{2}}}{\pi^n}|\Omega|\,\mu^{\frac{n}{2}}
    +n^{\frac{n}{2}}|\partial\Omega|\sum_{j=0}^{n-1}
    \frac{1}{j+1}
    \left(\begin{array}{c} n-1 \\ j\end{array}\right)\kappa_{\max}^j\left(\frac{\mu}{\pi^2}\right)^{\frac{n-j-1}{2}},
\end{equation}
where $\kappa_{\max}$ is the maximum over $\partial \Omega$ of the largest principal curvature. 
\end{cor}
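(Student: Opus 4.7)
The plan is to combine Proposition \ref{propCountVol} with a Steiner/tube-type estimate for the Minkowski-sum volume $|\Omega + \delta \mathbb B|$, where $\delta := \pi/\sqrt{\mu}$. Since $\Omega$ is convex and of class $C^2$, the parallel body $\Omega + t \mathbb B$ is also convex with $C^{1,1}$ boundary, and its boundary is the exterior $t$-parallel surface of $\partial\Omega$. Consequently, we have the layer-cake representation
\begin{equation*}
    \left|\Omega + \delta \mathbb B\right| \;=\; |\Omega| + \int_0^{\delta} \left|\partial\bigl(\Omega + t \mathbb B\bigr)\right|\,dt.
\end{equation*}

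Next, I use the classical tube formula for the surface area of a parallel surface to a convex $C^2$ hypersurface: if $\kappa_1(x),\dots,\kappa_{n-1}(x)$ denote the principal curvatures of $\partial\Omega$ at $x$ (all nonnegative by convexity), then
\begin{equation*}
    \left|\partial\bigl(\Omega + t \mathbb B\bigr)\right| \;=\; \int_{\partial\Omega} \prod_{i=1}^{n-1} \bigl(1 + t\,\kappa_i(x)\bigr)\,dS(x) \;=\; \sum_{j=0}^{n-1} t^j \int_{\partial\Omega} e_j\bigl(\kappa_1(x),\dots,\kappa_{n-1}(x)\bigr)\,dS(x),
\end{equation*}
where $e_j$ is the $j$-th elementary symmetric polynomial. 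The pointwise bound $e_j(\kappa_1,\dots,\kappa_{n-1}) \le \binom{n-1}{j}\kappa_{\max}^{\,j}$ then gives
\begin{equation*}
    \left|\partial\bigl(\Omega + t \mathbb B\bigr)\right| \;\le\; |\partial\Omega| \sum_{j=0}^{n-1} \binom{n-1}{j} \kappa_{\max}^{\,j}\, t^{j}.
\end{equation*}
Integrating in $t$ from $0$ to $\delta$ and using the monomial antiderivative $t^{j+1}/(j+1)$ yields
\begin{equation*}
    \left|\Omega + \delta \mathbb B\right| \;\le\; |\Omega| + |\partial\Omega| \sum_{j=0}^{n-1} \frac{1}{j+1}\binom{n-1}{j} \kappa_{\max}^{\,j}\, \delta^{j+1}.
\end{equation*}

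Finally, I substitute this bound into \eqref{eqCountVol} with $\delta = \pi/\sqrt{\mu}$. The leading $|\Omega|$-term produces $\tfrac{n^{n/2}}{\pi^n}|\Omega|\mu^{n/2}$, and each term in the boundary sum picks up the factor $\mu^{n/2} \cdot \pi^{j+1-n}\mu^{-(j+1)/2} = \pi^{j+1-n}\mu^{(n-j-1)/2}$, which regroups to $\bigl(\mu/\pi^2\bigr)^{(n-j-1)/2}$ after cancelling $\pi^n$ with $\pi^{j+1}$. Tracking the $n^{n/2}$ prefactor delivers exactly \eqref{eq:CountUpper2}. The only nontrivial step is justifying the tube formula at the right level of regularity: since $\partial\Omega$ is $C^2$ and $\Omega$ is convex, the reach of $\partial\Omega$ is $+\infty$ outside $\Omega$, so the exponential map from the outward normal bundle is a diffeomorphism onto $(\overline\Omega + t\mathbb B)\setminus\overline\Omega$ for every $t>0$, and the Jacobian computation giving $\prod_i(1+t\kappa_i)$ is classical; this is the main technical point, but it is standard in convex/differential geometry and requires no new work here.
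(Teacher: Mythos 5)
Your argument is correct and takes essentially the same route as the paper: Proposition \ref{propCountVol} combined with the estimate $\left|\Omega+\delta\,\mathbb B\right|\le |\Omega|+|\partial\Omega|\sum_{j=0}^{n-1}\frac{1}{j+1}\binom{n-1}{j}\kappa_{\max}^{j}\delta^{j+1}$, evaluated at $\delta=\pi/\sqrt{\mu}$. The only difference is that the paper simply cites this parallel-body volume bound from \cite[Appendix A]{GL18}, whereas you re-derive it via the coarea identity and the tube formula for the outer parallel surface of a convex $C^2$ body, which is a valid, standard, self-contained substitute.
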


\begin{proof} It was proved in  \cite[Appendix A]{GL18} that, for any $\delta>0$,
\begin{equation*}
    \left|\Omega+\delta \mathbb B\right|\le |\Omega|
    +|\partial \Omega|\sum_{j=0}^{n-1}
    \frac{1}{j+1}
    \left(\begin{array}{c} n-1 \\ j\end{array}\right)\left(\kappa_{\max}\right)^j\delta^{j+1}.
\end{equation*}
Taking $\delta=\frac{\pi}{\sqrt{\mu}}$ and using Proposition \ref{propCountVol}, we obtain \eqref{eq:CountUpper2}.
\end{proof}
}

Thus, in order to obtain an upper bound for the number of Courant-sharp Robin eigenvalues of $\Omega$, it is sufficient to obtain an upper bound for the largest Courant-sharp Robin eigenvalue and substitute it into the bounds for the counting functions using Proposition \ref{propComparisonNR} and Corollary \ref{corCountUpper}.

\subsection{Geometric upper bound for Courant-sharp Robin eigenvalues}\label{ss:geombdeval}
In this section we take $\Omega \subset \R^n$, $n \geq 2$, to be an open, bounded, connected set with $C^2$ boundary. We will see that the assumption that the boundary is $C^2$ allows us to obtain explicit geometric control in the desired bounds.

Throughout we use the following notation 
\begin{itemize}
    \item $V:=|\Omega|$ is the Lebesgue measure,
    \item $S:=|\partial \Omega|$ is the surface measure,
    \item $\rho:=S/V^{1-\frac1n}$ is the isoperimetric ratio,
    \item $t_+$ is the minimal radius of curvature (i.e., $t_{+}^{-1}$ is the supremum of the maximum modulus of the principal curvatures $\{\kappa_1, \dots, \kappa_{n-1}\}$ of $\partial \Omega$),
    \item $\delta_0$, is the minimum between $t_+$ and the cut-distance with respect to the interior of $\Omega$ (see, e.g., \cite[Section 3]{GL18} for a precise definition of the cut-distance),
    \item $\delta_1$ is the minimum between $t_+$ and the cut-distance with respect to the entire complement of $\partial \Omega$ (i.e., the interior and exterior). 
    \item $H:=\max\{-h(x)\,:\,x\in \partial \Omega\}$, where $h$ is the real-valued function appearing in the Robin boundary condition.
\end{itemize}

Our main result is the following.
\begin{thm} \label{thm:eig} 
Let $\Omega \subset \R^n$, $n \geq 2$, be an open, bounded, connected set with $C^2$ boundary. There exists a constant $C$, depending only on $n$, such that any Courant-sharp Robin eigenvalue $\mu$ satisfies
\[\mu\le C\left(\frac{V^{\frac2n}}{\delta_1^4}+\frac{\rho^4}{V^{\frac{2}{n}}}+V^{\frac2n}H^4\right).\]
\end{thm}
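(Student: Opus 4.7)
My approach pairs an upper bound on the index $k$ of the Courant-sharp eigenvalue $\mu = \mu_k$ (obtained from a Pleijel-type decomposition of nodal domains combined with the mixed Dirichlet--Neumann Faber--Krahn estimate of Lemma~\ref{prop:DFV}) with a Weyl-type lower bound on $k$ (obtained from the monotonicity $\mu_k^R(h) \le \mu_k^D(\Omega)$ and a classical Dirichlet Weyl remainder for $C^2$ domains, as in~\cite{vdBkG16cs}). The strict inequality $\gamma(n) < 1$ forces these two bounds to leave a non-trivial gap in the leading $V\mu^{n/2}$ term; solving for $\mu$ and squaring yields the claimed fourth-power estimate.

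First I would set up. Let $u_k$ be an eigenfunction of the Courant-sharp $\mu$ with nodal domains $D_1, \ldots, D_k$. Since $\partial\Omega$ is $C^2$, Example~\ref{ex:out}~a) lets me take as outward-pointing vector field $F = \nabla g$, where $g$ is a $C^2$ extension of $d(\cdot, \partial\Omega)$; the referenced explicit bounds then give $\Gamma_1(\Omega, F) \le C(n)/\delta_1$ and $\Gamma_2(\Omega, F) \le C(n)$. Proposition~\ref{prop:keyinequality} applied to each nodal domain gives $\lambda_1(D_j) \le \beta := (\sqrt{\mu + \Gamma_1 H} + \Gamma_2 H)^2$. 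Decomposing the $D_j$ into Pleijel types I, II, III as in the proof of Theorem~\ref{thm:robinplej} and applying Lemma~\ref{prop:DFV}~(a) to types I and II and Lemma~\ref{prop:DFV}~(b) to type III, one obtains
\[
k \le \frac{V}{\vtheta_0} + \frac{\beta^{n/2}\, \delta}{\vtheta_1\, c_2^{n/2}} + \frac{V\, \beta^{n/2}}{(1-\epsilon)^{n/2}\, L^{n/2}}, \qquad L := \lambda_1^D(\bb)\,|\bb|^{2/n}.
\]
For the complementary lower bound, $\mu_k^R \le \mu_k^D$ (monotonicity in $h$) combined with a Weyl estimate for $N^D$ on $C^2$ domains gives, for $\mu$ above a threshold of order $1/\delta_1^2$,
\[
k \ge N^D(\mu) \ge \frac{\omega_n V}{(2\pi)^n}\, \mu^{n/2} - C(n)\, S\, \mu^{(n-1)/2}.
\]

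Combining these, and using $\omega_n/(2\pi)^n = 1/(\gamma(n) L^{n/2})$ along with $\gamma(n) < 1$: if $\epsilon$ and $\delta$ are chosen depending only on $n$ and $V$ so that $1/\gamma(n) - 1/(1-\epsilon)^{n/2}$ is a fixed positive constant and the $V/\vtheta_0$ and $\beta^{n/2}\delta$ contributions are subdominant, cancelling the leading $V\mu^{n/2}$ terms leaves an inequality of the form $c(n) V \mu^{n/2} \lesssim S\mu^{(n-1)/2} + V H\mu^{(n-1)/2} + V/\vtheta_0 + \text{lower order}$. Dividing by $V\mu^{(n-1)/2}$ gives $\mu^{1/2} \lesssim V^{1/n}/\delta_1^2 + \rho^2/V^{1/n} + V^{1/n} H^2$; squaring and absorbing the cross-terms by AM--GM yields the stated estimate. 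A low-$\mu$ regime (where the Weyl remainder or the approximation $\beta \approx \mu$ is invalid) is handled separately by the trivial estimate $\mu \lesssim 1/\delta_1^2 + H^2$, which is absorbed in the final bound.

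\textbf{Main obstacle.} The principal technical challenge is to make the constants $\vtheta_0, \vtheta_1, c_2$ from Lemma~\ref{prop:DFV} explicit in the $C^2$ setting, with dependence only on $V, S, \delta_1$, and $n$, and to invoke a suitable quantitative Dirichlet Weyl remainder with explicit dependence on $S$ and $\delta_1$. These quantitative refinements rely on a local analysis of $\partial\Omega$ in a tube of width $\delta_1$, where the boundary is graph-like with controlled second derivatives. Once in place, extraction of the fourth-power bound by AM--GM is routine.
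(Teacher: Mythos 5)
There is a genuine gap, and it sits exactly where you flag your ``main obstacle''. Your upper bound on $k$ rests on the type I/II/III decomposition of Theorem~\ref{thm:robinplej}, whose input is Lemma~\ref{prop:DFV}; but the constants $\vtheta_0$, $\vtheta_1$, $c_2$ and the neighbourhood $\Omega_\delta$ there are inherited from the non-smooth analysis of De Ponti--Farinelli--Violo and are pure ``there exist'' constants depending on $\Omega$, with no explicit dependence on $V$, $S$, $\delta_1$. Consequently the threshold in $\mu$ above which your inequality rules out Courant-sharpness, and hence your final bound, depends on $\Omega$ through $\vtheta_0,\vtheta_1,c_2$ rather than only through $V,\rho,\delta_1,H$ with a constant $C(n)$ --- which is the whole content of the theorem; even your ``low-$\mu$'' fallback $\mu\lesssim 1/\delta_1^2+H^2$ cannot be justified, since the low-$\mu$ regime is defined by that non-explicit threshold. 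Nothing in your sketch indicates how a graph-like analysis in a tube of width $\delta_1$ would actually produce $\vtheta_0$ or $c_2$ in terms of $V,S,\delta_1,n$; doing so would amount to redoing the mixed Dirichlet--Neumann Faber--Krahn machinery quantitatively, which is a substantial piece of work, not a routine refinement. The paper avoids this entirely in the $C^2$ setting: it follows the bulk/boundary nodal-domain count of L\'ena and of \cite{GL18}, using the explicit distance-function vector field of Lemma~\ref{lem:bdsprop22} (giving $\Gamma_1\le C/\delta_0$, $\Gamma_2\le C$), the shifted quantity $\hat\mu_H$ defined in \eqref{eq:muHat} to absorb the negative part of $h$, and the explicit bounds \eqref{eq:nu0} and \eqref{eq:nu1}, in which only $V$, $S$, $\delta_0$, $\delta_1$ and $n$ appear; the nodal domains touching the boundary are counted directly against $S$ in a tubular neighbourhood, rather than through a \cite{DFV}-type $\Omega_\delta$.

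A second, independent problem is your Weyl input: the claimed uniform bound $N^D_\Omega(\mu)\ge w_nV\mu^{n/2}-C(n)S\mu^{(n-1)/2}$, with $C(n)$ depending only on $n$ and a threshold of order $1/\delta_1^2$, is not an available estimate. The uniform remainder bound used in \cite{GL18}, \cite{vdBkG16cs} and in the paper is $R^D_\Omega(\mu)\le C(SV)^{1/2}\mu^{\frac n2-\frac14}$, valid only under a geometric condition of the type \eqref{eq:cond2}, $\mu\ge CV^2/(S^2\delta_0^4)$. A sign that your input is too strong: with an $S\mu^{(n-1)/2}$ remainder your scheme would output $\mu\lesssim \rho^2V^{-2/n}+\cdots$, strictly better than the stated theorem, whose $\rho^4$ term reflects precisely the weaker $\mu^{n/2-1/4}$ remainder. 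This second point is repairable (the weaker remainder still closes the argument after adjusting exponents, as the paper does), but together with the first it means your proof as written does not produce a bound of the required explicit form.
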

From the proof of Theorem \ref{thm:eig}, we will see that the constant $C$ may be very large. 
{We note that the result of Theorem \ref{thm:eig} is an extension of the results of \cite[Propositions 7.1 and 9.2]{GL18} from the case where $h \geq 0$ to the case where $h$ can be negative. It would be possible to apply the same arguments that are used in the proof of Theorem \ref{thm:eig} below to \cite[Propositions 7.1 and 9.2]{GL18} to obtain an analogous result to that of Theorem \ref{thm:eig} where the price for having a more explicit dependence on the geometric quantities would be paid via a large constant $C$. Thus, the key novelty in Theorem \ref{thm:eig} is that it holds in the case where $h$ can be negative.}

We first need to estimate the constants $\Gamma_1(\Omega,F)$, $\Gamma_2(\Omega,F)$ from Proposition \ref{prop:keyinequality} in terms of $n$ and of some of the geometric quantities of $\Omega$. To do this we will construct an outward-pointing function and use it to obtain an outward-pointing vector field. We have the following.

\begin{lemma}\label{lem:bdsprop22}
    {Let $n\geq 2$. There exists a constant $C>0$, depending only on $n$, such that, for any set $\Omega \subset \R^n$, open, bounded, connected with $C^2$ boundary,
     we can find an outward-pointing vector field $F$ for which }
    $$ \Gamma_1(\Omega,F) \leq \frac{C}{\delta_0}, \quad \Gamma_2(\Omega,F) \leq C.$$
\end{lemma}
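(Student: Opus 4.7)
The plan is to apply Remark \ref{rem:outfnct} with $g$ chosen to be a smooth truncation of the signed distance function to $\partial\Omega$. Setting $F = \nabla g$, the bounds on $\Gamma_1(\Omega,F)$ and $\Gamma_2(\Omega,F)$ will reduce to uniform estimates of the form $\sup_\Omega|\nabla g|\le C$ and $\sup_\Omega|\Delta g|\le C(n)/\delta_0$.

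In detail, let $s$ denote the signed distance to $\partial\Omega$, positive outside $\Omega$ and negative inside. Since $\partial\Omega$ is of class $C^2$ and the definition of $\delta_0$ gives $\delta_0\le t_+$ together with $\delta_0\le$ (interior cut-distance), the function $s$ is $C^2$ on a tubular neighborhood of $\partial\Omega$ that reaches depth at least $\delta_0$ on the interior side; there $|\nabla s|\equiv 1$ and $\nabla s\cdot\nu = 1$ on $\partial\Omega$. I would then pick a smooth cutoff $\psi\colon\R\to\R$ with $\psi(t)=t$ for $|t|\le\delta_0/4$, $\psi$ constant for $|t|\ge\delta_0/2$, and universal bounds $|\psi'|\le C$, $|\psi''|\le C/\delta_0$ (for instance, $\psi(t)=t\,\chi(t)$ with a standard cutoff $\chi$). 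Set $g:=\psi\circ s$ on this tubular neighborhood and extend by a constant to a neighborhood of $\bOm$ in $\R^n$. On $\partial\Omega$ we then have $\nabla g\cdot\nu = \psi'(0)\,\nabla s\cdot\nu = 1$, so $\gamma_{\nabla g}(\Omega)=1$.

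The key analytic step is the pointwise estimate of $\Delta s$ in the tubular neighborhood. The standard Jacobi-field / parallel-surface description of $s$ expresses $\Delta s(x)$, at a point $x$ with nearest boundary point $y$, as a sum of $n-1$ terms of the form $\kappa_i(y)/(1+s(x)\kappa_i(y))$ (up to an overall sign depending on conventions), where the $\kappa_i$ are the principal curvatures of $\partial\Omega$ at $y$. Since $|\kappa_i|\le 1/t_+$ and $|s|\le \delta_0/2\le t_+/2$ on the support of $\psi'$, each denominator is bounded below by $1/2$, and hence
\[
|\Delta s|\;\le\;\frac{2(n-1)}{t_+}\;\le\;\frac{2(n-1)}{\delta_0}.
\]
Combining with $\Delta g = \psi''(s)+\psi'(s)\Delta s$ and the bounds on $\psi$ gives
\[
\sup_\Omega|\nabla g|\le C,\qquad \sup_\Omega|\Delta g|\le \frac{C(n)}{\delta_0}.
\]
Remark~\ref{rem:outfnct} then yields $\Gamma_2(\Omega,F)\le C$ and $\Gamma_1(\Omega,F)\le C(n)/\delta_0$, which are the claimed bounds.

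The main obstacle is securing the $C^2$ regularity of the signed distance on an interior tubular neighborhood of explicitly controlled width $\delta_0$; this is exactly what the definition of $\delta_0$ combined with the $C^2$ boundary hypothesis is designed to provide. Once that step is granted, the remainder of the argument is a routine bookkeeping of cutoff derivatives and the classical formula for $\Delta s$ in normal coordinates, together with the standard $C^2$ extension of $g$ from $\bOm$ to an ambient neighborhood, available from the two-sided tubular neighborhood theorem for $C^2$ boundaries.
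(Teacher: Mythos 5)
Your proposal is correct and takes essentially the same route as the paper: a cutoff of the (signed) distance function, $C^2$ regularity of the distance on the interior $\delta_0$-neighborhood, the principal-curvature formula for $\Delta d$ with $|\kappa_i|\le t_+^{-1}$ and $\delta_0\le t_+$ to control the denominators, and Remark \ref{rem:outfnct} to pass to $\Gamma_1(\Omega,F)$, $\Gamma_2(\Omega,F)$. The only cosmetic difference is normalization: the paper rescales the distance by $\delta_0$ inside the cutoff (so $\gamma_g=1/\delta_0$, $|\nabla g|\le C/\delta_0$, $|\Delta g|\le C/\delta_0^2$), while you keep $\gamma_g=1$ and place the $\delta_0$-scale in the cutoff derivatives; since the ratios defining $\Gamma_1$ and $\Gamma_2$ are invariant under rescaling $g$, the resulting bounds are identical.
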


\begin{proof}
In order {to find such an $F$ and to estimate $\Gamma_1(\Omega,F)$, $\Gamma_2(\Omega,F)$,} we first construct an outward-pointing function.
Let $\varphi:\mathbb R \to \mathbb R$ be a $C^\infty$ function such that 
\begin{enumerate}[(i)]
	\item $\varphi(t)=t$ for $t\le \frac12$, 
	\item $\varphi(t)=c\in\left(\frac12,\frac34\right)$ for $t\ge\frac34$.
\end{enumerate}
We define 
\begin{equation*}
	g(x):=\varphi\left(\sigma(x)\frac{d(x,\partial \Omega)}{\delta_0}\right),
\end{equation*}
where
\begin{equation*}
	\sigma(x):=
	\begin{cases}
		1 & \mbox{ if }x\in \Omega,\\
		0 & \mbox{ if }x\in\partial \Omega,\\
		-1& \mbox{ if }x\in\mathbb R^n\setminus\overline{\Omega}.
	\end{cases}
\end{equation*}
If $x\in\Omega$ and $d(x,\partial \Omega)\ge \frac34 \delta_0$, $g(x)=c$. On the other-hand, it follows from the definition of $\delta_0$ that the function $x\mapsto d(x,\partial \Omega)$ is of class $C^2$ in the open inner tubular neighborhood of the boundary 
\[\partial \Omega^+_{\delta_0}:=\left\{x\in \Omega\,:\, d(x,\partial \Omega)<\delta_0 \right\}.\]
(See, for example, \cite[Lemma 14.16]{GT}).
It follows that $g$ is $C^2$ in $\Omega$, and in fact is $C^2$ in an open-neighborhood\footnote{The distance from $\partial \Omega$ up to which $g$ is $C^2$ in the complement of $\Omega$ will depend on $\Omega$, through the geometric parameter $\delta_1$. However, since we only consider the values of $g$ and its derivatives in $\overline{\Omega}$, {we just need $g$ to be $C^2$ in some open neighborhood of $\overline{\Omega}$}, even very small, and only the parameter $\delta_0$ will play a role.}  of $\overline{\Omega}$.

For $x$ in the inner tubular neighborhood $\partial \Omega^+_{\delta_0}$, {the chain rule implies}
\begin{equation}\label{eq:grad}
		\nabla g(x)=\frac1{\delta_0}\varphi'\left(\frac{d(x,\partial \Omega)}{\delta_0}\right) \nabla d(x,\partial \Omega).
		\end{equation} 
It is well-known that 
\begin{equation*}
	|\nabla d(x,\partial \Omega)|=1
\end{equation*}
and it is clear that there exists some absolute constant $C_1$ such that 
\begin{equation*}
	|\varphi'(t)|\le C_1
\end{equation*}
for all $t\in \mathbb R$. It follows that 
\begin{equation*}
	|\nabla g(x)|\le \frac{C_1}{\delta_0} 
\end{equation*}
for all $x\in \partial \Omega^+_{\delta_0}$. We note that Formula \eqref{eq:grad} extends by continuity to $x\in \partial \Omega$. Since $\varphi'(0)=1$, this implies, for all $x\in \partial \Omega$,
\begin{equation}\label{eq:normal}
	\nabla g(x)\cdot \nu(x) =\frac1{\delta_0}.
\end{equation}
In particular, $g$ is an outward-pointing function with constant $\gamma_g=\frac{1}{\delta_0}$. 

Furthermore, from Property (ii) of $\varphi$, $\nabla g(x)=0$ for all $x\in \Omega$ such that $d(x,\partial \Omega)\ge\frac34\delta_0$. It follows that 
\begin{equation}\label{eq:upper-grad}
	|\nabla g(x)|\le \frac{C_1}{\delta_0}
\end{equation}
for all $x\in\overline{\Omega}$.

Differentiating Formula \eqref{eq:grad} once more, we find, for $x\in \partial \Omega^+_{\delta_0}$, that 
\begin{align}\label{eq:laplacian}
	\Delta g(x)&=\frac{1}{\delta_0^2}\varphi''\left(\frac{d(x,\partial\Omega)}{\delta_0}\right)\left|\nabla d(x,\partial\Omega)\right|^2+\frac{1}{\delta_0}\varphi'\left(\frac{d(x,\partial\Omega)}{\delta_0}\right)\Delta d(x,\partial \Omega) \nonumber \\
	&=\frac{1}{\delta_0^2}\varphi''\left(\frac{d(x,\partial\Omega)}{\delta_0}\right)+\frac{1}{\delta_0}\varphi'\left(\frac{d(x,\partial\Omega)}{\delta_0}\right)\Delta d(x,\partial \Omega).
\end{align}
It is clear that there exists some absolute constant $C_2$ such that 
\[|\varphi''(t)|\le C_2\]
for all $t\in \mathbb R$. Property (ii) of $\varphi$ implies that $\Delta g(x)=0$ for all $x\in\Omega$ such that $d(x,\partial \Omega)\ge \frac{3}{4}\delta_0$. 
It remains to estimate $|\Delta d(x,\partial \Omega)|$.
Since $\partial \Omega$ is $C^2$, {we have, for all $x\in \partial \Omega_{\delta_0}^+$,}
\begin{equation*}
    \Delta d({x}, \partial \Omega) = \sum_{i=1}^{n-1} \frac{-\kappa_i}{1 - \kappa_i\, d({x}, \partial \Omega)},
\end{equation*}
{where the $\kappa_1,\dots,\kappa_{n-1}$ are the principal curvatures at $y$, the unique point in $\partial \Omega$ such that $|x-y|=d(x,\partial \Omega)$.}
(See, for example, \cite[Appendix 14.6]{GT}).
{Let us recall that, by definition, $\delta_0\le t_+$ and $\kappa_i\leq t_+^{-1}$ for all $1\le i\le n-1$. Then,
\begin{equation*}
    |\kappa_i\, d(x, \partial \Omega)| \leq |\kappa_i| \cdot \frac{3\delta_0}{4}\leq |\kappa_i| \cdot \frac{3t_+}{4} \leq \frac{3}{4}.
\end{equation*}
}
Therefore, if $x\in \partial\Omega^+_{3\delta_0/4}$,  
\begin{equation*}
	\left|\Delta d(x,\partial \Omega)\right| \leq \sum_{i=1}^{n-1} 4 |\kappa_i| \le \frac{4(n-1)}{t_+}\le \frac{4(n-1)}{\delta_0}.
\end{equation*}
This finally gives us
\begin{equation}\label{eq:upper-laplacian}
	\left|\Delta g(x)\right|\le \frac{C_2}{\delta_0^2}+\frac{4(n-1)C_1}{\delta_0^2}.
\end{equation}

Now we set
\[C:=C_2+4(n-1)C_1\]
(and we note that $C\ge 2 C_1$). Since $g$ is an outward-pointing function with constant $\gamma_g$, $F:=\nabla g$ is an outward-pointing vector field, with constant
\[\gamma_F=\gamma_g=\frac{1}{\delta_0}.\]
Using the formulas for $\Gamma_1(\Omega,F)$ and $\Gamma_2(\Omega,F)$ from Proposition \ref{prop:keyinequality}, we obtain
\begin{align*}
	\Gamma_1(\Omega,F)=&\frac{1}{\gamma_F}\sup_{\Omega}|{\rm div}(F)|=\delta_0\sup_{\Omega}|\Delta g|\le \delta_0\frac{C}{\delta_0^2}=\frac{C}{\delta_0},\\
	\Gamma_2(\Omega,F)=&\frac2{\gamma_F}\sup_\Omega|F|=2\,\delta_0\,\sup_\Omega|\nabla g|\le 2\,\delta_0\frac{C_1}{\delta_0}\le 2\,C_1\le C. 
\end{align*}
\end{proof}

In order to prove Theorem \ref{thm:eig}, we make use of the following standard inequalities.
\begin{rem}\label{rem:ineqs}
\begin{enumerate}[(i)]
Let $a$ and $b$ be non-negative numbers, $0\le\theta\le1$ and $p\ge1$.
    \item $(a+b)^\theta\le a^\theta+b^\theta$. If $\theta<1$ and $a,b>0$, the inequality is strict. A useful special case is $\theta=\frac12$: $\sqrt{a+b}\le \sqrt{a}+\sqrt{b}$. \label{eq:2}
    \item $ab^{p-1}\le C\left(a^p+b^p\right)$, with $C=C(p)$.\footnote{By this, we mean that for a given $p$, there exists a constant $C$, depending only on $p$, such that Inequality~\eqref{eq:3} is satisfied for all $a,b\ge0$. The meaning in the other statements in Remark \ref{rem:ineqs} is similar.} \label{eq:3}
      \item If $a\le b$, then $b^p-a^p\le p\,b^{p-1}(b-a)$. \label{eq:6}
    \item $(a+b)^p\le C\left(a^p+b^p\right)$, with $C=C(p)$. \label{eq:7}
    \item For all $\eps>0$, $(a+b)^p\le (1+\eps)\,a^p+C\,b^p$, with $C=C(p,\eps)$. \label{eq:8}
\end{enumerate}
\end{rem}
By using these inequalities in the proof of Theorem \ref{thm:eig}, we obtain the existence of a constant $C(n)$, depending only on $n$, as claimed. At each step of the proof, we take the maximum of various constants depending only on $n$ and thus any closed-form expression arising would be very complicated and far from optimal. Hence we use $C=C(n)$ throughout to denote a constant depending on $n$ which may change from line to line.

\begin{proof}[Proof of Theorem \ref{thm:eig}]
    We define
    \begin{equation}\label{eq:muHat}
    \hat\mu_H^{\frac12}:=\left(\mu+C\frac{H}{\delta_0}\right)^{\frac12}+CH,
    \end{equation}
    where $C$ is a constant depending only on $n$, defined by Lemma \ref{lem:bdsprop22}.
    The idea of the proof is to apply similar arguments to those used \cite{GL18} to $\hat\mu_H$ instead of to $\mu$.
    
    We assume $\mu\ge0$. Then, according to Remark \ref{rem:ineqs} part \eqref{eq:2},
\[\hat\mu_H^{\frac12}\le \mu^{\frac12}+\Delta_H,\]
with
\[\Delta_H:=\left(C\frac{H}{\delta_0}\right)^{\frac12}+CH.\]

We fix $\hat\eps(n)\in (0,1)$ (to be specified later). We consider a spectral pair (eigenvalue-eigenfunction) $(\mu,u)$ for the Robin eigenvalue problem. As in \cite{cL16, GL18, HS}, we consider $\nu_0$ and $\nu_1$, respectively the number of bulk and boundary nodal domains (defined using $\eps_0=\hat\eps(n)$), and $\hat\mu_H$, associated with $\mu$ via Equation \eqref{eq:muHat}.

If 
\begin{equation}\label{eq:cond-above}
\left(\frac{V^{\frac2n}}{\hat\mu_H}\right)^{\frac14}\le \delta_1,
\end{equation}
that is, if 
\begin{equation}\label{eq:cond1}
\hat\mu_H\ge \frac{V^{\frac2n}}{\delta_1^4},
\end{equation}
then, by following the same arguments as  in \cite{cL16,GL18}, we have
\begin{equation}\label{eq:nu0}
	\nu_0\le\frac{V}{\Lambda(n)^{\frac{n}{2}}}\left((1+\eps(n))\hat\mu_H+C\,\frac{\hat\mu_H^{\frac12}}{V^{\frac1n}}\right)^{\frac{n}{2}},
\end{equation}
where $\Lambda(n)$ equals $\lambda_1(B)$, the first Dirichlet eigenvalue of the ball $B\subset \R^n$ of volume $1$, and $\eps(n)>0$ is defined by
\[1+\eps(n)=\frac{1+\hat\eps(n)}{1-\hat\eps(n)}\]
and can be chosen arbitrarily small by taking $\hat\eps(n)$ small enough, at the cost of a larger $C$. In what follows, we express the inequalities in terms of $\eps(n)$, for which we will specify some properties later (this correspondingly specifies $\hat\eps(n)$). Let us note the crucial role played by Proposition \ref{prop:keyinequality} at this stage of the proof: it gives us an upper bound for the Rayleigh quotient of the eigenfunction $u$ in one of its nodal domains, which allows us to extend the analysis of \cite{cL16,GL18} to the Robin problem considered here, which has a possibly negative $h$.

Still under Condition \eqref{eq:cond1}, we have\footnote{Inequalities \eqref{eq:nu0} and \eqref{eq:nu1} essentially correspond to the upper bounds on page {123} of \cite{GL18}. However, there is a gap in the proof of the upper bound for $\nu_1$ in that reference. It can be fixed at the cost of changing the values of the constants and putting a stronger condition on $\mu$. Namely, one must impose that Inequality \eqref{eq:cond-above} is satisfied, rather than Inequality (34) in \cite{GL18}, meaning that $\delta_0$ must be replaced with $\delta_1$. That way, one obtains \eqref{eq:nu1}.}
\begin{equation}\label{eq:nu1}
	\nu_1\le C(n)\,S\,V^{\frac{1}{2n}}\hat\mu_H^{-\frac14}\left(\hat\mu_H+\frac{\hat\mu_H^{\frac12}}{V^{\frac1n}}\right)^{\frac{n}{2}}.
\end{equation}

Now, as in \cite[Section 2.1]{GL18}, let $(\lambda_k(\Omega))_{k\ge1}$ denote the Dirichlet eigenvalues of the Laplacian on $\Omega$ and, for $\mu>0$, define the Dirichlet counting function as:
\begin{equation*}
	N_\Omega^{D}(\mu):=\sharp\{k\in\N\,:\,\lambda_k(\Omega)<\mu\},
\end{equation*}
and the corresponding remainder $R_{\Omega}^{D}(\mu)$ such that
\begin{equation}
\label{eqDefRemainder}
	N_{\Omega}^{D}(\mu)=\frac{\omega_n \vert \Omega \vert}{(2\pi)^{n}}\mu^{n/2}-R_{\Omega}^{D}(\mu),
\end{equation}
where the first term in the right-hand side of Equation \eqref{eqDefRemainder} corresponds to Weyl's law.

By monotonicity of the Robin eigenvalues, we have
\begin{equation*}
	N_{\Omega}^h(\mu)\ge N_{\Omega}^D(\mu)=w_n\,V\,\mu^{\frac{n}{2}}-R_{\Omega}^D(\mu)
\end{equation*}
with $w_n:=\omega_n/(2\pi)^n$. From the known bounds on the Dirichlet counting function (see \cite[Section 9.2]{GL18} and \cite[Section 2]{vdBkG16cs}), we {find} that under a condition of the form
\begin{equation}\label{eq:cond2}
\mu \ge C\frac{V^2}{S^2\,\delta_0^4},
\end{equation}
we have
\begin{equation*}
	R_{\Omega}^D(\mu)\le C\,(SV)^{\frac12}\mu^{\frac{n}{2}-\frac14} \le C\,(SV)^{\frac12}\left(\hat\mu_H\right)^{\frac{n}{2}-\frac14}.
\end{equation*}
It follows that 
\begin{equation*}
	N_{\Omega}^h(\mu)\ge w_n\, V\, \hat\mu_H^{\frac{n}{2}}-C\,(SV)^{\frac12}\hat\mu_H^{\frac{n}{2}-\frac14}-w_n\,V\,\left(\hat\mu_H^{\frac{n}{2}}-\mu^{\frac{n}{2}}\right).
\end{equation*}
Using Remark \ref{rem:ineqs} part \eqref{eq:6} with $a=\mu^{\frac12}$, $b=\hat\mu_H^{\frac12}$ and $p=n$, we get 
\begin{equation*}
	\hat\mu_H^{\frac{n}{2}}-\mu^{\frac{n}{2}}\le n\,\hat\mu_H^{\frac{n}{2}-\frac12}\left(\hat\mu_H^{\frac12}-\mu^{\frac12}\right)\le n\,\hat\mu_H^{\frac{n}{2}-\frac12}\,\Delta_H.
\end{equation*}
Hence we obtain
\begin{equation} \label{eq:counting}
	N_{\Omega}^h(\mu)\ge w_n\, V\, \hat\mu_H^{\frac{n}{2}}-C\,(SV)^{\frac12}\hat\mu_H^{\frac{n}{2}-\frac14}-C\,V\,\hat\mu_H^{\frac{n}{2}-\frac12}\,\Delta_H.
\end{equation}

A necessary condition for $(\mu,u)$ to be a Courant-sharp pair is 
\[N_\Omega^h(\mu)-\nu_0-\nu_1<0.\]
If $\mu\ge0$ and Conditions \eqref{eq:cond1} and \eqref{eq:cond2} are satisfied, we have
\begin{align}
	N_\Omega^h(\mu)-\nu_0-\nu_1&\ge w_n\, V\, \hat\mu_H^{\frac{n}{2}}-C\,(SV)^{\frac12}\hat\mu_H^{\frac{n}{2}-\frac14}-C\,V\,\hat\mu_H^{\frac{n}{2}-\frac12}\,\Delta_H \nonumber\\
	&-\frac{V}{\Lambda(n)^{\frac{n}{2}}}\left((1+\eps(n))\hat\mu_H+C\,\frac{\hat\mu_H^{\frac12}}{V^{\frac1n}}\right)^{\frac{n}{2}} \nonumber\\
	&-C\,S\,V^{\frac{1}{2n}}\hat\mu_H^{-\frac14}\left(\hat\mu_H+\frac{\hat\mu_H^{\frac12}}{V^{\frac1n}}\right)^{\frac{n}{2}}. \label{eq:lbdcount}
\end{align} 

We choose to express the various inequalities in terms of the rescaled quantities
\begin{align*}
\xi&:=V^{\frac1{n}}\mu^{\frac{1}{2}},\\
\hat\xi&:=V^{\frac1{n}}\hat\mu_{H}^{\frac{1}{2}}.
\end{align*}
Inequality \eqref{eq:lbdcount} then becomes
\begin{align*}
	N_\Omega^h(\mu)-\nu_0-\nu_1\ge&w_n\, \hat\xi^n-C\,\rho^{\frac12}\hat\xi^{n-\frac12}-C\,V^{\frac1n}\,\Delta_H\,\hat\xi^{n-1}\\
	&-\frac{1}{\Lambda(n)^{\frac{n}{2}}}\left((1+\eps(n))\hat\xi^2+C\,\hat\xi\right)^{\frac{n}{2}}\\
	&-C\,\rho\,\hat\xi^{-\frac12}\left(\hat\xi^2+\hat\xi\right)^{\frac{n}{2}}.
\end{align*} 
If we apply Remark \ref{rem:ineqs} part \eqref{eq:8} to the term on the second line (with $p=\frac{n}2$ and $\eps=\eps(n)$) and rearrange the right-hand side, we find
\begin{align}
	N_\Omega^h(\mu)-\nu_0-\nu_1\ge&\left(w_n-\frac{(1+\eps(n))^{\frac{n}{2}+1}}{\Lambda(n)^{\frac{n}{2}}}\right)\, \hat\xi^n-C\,\rho^{\frac12}\hat\xi^{n-\frac12}-C\,V^{\frac1n}\,\Delta_H\,\hat\xi^{n-1} \nonumber\\
	&-C\,\hat\xi^{\frac{n}{2}}-C\,\rho\,\hat\xi^{-\frac12}\left(\hat\xi^2+\hat\xi\right)^{\frac{n}{2}}.
 \label{eq:lbdcount2}
\end{align} 

Since
\[w_n>\frac{1}{\Lambda(n)^{\frac{n}{2}}},\]
we can choose $\eps(n)>0$ such that 
\[\ell(n):=w_n-\frac{(1+\eps(n))^{\frac{n}{2}+1}}{\Lambda(n)^{\frac{n}{2}}}>0.\]
Furthermore, we have, from the isoperimetric inequality, $\rho\ge n\omega_n^{\frac{1}{n}}$. It follows that for any $0\le \alpha<\beta$, 
\[\rho^{\alpha}\le C\rho^{\beta},\]
with $C=C(n,\alpha,\beta):=(n\omega_n^{1/n})^{\alpha-\beta}$. In addition, if $\hat\xi\ge\rho$, we have, with the same $C$,
\[\hat\xi^{\alpha}\le C\hat\xi^{\beta}.\]
Using these remarks in Inequality \eqref{eq:lbdcount2}, we obtain, if $\hat\xi\ge \rho$, 
\begin{equation*}
	N_\Omega(\mu)-\nu_0-\nu_1\ge \ell(n)\,\hat\xi^n-C\left(\rho+V^{\frac{1}{n}}\Delta_H\right)\,\hat\xi^{n-\frac12}.
\end{equation*} 
Now, using Remark \ref{rem:ineqs} part \eqref{eq:3} with $a=\delta_0^{-\frac12}$, $b=H^{\frac12}$ and $p=2$, we find
\[\Delta_H\le C\left(\frac1{\delta_0}+H\right).\]
Therefore, if $\hat\xi\ge \rho$, 
\begin{equation}\label{eq:lower}
	N_\Omega(\mu)-\nu_0-\nu_1\ge\ell(n)\,\hat\xi^n-C\left(\rho+\frac{V^{\frac1n}}{\delta_0}+V^{\frac1n}H\right)\,\hat\xi^{n-\frac12}.
\end{equation}
If $\hat\xi\ge\hat\xi^*$, with 
\[\hat\xi^*:=\frac{C^2}{\ell(n)^2}\left(\rho+\frac{V^{\frac1n}}{\delta_0}+V^{\frac{1}{n}}H\right)^2,\]
then the right-hand side of Inequality \eqref{eq:lower} is non-negative. Note that by applying Remark \ref{rem:ineqs} part \eqref{eq:7} with $p=2$ repeatedly, with find that there exists $C$ large enough so that 
\[\hat\xi^*\le C\left(\rho^2+\frac{V^{\frac2n}}{\delta_0^2}+V^{\frac{2}{n}}H^2\right).\]

Putting everything together, we conclude that there exists a constant $C$ (large enough) so that under conditions \eqref{eq:cond1} and \eqref{eq:cond2}, if
\begin{equation*}
\hat\xi\ge C\left(\rho^2+\frac{V^{\frac2n}}{\delta_0^2}+V^{\frac{2}{n}}H^2\right),
\end{equation*}
then $(u,\mu)$ cannot be Courant-sharp (we note that $C$ can be chosen large enough that the above inequality implies $\hat\xi\ge \rho$). 

Conditions \eqref{eq:cond1} and \eqref{eq:cond2} are not independent. Since $\delta_1\le\delta_0$ by definition and $\rho\ge n\omega_n^{1/n}$, we have
\[\frac{V^2}{S^2\delta_0^4}\le\frac{V^{\frac{2}{n}}}{\rho^2\delta_1^4}\le C\frac{V^{\frac{2}{n}}}{\delta_1^4},\]
with $C=(n\omega_n)^{-2}$. Under the condition $\mu\ge0$, we have $\hat\mu_H\ge\mu$. Therefore, we can choose $C$ (large enough) so that 
\[\mu\ge C\frac{V^{\frac{2}{n}}}{\delta_1^4}\] 
implies both conditions \eqref{eq:cond1} and \eqref{eq:cond2}.

It follows that, if $(\mu,u)$ is Courant-sharp,
\[\xi\le \max\left\{C\frac{V^{\frac{2}{n}}}{\delta_1^2}, C\left(\rho^2+\frac{V^{\frac2n}}{\delta_0^2}+V^{\frac{2}{n}}H^2\right)\right\},\]
which, {since} $\delta_1\le \delta_0$, implies
\begin{equation}
\label{eq:xi-upper}
\xi\le C \left(\frac{V^{\frac2n}}{\delta_1^2}+\rho^2+V^{\frac{2}{n}}H^2\right).
\end{equation}

From $\mu=\xi^2/V^{\frac2n}$ and from Remark \ref{rem:ineqs} part \eqref{eq:7}, applied repeatedly with $p=2$, we obtain that, for $(\mu,u)$ Courant-sharp,
 \[\mu\le C \left(\frac{V^{\frac2n}}{\delta_1^4}+\frac{\rho^4}{V^{\frac2n}}+V^{\frac{2}{n}}H^4\right).\]
\end{proof}

\subsection{Geometric upper bound for the number of Courant-sharp Robin eigenvalues of a convex, $C^2$ domain}
In this section we take $\Omega \subset \R^n$, $n \geq 2$, to be an open, bounded, connected, convex set with $C^2$ boundary. We use the same notation as was introduced at the beginning of Section \ref{ss:geombdeval}. We note that the convexity assumption on $\Omega$ allows us to employ Corollary \ref{corCountUpper}.

Our main result is the following.
\begin{thm} \label{thm:number} 
Let $\Omega \subset \R^n$, $n \geq 2$, be an open, bounded, connected, convex set with $C^2$ boundary.
There exists a constant $C$, depending only on $n$, such that the number of Courant-sharp Robin eigenvalues of $\Omega$ is at most
\[{C}\left(\frac{V^{2}}{t_+^{2n}}+\rho^{2n}+V^{2}H^{2n}\right).\]
\end{thm}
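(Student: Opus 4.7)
The plan is to combine three previously established results: Theorem \ref{thm:eig}, bounding any Courant-sharp Robin eigenvalue; Proposition \ref{propComparisonNR}, comparing the Robin counting function to a shifted Neumann counting function; and Corollary \ref{corCountUpper}, giving an explicit upper bound on the Neumann counting function for a convex $C^2$ domain. If $\mu^*$ denotes the largest Courant-sharp Robin eigenvalue, then the total number of Courant-sharp Robin eigenvalues (with multiplicity) is at most $\lim_{\epsilon \to 0^+} N_\Omega^h(\mu^* + \epsilon)$, so it suffices to control this quantity.

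For a convex domain with $C^2$ boundary, the exterior reach is infinite and the interior reach equals $1/\kappa_{\max} = t_+$ (the inner parallel body $\{x \in \Omega : d(x, \partial \Omega) \ge s\}$ remains regular precisely for $s < t_+$). Consequently $\delta_0 = \delta_1 = t_+$ in this setting, and Theorem \ref{thm:eig} gives
\[\mu^* \le C\left(\frac{V^{2/n}}{t_+^4} + \frac{\rho^4}{V^{2/n}} + V^{2/n} H^4\right).\]
Next, I would apply Proposition \ref{propComparisonNR} with $\eta = 1/2$ and with $F$ the outward-pointing vector field from Lemma \ref{lem:bdsprop22} (for which $K_1(\Omega,F) \le C/t_+$ and $K_2(\Omega, F) \le C$), yielding
\[N_\Omega^h(\mu^* + \epsilon) \le N_\Omega^N(\tilde\mu), \qquad \tilde\mu := 2(\mu^* + \epsilon) + C\left(H/t_+ + H^2\right).\]
Finally, I would apply Corollary \ref{corCountUpper}, using $\kappa_{\max} \le 1/t_+$ and $S = \rho V^{1-1/n}$, to obtain an explicit bound of the form
\[N_\Omega^N(\tilde\mu) \le C V \tilde\mu^{n/2} + C\rho V^{1-1/n} \sum_{j=0}^{n-1} t_+^{-j}\, \tilde\mu^{(n-j-1)/2},\]
into which the bound for $\tilde\mu$ (via that for $\mu^*$) is substituted before letting $\epsilon \to 0$.

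The main difficulty will be the algebraic reduction at the end. Expanding $\tilde\mu^{n/2}$ and $\tilde\mu^{(n-j-1)/2}$ using the inequality $(a_1+\cdots+a_m)^p \le C(a_1^p+\cdots+a_m^p)$ from Remark \ref{rem:ineqs} produces a combinatorial zoo of monomials in $V$, $\rho$, $t_+$, $H$; each must be shown to be dominated by a dimensional multiple of $V^2/t_+^{2n} + \rho^{2n} + V^2 H^{2n}$. The three ``pure'' leading contributions $V(V^{2/n}/t_+^4)^{n/2} = V^2/t_+^{2n}$, $V(\rho^4/V^{2/n})^{n/2} = \rho^{2n}$, and $V(V^{2/n} H^4)^{n/2} = V^2 H^{2n}$ land exactly on the target quantities. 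The real work is absorbing the remaining mixed terms --- both those from the $H/t_+$ and $H^2$ corrections inside $\tilde\mu$, and those from the boundary-correction terms $S t_+^{-j} \tilde\mu^{(n-j-1)/2}$ for $j \ge 1$ --- using Young's inequality to split mixed monomials into pure powers, the isoperimetric inequality $\rho \ge n\omega_n^{1/n}$ to replace low powers of $\rho$ by high ones, and the identity $S = \rho V^{1-1/n}$ to trade surface measure for a power of volume and a power of the isoperimetric ratio. This bookkeeping is routine but intricate, and the resulting constant $C = C(n)$ will necessarily be far from optimal.
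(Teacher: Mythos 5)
Your proposal follows essentially the same route as the paper's proof: the bound of Theorem \ref{thm:eig} with $\delta_0=\delta_1=t_+$ by convexity, Proposition \ref{propComparisonNR} with $\eta=\tfrac12$ and the vector field from Lemma \ref{lem:bdsprop22}, Corollary \ref{corCountUpper}, and then the same absorption of the mixed monomials via Remark \ref{rem:ineqs} and the isoperimetric bound $\rho\ge n\omega_n^{1/n}$. The only detail the paper adds is working with $x_+=\max\{0,x\}$ (i.e.\ replacing the largest Courant-sharp eigenvalue by its positive part) to cover possibly negative Robin eigenvalues, a triviality your argument handles by monotonicity of the counting function.
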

{We note that in Theorem \ref{thm:number} $h$ can be negative. A similar result for the case where $h \geq 0$ was obtained in \cite[Propositions 8.3 and 9.4]{GL18}. }

Roughly speaking, the strategy of the proof is to substitute the result of Theorem \ref{thm:eig} into that of Corollary \ref{corCountUpper}. As the constant given in the statement of Theorem \ref{thm:eig} may be very large, the constant $C$ given in the statement of Theorem \ref{thm:number} may be very large too.

\begin{proof}
By Lemma \ref{lem:bdsprop22}, we see that there exists a constant $C>0$, depending only on $n$ {and a vector field $F:\Omega\to\mathbb R^n$, outward-pointing relative to $\Omega$,} such that Proposition \ref{propComparisonNR} holds with
$$ K_1(\Omega,F) \leq \frac{C}{{\delta_0}}, \quad K_2(\Omega,F) \leq C.$$
{Thus}, there exists a constant $K$, {depending only on $n$,} such that, for all $\eta\in(0,1)$ and all $x\ge0$,
\begin{equation}
	N_{\Omega}^h(x)\le N_{\Omega}^N\left(\frac1{1-\eta}\left(x+K\left(\frac{H}{\delta_0}+\frac{H^2}{\eta}\right)\right)\right). \label{eq:Countupper4}
\end{equation}
Since $\Omega$ is convex, inequality \eqref{eq:CountUpper2} gives an upper bound for the Neumann counting function, which we can put into a simplified form: there exists a constant $C$, depending only on $n$, such that for all $x\geq 0$,
\begin{equation}
	N_{\Omega}^N(x)\le C\,x^{\frac{n}{2}}\left(V+S\,x^{-\frac{1}{2}}\left(1+\frac{1}{t_+\sqrt{x}}\right)^{n-1}\right). \label{eq:Countupper3}
\end{equation}

We begin by deducing a weaker inequality from Inequality \eqref{eq:Countupper3} which is easier to use. Setting $y=V^{\frac1n}x^{\frac12}$ (dimensionless variable), we have
\begin{equation*}
N_\Omega^N\left(x\right)\le C\,y^n\left(1+\rho\,y^{-1}\left(1+\frac{V^{\frac1n}}{t_+y}\right)^{n-1}\right)
							  = C\left(y^n+\rho\,\left(y+\frac{V^{\frac1n}}{t_+}\right)^{n-1}\right).
\end{equation*}

Using Remark \ref{rem:ineqs} part \eqref{eq:7} and part \eqref{eq:3} (twice), we get
\begin{equation*}
N_\Omega^N\left(x\right)\le C\left(y^n+\rho\,y^{n-1}+\rho\,\left(\frac{V^{\frac1n}}{t_+}\right)^{n-1}\right)
							  \le C\left(y^n+\rho^n+y^n+\rho^n+\left(\frac{V^{\frac1n}}{t_+}\right)^{n}\right).		
\end{equation*}
We obtain, for $x\ge0$,
\begin{equation}\label{eq:upper-counting}
	N_\Omega^N\left(x\right) \le C\left(y^n+\rho^n+\left(\frac{V^{\frac1n}}{t_+}\right)^{n}\right).
\end{equation}

From Inequality \eqref{eq:Countupper4} (fixing, for instance, $\eta=\frac12$), and Inequality \eqref{eq:upper-counting}, we get, for all $x$, 
\begin{equation*}
	N_\Omega^h\left(x\right) \le C\left(V\left(C\left(x_++\frac{H}{t_+}+H^2\right)\right)^{\frac{n}{2}}+\rho^n+\left(\frac{V^{\frac1n}}{t_+}\right)^{n}\right),
\end{equation*}
where $x_+=\max\{0,x\}$\footnote{We give this definition to deal with the case where $x<0$, which is relevant since the Robin Laplacian has negative eigenvalues in general. }. Setting $y_{+}:=V^{\frac1n}x_+^{\frac12}$, using repeatedly Remark \ref{rem:ineqs} part \eqref{eq:7} with $p=\frac{n}{2}$ and part \eqref{eq:3} with $p=2$, we get
\begin{align*}
	N_\Omega^h\left(x\right) &\le C\left(y_{+}^n+ \left(\frac{V^{\frac2n}H}{t_+}\right)^{\frac{n}2}+(V^{\frac1n}H)^n+\rho^n+\left(\frac{V^{\frac1n}}{t_+}\right)^{n}\right)\\
&\le 	C\left(y_{+}^n+ \left(\left(\frac{V^{\frac1n}}{t_+}\right)^2+(V^{\frac1n}H)^2\right)^{\frac{n}2}+(V^{\frac1n}H)^n+\rho^n+\left(\frac{V^{\frac1n}}{t_+}\right)^{n}\right)\\
&\le 	C\left(y_{+}^n+ \left(\frac{V^{\frac1n}}{t_+}\right)^n+(V^{\frac1n}H)^n+(V^{\frac1n}H)^n+\rho^n+\left(\frac{V^{\frac1n}}{t_+}\right)^{n}\right).
\end{align*}
Finally, we obtain
\begin{equation}\label{eq:robin-upper}
	N_\Omega^h\left(x\right)\le C\left(y_{+}^n+(V^{\frac1n}H)^n+\rho^n+\left(\frac{V^{\frac1n}}{t_+}\right)^{n}\right).
\end{equation}

We now denote the largest Courant-sharp Robin eigenvalue by $\mu=\mu_k(\Omega,h)$. Then, the number of Courant-sharp Robin eigenvalues (counted with multiplicities) is at most $k$. For $\eps>0$ (using the notation $\mu_+=\max\{\mu,0\}$ and $\xi=V^{\frac1n}\mu_+^{\frac12}$), we have
\[k\le N_\Omega^h(\mu_++\eps)\le C\left((\mu_++\eps)^{\frac{n}{2}}V+(V^{\frac1n}H)^n+\rho^n+\left(\frac{V^{\frac1n}}{t_+}\right)^{n}\right),\] 
so that, taking $\eps\to 0^+$, we get
\begin{equation}
k\le C \left(\xi^n+(V^{\frac1n}H)^n+\rho^n+\left(\frac{V^{\frac1n}}{t_+}\right)^{n}\right). \label{eq:Countupper5}
\end{equation}

We now use Theorem \ref{thm:eig}, with the conclusion written as inequality \eqref{eq:xi-upper}, to obtain
 \begin{equation}
 	\xi\le C \left(\frac{V^{\frac2n}}{t_+^2}+\rho^2+V^{\frac2n}H^2\right), \label{eq:xi-upper2}
 \end{equation}
 where we used the fact that $\delta_1=\delta_0=t_+$ since $\Omega$ is convex.
 Substituting Inequality \eqref{eq:xi-upper2} for $\xi$ in Inequality \eqref{eq:Countupper5} and using Remark \ref{rem:ineqs} part \eqref{eq:7} with $p=n$ repeatedly, we get
 \begin{equation}
 k\le C \left(\left(\frac{V^{\frac1n}}{t_+}\right)^{2n}+\rho^{2n}+(V^{\frac1n}H)^{2n}+(V^{\frac1n}H)^{n}+\rho^n+\left(\frac{V^{\frac1n}}{t_+}\right)^{n}\right). \label{eq:Countupper6}
 \end{equation}
To complete the proof, we note that $\rho\ge n\omega_n^{\frac1n}$ (from the isoperimetric inequality), so that 

\begin{align*}
	\rho^n&\le C \rho^{2n},\\
	(V^{\frac1n}H)^n &\le C\,\rho^n (V^{\frac1n}H)^{n} \le C \left(\rho^{2n}+(V^{\frac1n}H)^{2n}\right),\\
	\left(\frac{V^{\frac1n}}{t_+}\right)^{n} &\le C \rho^n \left(\frac{V^{\frac1n}}{t_+}\right)^{n} \le C \left(\rho^{2n}+\left(\frac{V^{\frac1n}}{t_+}\right)^{2n}\right).
\end{align*} 
Substituting these bounds for the $n$-th powers in Inequality \eqref{eq:Countupper6}, we conclude that
  \[k\le C \left(\left(\frac{V^{\frac1n}}{t_+}\right)^{2n}+\rho^{2n}+(V^{\frac1n}H)^{2n}\right).\]

\end{proof}

\begin{rem}
    We note that the bounds derived in the proof of Theorem \ref{thm:eig} implicitly contain a bound on the number of negative Robin eigenvalues of $\Omega$. Indeed, for all $x<0$, $x_+=0$ and therefore $y_{+}=0$. This means that for negative values of $x$, taking $y_{+}=0$ in the right-hand side of \eqref{eq:robin-upper} gives an upper bound for the number of negative Robin eigenvalues, $N^h_{\Omega}(x)$, which is explicit in terms of $H$ and some of the geometric quantities of $\Omega$.
\end{rem}

\appendix
\section{Proof of Proposition \ref{prop:fkq}}
The idea behind the quantitative version of the Faber-Krahn inequality for the first Dirichlet eigenvalue (see \cite{FMP09} and the references therein) employs the classical method of comparing the Dirichlet energy of the eigenfunction with that of its radially symmetric, monotone rearrangement on a Euclidean ball, known as the Pólya-Szegö inequality, together with a quantitative version of the Euclidean isoperimetric inequality. The proof of Proposition \ref{prop:fkq} follows a similar line of reasoning. We first establish a quantitative version of the isoperimetric inequality in terms of the perimeter of the interior boundary of `small' domains in  ${\Omega}$ and a modified Fraenkel asymmetry (see Proposition \ref{prop:isoperimetricinq}). We follow the proof of the (non-quantitative) isoperimetric inequality in \cite{DFV}, incorporating the quantitative isoperimetric inequality and applying it to part of the domain distant from the boundary. It in turn gives an improvement in the Pólya-Szegö inequality. Then we follow the proof in \cite{DFV} while adapting the approach in \cite{FMP09} to obtain a quantitative version of the Faber-Krahn inequality in the setting of Proposition \ref{prop:fkq}. While the method of the proof is based on the same ideas as in \cite{FMP09,DFV},  the presence of a modified Fraenkel asymmetry and a modification of the eigenfunction in the Pólya-Szegö Inequality pose some challenges and require adaptation at each step of the proof.  \\

We first start with some definitions and notation. Throughout this section, we use $C_i=C_i(n)$ to denote constants depending only on $n$, and for the rest of the constants we mainly use  $c_i=c_i(\cdot)$.\\

For any $f\in L^1_{\loc}(\R^n)$ and $U\subset \R^n$ open, we define
\[|\bD f|(U):=\inf\left\{\liminf_{n\to\infty}\int_U|\nabla f_n|: f_n\in\Lip_{\loc}(U),~ \text{$f_n\to f$ in $L^1_{\loc}(U)$}  \right\}.\]
For any Borel set $A \subseteq {\R^n}$, we set
 \[|\bD f|(A):=\inf\left\{|\bD f|(U): A\subset U,\text{~ and $U$ is open in $\R^n$}\right\}.\] Subsequently for any two Borel sets $A$ and $E$ we define $\per(E,A)=|\bD \chi_E|(A)$. We denote $\per(E,\R^n)$ by $\per(E)$ or $|\partial E|$.  We say that $E$ is of finite perimeter when $\per(E)<\infty$.

 Let $\Omega$ be an open Lipschitz domain in $\mathbb{R}^n$. 
 Note that $\per(E,\Omega)$ gives the interior perimeter of $E$ in $\Omega$.  
 For simplicity, we use the following notation:
 \[\per(E,\Omega)=|\partial E\cap \Omega|.\]
 
De Ponti, Farinelli, and Violo \cite[Theorem 4.1]{DFV} proved an almost sharp isoperimetric inequality for any Borel set $E$ in ${\Omega}$ with `\textit{small}' volume having its interior boundary $\per(E,\Omega)$  instead of $\per(E)$ in the inequality. We shall see that with some modification of their proof, we get a quantitative version.  It is an interesting question whether the quantitative version we obtain for the Lipschitz domains can be generalised to the general setting of the PI spaces considered in \cite{DFV}.

\begin{prop}\label{prop:isoperimetricinq}
    Let {$\Omega \subset \R^n$} be an open, bounded, Lipschitz domain. For any $\epsilon,\delta\in(0,1)$  there exist  a neighbourhood $\Omega_\delta$ of $\partial \Omega$ with $|\Omega_\delta|<\delta$ and positive constants $C_1=C_1(n)$, $\alpha=\alpha(\Omega, n,\epsilon,\delta)$ and $\beta=\beta(\epsilon)$ such that  for every Borel set $E\subset \Omega$ with

\begin{equation}\label{eq:smallv}
0 < |E| \leq \alpha, \qquad \frac{|E \cap \Omega_\delta|}{|E|} \leq \beta,
\end{equation}
 we have
\begin{equation}
|\partial E \cap \Omega| \geq (1-\epsilon) \left(1 + C_1\tilde A(E)^2 \right) n\omega_n^{1/n} |E|^{\frac{n-1}{n}}
\end{equation}
where $\tilde A(E):=\inf_U A(E\cap U)$, where the infimum is taken over all open sets $U\subseteq\Omega$ such that $\Omega\setminus\Omega_\delta\subset U$.

\end{prop}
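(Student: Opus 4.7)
The plan is to adapt the proof of the non-quantitative inequality in \cite{DFV} by inserting the Fusco--Maggi--Pratelli quantitative isoperimetric inequality at its central step, applied to the bulk part of $E$ lying away from $\partial\Omega$. The modified Fraenkel asymmetry enters naturally because the quantitative improvement is measured on this bulk piece rather than on $E$ itself.

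First, using that $\Omega$ is Lipschitz, I would fix a small $\delta>0$ and construct an open neighbourhood $\Omega_\delta$ of $\partial\Omega$ in $\Omega$ with $|\Omega_\delta|<\delta$, together with a Lipschitz function $\phi:\bOm\to[0,\infty)$ (for example a truncation of $\dist(\cdot,\partial\Omega)$) that vanishes on $\partial\Omega$, has $\|\nabla\phi\|_\infty\le L=L(\Omega,\delta)$, and satisfies $\phi\ge s_1$ on $\Omega\setminus\Omega_\delta$ for some fixed $0<s_0<s_1$. The coarea formula then yields
\[\int_{s_0}^{s_1}|\partial\{\phi>s\}\cap E|\,ds\le\int_E|\nabla\phi|\,dx\le L|E|,\]
so there exists $s\in[s_0,s_1]$ for which the open set $U:=\{\phi>s\}$ satisfies $\Omega\setminus\Omega_\delta\subset U\subset\Omega$ and $|\partial U\cap E|\le L|E|/(s_1-s_0)$.

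Next, I would apply the Fusco--Maggi--Pratelli quantitative isoperimetric inequality to $E\cap U$ as a set of finite perimeter in $\R^n$:
\[\per(E\cap U)\ge (1+C_0 A(E\cap U)^2)\,n\omega_n^{1/n}\,|E\cap U|^{(n-1)/n},\]
where $C_0=C_0(n)$ and $A$ is the standard Fraenkel asymmetry. Since $\Omega\setminus\Omega_\delta\subset U\subset\Omega$, the definition of $\tilde A(E)$ forces $A(E\cap U)\ge\tilde A(E)$, so the right-hand side is at least $(1+C_0\tilde A(E)^2)\,n\omega_n^{1/n}\,|E\cap U|^{(n-1)/n}$. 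On the other hand, a standard splitting of the perimeter of the intersection gives
\[\per(E\cap U)\le |\partial E\cap\Omega|+|\partial U\cap E|\le |\partial E\cap\Omega|+\frac{L|E|}{s_1-s_0}.\]

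Combining these estimates with the volume bound $|E\cap U|\ge |E|-|E\cap\Omega_\delta|\ge(1-\beta)|E|$ gives
\[|\partial E\cap\Omega|\ge(1-\beta)^{(n-1)/n}(1+C_0\tilde A(E)^2)\,n\omega_n^{1/n}|E|^{(n-1)/n}-\frac{L|E|}{s_1-s_0}.\]
Since the error term is of order $|E|^{1/n}$ relative to the main term, imposing $|E|\le\alpha$ with $\alpha=\alpha(\Omega,n,\epsilon,\delta)$ sufficiently small absorbs it into a factor $(1-\epsilon/2)$; choosing $\beta=\beta(\epsilon)$ so that $(1-\beta)^{(n-1)/n}\ge 1-\epsilon/2$ then yields the desired inequality with $C_1=C_1(n)$. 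The principal obstacle is setting up $\phi$ and $\Omega_\delta$ on a general Lipschitz domain so that the coarea estimate and the choice of $s_0,s_1$ produce clean, tractable dependences of the constants on $(\Omega,n,\epsilon,\delta)$ in the prescribed form; once that machinery is in place, the remainder is careful bookkeeping of error terms, mirroring \cite{DFV} with FMP replacing the standard Euclidean isoperimetric inequality.
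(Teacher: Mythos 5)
Your proposal follows essentially the same route as the paper's proof: there $\Omega_\delta=\Omega\setminus U_{t_0}$ with $U_t=\{d(\cdot,\partial\Omega)>t\}$, the coarea formula for the distance function plus a Markov/mean-value argument selects a level $t_1\in(t_0/2,t_0)$ with $\per(U_{t_1},E)\le 3|E|/t_0$, the quantitative isoperimetric inequality of \cite{FMP08} is applied to $E\cap U_{t_1}$ (with $A(E\cap U_{t_1})\ge\tilde A(E)$ because $U_{t_1}$ is admissible in the infimum), and the error term of order $|E|$ and the volume deficit $|E\cap\Omega_\delta|$ are absorbed by choosing $\alpha$ and $\beta$ exactly as you do.

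The one step you should tighten is the ``standard splitting'' $\per(E\cap U)\le|\partial E\cap\Omega|+|\partial U\cap E|$. If $|\partial U\cap E|$ is read literally as $\cH^{n-1}(\partial^* U\cap E)$, this can fail for a general Borel set $E$ (the class allowed in the statement): take $U$ a ball $B$ compactly contained in $\Omega$ and $E=\Omega\setminus\partial B$; then $\per(E\cap U)=\cH^{n-1}(\partial B)>0$ while both terms on the right vanish. Two fixes are available. In the Euclidean setting, use the standard formula for the perimeter of an intersection with the set of density-one points $E^{(1)}$ in the cross term: since $\overline U\subset\Omega$, the overlap contribution coming from the portion of $\partial^* E\cap\partial^* U$ where the normals agree lies inside $\partial^* E\cap\Omega$ and is absorbed into $|\partial E\cap\Omega|$, giving $\per(E\cap U)\le|\partial E\cap\Omega|+\per(U,E^{(1)})$; your coarea selection controls $\per(\{\phi>s\},E^{(1)})$ just as well, because $|E\,\Delta\,E^{(1)}|=0$. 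The paper instead observes that the perimeter measure of $E$ can charge at most countably many level sets, chooses $t_1$ with $\per(E,\partial U_{t_1})=0$ following \cite{Ant}, and then applies \cite[Proposition 2.6]{Ant2}; this avoids the Euclidean fine structure theory and stays closer to the framework of \cite{DFV}. With either fix your argument is complete and coincides with the paper's, including the dependence of $\alpha$ on $(\Omega,n,\epsilon,\delta)$ through the width of the tubular neighbourhood and of $\beta$ only on $\epsilon$.
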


\begin{proof}Let $U_t=\{x\in\bOm: d(x,\partial\Omega)>t\}$. Take $t_0>0$ such that $\Omega_\delta:=\Omega\setminus U_{t_0}$ has volume less than $\delta$. We first show that for any Borel set $E\subset\bOm$, there exists $t_1\in(0,t_0)$ such that
\[|\partial (E\cap U_{t_1})|\le |\partial E\cap \Omega|+c_1|E|,\]
where $c_1=c_1(\Omega,\delta)$ is a positive constant.

Note that the function $d(\cdot,\partial\Omega)\in \Lip(\bOm)$. By the co-area formula \cite[Theorem 4.2]{Mir}, we have
\[\int_{t_0/2}^{t_0}\per(U_t,E)dt\le\int_{0}^\infty\per(U_t,E)dt=\int_E|\nabla d|\le|E|.\] 
By Markov's inequality, 
\begin{align*}\left|\left\{t\in (\frac{t_0}{2},t_0): \per(U_t,E)\ge\frac{3|E|}{t_0} \right\}\right|&\le\frac{t_0}{3|E|}\int_{t_0/2}^{t_0}\per(U_t,E)dt\le\frac{t_0}{3}.\end{align*}
Hence, the set 
$$T:=\left\{t\in (\frac{t_0}{2},t_0): \per(U_t,E)<\frac{3|E|}{t_0} \right\}$$
 has non-zero measure. Therefore, there exists $t_1\in T$ such that $\per(E,\partial U_{t_1})=0$  following the same argument as in \cite[Corollary 2.6]{Ant}. Here, $t_0$ and $t_1$ depend only on $\Omega$ and $\delta$. Moreover, since  $\per(E,\partial U_{t_1})=0$, we can apply  \cite[Proposition 2.6]{Ant2} to get 
\begin{align*}|\partial(E\cap U_{t_1})|&=\per(E\cap U_{t_1})\le \per(E,U_{t_1})+\per(U_{t_1},E^\circ)\\
&\le \per(E,\Omega)+\frac{3|E|}{t_0},
\end{align*}
where $E^\circ$ denotes the interior of $E$.\\
Since $E\cap U_{t_1}\subset\Omega$, we can use the quantitative version of the isoperimetric inequality \cite{FMP08}:
\[|\partial(E\cap U_{t_1})|\ge (1+{C_1}A(E\cap U_{t_1})^2)n\omega_n^{1/n}|E\cap U_{t_1}|^{\frac{n-1}{n}}.\]

Combining the above inequalities, we obtain
\begin{align*}
    |\partial E\cap \Omega|=\per(E,\Omega)&\ge(1+C_1A(E\cap U_{t_1})^2)n\omega_n^{1/n}|E\cap U_{t_1}|^{\frac{n-1}{n}}-\frac{3|E|}{t_0}\\
    &\ge(1+C_1A(E\cap U_{t_1})^2)n\omega_n^{1/n}|E\setminus \Omega_\delta|^{\frac{n-1}{n}}-\frac{3|E|}{t_0}\\
    &\ge(1+C_1A(E\cap U_{t_1})^2)n\omega_n^{1/n}|E|^{\frac{n-1}{n}}\left(1-\left(\frac{|E\cap\Omega_\delta|}{|E|}\right)^{\frac{n-1}{n}}-\frac{3|E|^{1/n}}{t_0 n\omega_n^{1/n}}\right).
\end{align*}
Hence, it is enough to have 
$$\left(\frac{|E\cap\Omega_\delta|}{|E|}\right)^{\frac{n-1}{n}}\le \frac{\epsilon}{2},\qquad\frac{3|E|^{1/n}}{t_0 n\omega_n^{1/n}}\le\frac{\epsilon}{2}.$$
We conclude by taking $\beta=\left(\frac{\epsilon}{2}\right)^{\frac{n}{n-1}}
$ and  $\alpha=\frac{t_0^n}{6^n}n^n\omega_n\epsilon ^{n}$.

\end{proof}

Another main ingredient of the proof is a generalization of the P\'olya-Szeg\"o inequality.
The classical  P\'olya-Szeg\"o inequality is stated for functions in $H^1_0(U)$ where $U$ is a bounded open set in $\R^n$. Recently the P\'olya-Szeg\"o inequality has been extended to the setting of metric-measure spaces \cite{MS20,NV22,DFV}. Its main difference with the classical version is that we can view $\bOm$ as a complete metric-measure space and compactly supported functions in $\bOm$ can be nonzero on the boundary of $\Omega$.  Without any extra condition, the P\'olya-Szeg\"o inequality fails in this general setting.  However, it holds true on subdomains of $\bOm$ with small volume. In \cite[Theorem 2.22]{DFV}, it is stated that {it holds} for open sets in a bounded \textit{PI  space}. We do not need to know the definition of a PI space. We use \cite[Theorem 3.9]{DFV}, where it is shown that a bounded Lipschitz domain $\bOm$ is a bounded PI space. Hence, we state Lemma~\ref{lemma:pz} below, which is a version of \cite[Theorem 2.22]{DFV}, only for Lipschitz domains. Let us first introduce some notation. 

Let $\Omega$ be an open, bounded, Lipschitz domain in $\R^n$. Let $U\subsetneq\bOm$ be an open set in $\bOm$. For a non-negative Borel function $u$ on $U$ we define 
$$\mu(t)=|\{u>t\}|.$$ 
We denote by $U^*$ the ball centred at the origin having the same volume as $U$, and $u^*:U^*\to[0,\infty)$ the symmetric decreasing rearrangement of $u$. By abuse of notation, we use $\Lip_c(U)$ to denote the space of Lipschitz functions compactly supported in $U$ with induced topology from $\bOm$.
\begin{lemma}[P\'olya-Szeg\"o inequality]\label{lemma:pz}Let $\Omega$ be an open, bounded, Lipschitz domain in $\R^n$. There exists a constant $c=c(\Omega)$ such that for any open set $U\subsetneq\bOm$ in $\bOm$ with $|U|\le c$, 
any  $0\neq u\in \Lip_c(U)$ non-negative with $|\nabla u|\neq0$ a.e. in $\{u>0\}$, and any $0<s\le T:=\sup_{U} u$ we have $u^*\in \Lip_c(U^*)$ and
\begin{equation*}
     \int_{\{u\le s\}}|\nabla  u|^2- \int_{\{u^*\le s\}}|\nabla  u^*|^2\ge \int_{0}^{s}\frac{\per(\{u>t\},\Omega)^2-\per(\{u^*>t\})^2}{|\mu'(t)|}.
\end{equation*}

\end{lemma}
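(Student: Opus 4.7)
The plan is to derive this refined P\'olya-Szeg\"o inequality by expressing both integrals via the co-area formula, applying Cauchy-Schwarz on each level set, and exploiting the fact that Cauchy-Schwarz is an equality for $u^*$ because $|\nabla u^*|$ is constant on its level sets.

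First, I would establish $u^*\in\Lip_c(U^*)$. Since $u\in\Lip_c(U)$ vanishes in some $\bOm$-neighborhood of $\bOm\setminus U$, one has $|\{u>0\}|<|U|$, so $\supp u^*$ is strictly contained in $U^*$. For the Lipschitz regularity, setting $r(t):=(\mu(t)/\omega_n)^{1/n}$ so that $\{u^*>t\}=B(0,r(t))$, differentiating $\omega_n r(t)^n=\mu(t)$ yields $|\nabla u^*|=\per(\{u^*>t\})/|\mu'(t)|$ on the level set $\{u^*=t\}$. The non-quantitative small-volume isoperimetric inequality that underlies Proposition \ref{prop:isoperimetricinq} gives $\per(\{u>t\},\Omega)\ge (1-\epsilon)\per(\{u^*>t\})$ once $|U|\le c$ is small enough, while the co-area formula gives $|\mu'(t)|\ge\per(\{u>t\},\Omega)/\|\nabla u\|_\infty$. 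Combining these bounds uniformly bounds $|\nabla u^*|$.

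Next, using $|\nabla u|=0$ a.e.\ on $\{u=0\}$ (and likewise for $u^*$), the co-area formula gives
\[\int_{\{u\le s\}}|\nabla u|^2\,dx=\int_0^s\int_{\{u=t\}\cap\Omega}|\nabla u|\,d\mathcal{H}^{n-1}\,dt,\]
with the analogous identity for $u^*$. For a.e.\ $t\in(0,s)$, since $|\nabla u|\neq 0$ a.e.\ on $\{u=t\}\cap\Omega$, Cauchy-Schwarz applied with $|\nabla u|^{1/2}$ and $|\nabla u|^{-1/2}$ gives
\[\per(\{u>t\},\Omega)^2=\bigl(\mathcal{H}^{n-1}(\{u=t\}\cap\Omega)\bigr)^2\le\Bigl(\int_{\{u=t\}\cap\Omega}|\nabla u|\,d\mathcal{H}^{n-1}\Bigr)\,|\mu'(t)|,\]
using the co-area identity $|\mu'(t)|=\int_{\{u=t\}\cap\Omega}|\nabla u|^{-1}\,d\mathcal{H}^{n-1}$. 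Hence the inner level-set integral is at least $\per(\{u>t\},\Omega)^2/|\mu'(t)|$. For $u^*$, because $|\nabla u^*|$ is constant on $\{u^*=t\}$ and $\mu$ coincides with the distribution function of $u^*$, Cauchy-Schwarz becomes an equality and $\int_{\{u^*=t\}}|\nabla u^*|\,d\mathcal{H}^{n-1}=\per(\{u^*>t\})^2/|\mu'(t)|$. Subtracting level by level and integrating in $t$ over $(0,s)$ yields the claim.

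The main obstacle is the regularity bookkeeping: justifying the absolute continuity of $\mu$ and the pointwise identity $|\mu'(t)|=\int_{\{u=t\}\cap\Omega}|\nabla u|^{-1}\,d\mathcal{H}^{n-1}$ (which follows from the co-area formula together with the hypothesis that $|\nabla u|\neq 0$ a.e.\ on $\{u>0\}$), and verifying that $u^*$ is genuinely Lipschitz so that co-area applies equally to $u^*$. Both rely on careful use of the co-area formula and the small-volume isoperimetric inequality in $\bOm$, which is precisely where the smallness hypothesis $|U|\le c$ enters.
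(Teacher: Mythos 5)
Your route is essentially the paper's, modulo the fact that the paper imports the two nontrivial ingredients rather than rederiving them: it verifies the small-volume lower bound $\per(E,\Omega)\ge c_1|E|^{\frac{n-1}{n}}$ for Borel sets $E\subset U$ with $|U|\le c(\Omega)$ (via \cite[Lemma 3.7 and Theorem 2.18]{DFV}), then cites \cite[Theorem 2.22]{DFV} for $u^*\in\Lip_c(U^*)$ and for the inequality $\int_{\{u\le s\}}|\nabla u|^2\ge\int_0^s\per(\{u>t\},\Omega)^2/(-\mu'(t))\,dt$, cites \cite[Lemma 2.25]{NV22} for $-\mu'(t)=\int_{\{u=t\}}\frac{d\cH^{n-1}}{|\nabla u|}=\int_{\{u^*=t\}}\frac{d\cH^{n-1}}{|\nabla u^*|}$, and computes the $u^*$-side exactly using that $|\nabla u^*|$ is constant on level sets. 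Your coarea-plus-Cauchy--Schwarz derivation, with equality for the rearrangement, is precisely the content of those citations, so the mechanism coincides; your version is simply self-contained in the Euclidean setting.

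One step is wrong as written, though it is fixable. You invoke a sharp comparison $\per(\{u>t\},\Omega)\ge(1-\epsilon)\per(\{u^*>t\})$ under the sole hypothesis that $|U|$ is small. That inequality is not available here: nothing in the lemma prevents the level sets $\{u>t\}$ from concentrating against $\partial\Omega$ (functions in $\Lip_c(U)$, $U$ open in $\bOm$, need not vanish on $\partial\Omega$), and for a set hugging a flat piece of the boundary the relative perimeter in $\Omega$ falls a definite factor below $n\omega_n^{1/n}|E|^{\frac{n-1}{n}}$ — this is exactly why Proposition \ref{prop:isoperimetricinq} and Lemma \ref{prop:DFV}(b) carry the extra hypothesis $|E\cap\Omega_\delta|/|E|\le\beta$, which you do not have for the sets $\{u>t\}$. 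The repair is what the paper's proof actually uses: only the non-sharp small-volume bound $\per(E,\Omega)\ge c_1(\Omega,n)|E|^{\frac{n-1}{n}}$ is needed (this is where the constant $c=c(\Omega)$ in the statement comes from), and it suffices to bound $|\nabla u^*|=\per(\{u^*>t\})/|\mu'(t)|$ uniformly, at the cost of a worse but still finite Lipschitz constant for $u^*$, which is all the lemma claims. With that replacement — and with the absolute continuity of $\mu$ (which, as you note, follows from the coarea formula and $|\nabla u|\neq0$ a.e.\ on $\{u>0\}$, and is what upgrades the a.e.\ bound on $|\nabla u^*|$ to a genuine Lipschitz bound) — your argument is sound and matches the paper's.
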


\begin{proof} By \cite[Lemma 3.7]{DFV}, $\bOm$ satisfies the assumption of \cite[Theorem 2.18]{DFV}. Thus there exist constants $c_0=c_0(\Omega)>0$ and $c_1=c_1(\Omega,n)$ such that for any $U$ with $|U|\le c_0$ and any Borel set $E\subset U$ we have
\[\per(E, {\Omega})=|\partial E\cap \Omega|\ge c_1|E|^{\frac{n-1}{n}}.\] 
Hence, the assumption of \cite[Theorem 2.22]{DFV} is met. Therefore, $u^*\in \Lip_c(U^*)$. Moreover, for  any $s\in (0,T)$, we have (see e.g. \cite[Lemma 2.25]{NV22}) $$-\mu'(t)=\int_{\{u^*=t\}}\frac{d\cH^{n-1}}{|\nabla u^*|}= \int_{\{u=t\}}\frac{d\cH^{n-1}}{|\nabla u|},\quad a.e.$$ and as a result (see \cite[Inequality (2.29)]{DFV})

\begin{align}\label{eq:pz1inq}
 \int_{\{u\le s\}}|\nabla  u|^2   & \ge\int_{0}^{s }\frac{\per(\{u>t\},\Omega)^2}{-\mu'(t)}dt.
   \end{align}
Since $|\nabla u^*|$ is constant on $\{u^*=t\}$, we have  
  \begin{align}\label{eq:pz2inq}
  \int_{\{u^*\le s\}}|\nabla  u^*|^2
   &=\int_{0}^{s}dt\int_{\{u^*=t\}}|\nabla u^*|{d{\cH}^{n-1}} =\int_{0}^{s}\frac{\per(\{u^*>t\})^2}{-\mu'(t)}dt.
   \end{align}     
We obtain the result by taking the difference of \eqref{eq:pz1inq} and \eqref{eq:pz2inq}.
\end{proof}
We now use Proposition \ref{prop:isoperimetricinq}, and Lemma \ref{lemma:pz} to prove Proposition \ref{prop:fkq}. 
\begin{proof}[Proof of Proposition \ref{prop:fkq}]
    We choose $\vtheta_0=\vtheta_0(\Omega, \epsilon,\delta),\vtheta_1=\vtheta_1(\Omega, \epsilon)\in(0,1)$ small enough such that $\vtheta_0$ satisfies inequality \eqref{eq:delta0-condition1}  below, and $\vtheta_1$ satisfies a  set of inequalities \eqref{eq:delta_1condition 1},\eqref{eq:delta1-condition2},\eqref{eq:delta_1-condition3},\eqref{eq:delta1-condition4},\eqref{eq:delta1-condition5},\eqref{cond:delta1} below.
     Let $f:D\to \R$ be a $\lambda_1(D)$ eigenfunction, where $D$ satisfies the assumption of the proposition. We know that $f$ is strictly positive on $D$.
     
    The strategy is to use Proposition \ref{prop:isoperimetricinq} to estimate the right-hand side of the Polya-Szeg\"o inequality in Lemma \ref{lemma:pz}.   Hence, we need to ensure that the condition of Proposition \eqref{prop:isoperimetricinq} is met for the super-level sets $\{f>0\}$. For this reason, we restrict the range of $t$. We consider
    \[\tilde t=\sup\{t: |\{f>t\}|\ge 2\sqrt{\vtheta_1}|D|\}\]
    and define \[
    \tilde f=(f-\tilde t)^+,\qquad \hat f=\min\{f,\tilde t\}=:f\wedge\tilde t
    .\]
    Note that $|\{f>\tilde t\}|\le 2\sqrt{\vtheta_1}|D|$ and for any $t\in(0,\tilde t)$, we have $|\{f> t\}|\ge 2\sqrt{\vtheta_1}|D|$.
    We break down the proof into several steps.

    \noindent{\textbf{Step 1.}} We start by giving a lower bound for the Rayleigh quotient of $\tilde f$.
    From 
    \cite[Corollary 5.2]{DFV}, we know that there exists a constant $c_1=c_1(\Omega)$ such that for any $u\in W^{1,2}(\Omega)$ with $|\supp(u)|\le c_1$ we have 
    \[\frac{\int_\Omega|\nabla u|^2}{\int_\Omega u^2}\ge \frac{c_1}{|\supp(u)|^{2/n}}.\]
    We assume $\vtheta_1$ is chosen such that \begin{equation}\label{eq:delta_1condition 1}
    2\sqrt{\vtheta_1}|D|<c_1.\end{equation} Then for $\tilde f=(f-\tilde t)^+$ we have 
    \begin{equation}\label{eq:tildef}
        \frac{\int_\Omega|\nabla \tilde f|^2}{\int_\Omega \tilde f^2}\ge \frac{c_1}{( 2\sqrt{\vtheta_1}|D|)^{2/n}}.
    \end{equation}

    \noindent\textbf{Step 2.} We now estimate the Dirichlet energy of $\hat f$ {using the P\'olya-Szeg\"o inequality}. Up to scaling, we can assume $\int \hat f^2=1$. 
    
    Note that for any $t<\tilde t$, we have $|\{f>t\}|\ge 2\sqrt{\vtheta_1}|D|$. 
Hence,
  \begin{equation}\label{eq:ratio bound}
      \frac{|\{f>t\}\cap \Omega_\delta|}{|\{f>t\}|}\le \frac{|D\cap \Omega_\delta|}{2\sqrt{\vtheta_1}|D|}\le \frac{\vtheta_1|D|}{2\sqrt{\vtheta_1}|D|}=\sqrt{\vtheta_1}/2. \end{equation}
  We can assume $\vtheta_1$ is chosen so that it satisfies  \begin{equation}\label{eq:delta1-condition2}
      \sqrt{\vtheta_1}<\beta,
  \end{equation} where $\beta$ is as in Proposition \ref{prop:isoperimetricinq} with $\epsilon=\epsilon/2$. By assumption we also have $|\{f>t\}|\le|D|\le \vtheta_0$. We can assume that   \begin{equation}\label{eq:delta0-condition1}
      \vtheta_0\le \min\{\alpha,c(\Omega)\}
      \end{equation}
      where $\alpha$ is as in Proposition \ref{prop:isoperimetricinq}, again with $\epsilon=\epsilon/2$, and  $c=c(\Omega)$ is the constant in Lemma~\ref{lemma:pz}.
      Hence, by Proposition \ref{prop:isoperimetricinq}, for a.e. $t<\tilde t$,  we have 
  \begin{equation*}
\per(\{f>t\},\Omega)=|\partial \{f>t\} \cap \Omega| \geq (1-\frac{\epsilon}{2}) \left(1 + C_1\tilde A(\{f>t\})^2 \right) n\omega_n^{1/n} |\{f>t\}|^{\frac{n-1}{n}}.
\end{equation*}
 We can rewrite the above inequality as  
\begin{equation}\label{eq:iso}
\per(\{f>t\},\Omega)^2-(1-\epsilon)\per(\{f^*>t\})^2\geq C_2 (1-\epsilon)  \tilde A(\{f>t\})^2  \mu(t)^{\frac{2(n-1)}{n}},
\end{equation}
where $C_2=2C_1$. We now use Lemma \ref{lemma:pz}, which applies since $\vtheta_0\le c$, {identity \eqref{eq:pz2inq},} and inequality \eqref{eq:iso} to  obtain 

 \begin{equation}\label{eq: pzepsilon}
     \int_{\{f\le \tilde t\}}|\nabla  f|^2-(1-\epsilon) \int_{\{f^*\le \tilde t\}}|\nabla  f^*|^2\ge C_2(1-\epsilon)\int_{0}^{\tilde t}\frac{\tilde A(\{f>t\})^2\mu(t)^{\frac{2(n-1)}{n}}}{|\mu'(t)|}\, dt.   \end{equation}
 Here, $f^*:D^*\to \R$ is the monotone rearrangement of $f$, where $D^*$ is the Euclidean ball centred at the origin with the same volume as $D$.\\

 \noindent\textbf{Step 3.} Let us define $\eta(D):=\frac{\lambda_1(D)}{\lambda_1^D(D^*)}-1$. In this step, we relate the left-hand side of \eqref{eq: pzepsilon} to $\eta(D)$.
Observe that 
\begin{align*}
\int_{\{f^*\le \tilde t\}}|\nabla  f^*|^2&=\int_{D^*}|\nabla(f^*\wedge \tilde t)|^2\\
&\ge \lambda_1^D(D^*)\int_{D^*}(f^*\wedge \tilde t)^2\\
&=\lambda_1^D(D^*)\int_{D}\hat f^2\\
&=\lambda_1^D(D^*).
\end{align*}

On the other hand, we have 
\begin{align*}
    \lambda_1(D)&=\frac{\int_{D}|\nabla f|^2}{\int_D f^2}\\
    &\ge\frac{\int_{D}|\nabla \hat f|^2+\int_{D}|\nabla \tilde f|^2}{1+\int_D\tilde f^2+2\left(\int_D\tilde f^2\right)^{\frac{1}{2}}}\\
\text{\small by inequality \eqref{eq:tildef}~~}    &\ge \frac{\int_{D}|\nabla \hat f|^2+c_1(2\sqrt{\vtheta_1}|D|)^{-\frac{2}{n}}\int_{D}\tilde f^2}{1+\int_D\tilde f^2+2\left(\int_D\tilde f^2\right)^{\frac{1}{2}}}\\
  &\ge\frac{\int_{D}|\nabla \hat f|^2}{1+c_1^{-1}(2\sqrt{\vtheta_1}|D|)^{\frac{2}{n}}{\int_{D}|\nabla \hat f|^2}},
\end{align*}
where in the last inequality, we minimize over possible values of $\int_D\tilde f^2$ as in \cite[Proof of Theorem 5.3]{DFV}. By rearranging, we get
\begin{align*}
\left(1-c_1^{-1}(2\sqrt{\vtheta_1}|D|)^{\frac{2}{n}}\lambda_1(D)\right)\int_{D}|\nabla \hat f|^2\le\lambda_1(D).
\end{align*}
By the assumption, $\lambda_1(D)|D|^{2/n}\le C_0$. Hence, we can assume $\vtheta_1$ is small enough such that 
\begin{equation}\label{eq:delta_1-condition3}
C_0c_1^{-1}(2\sqrt{\vtheta_1})^{\frac{2}{n}}\le\epsilon.\end{equation} Therefore,
\[\int_{D}|\nabla \hat f|^2\le\frac{\lambda_1(D)}{1-\epsilon}.\]
In summary, we get
\begin{equation}\label{eq:main}
     C_2(1-2\epsilon)\int_{0}^{\tilde t}\frac{\tilde A(\{f>t\})^2\mu(t)^{\frac{2(n-1)}{n}}}{|\mu'(t)|}\le \lambda_1^D(D^*)\left(\eta(D)+2\epsilon\right).
     \end{equation}
     Note that we use the trivial inequality $(1-\epsilon)^2\ge 1-2\epsilon$.\\
     
  The remaining part of the proof follows the argument in~\cite{FMP09} for the proof of a quantitative version of the Faber-Krahn inequality. The goal is to obtain a lower bound for $\eta(D)$ in terms of $\tilde A(D)$.\\
  
  \noindent\textbf{Step 4.} {In this step, we establish} a relation between $\tilde A(D)$ and $\tilde A(\{f>t\})$ as in \cite{FMP09}. More precisely, we show that there is a positive constant $C_3=C_3(n)$ such that for any $t\in (0,\tilde t)$ we have
\begin{equation}\label{eq:FMP 1.6}
     \tilde A(D)\le C_3(t\sqrt{|D|}+\eta(D)+\tilde A(\{f>t\})+\epsilon).
 \end{equation}
{\ }\\

 Since $\tilde A(D)\le 2$, inequality \eqref{eq:FMP 1.6} holds for $t>\frac{1}{4}|D|^{-1/2}$ and for any constant $C_3\ge 8$, see~\cite{FMP09}. Thus we assume $t\le\frac{1}{4}{|D|}^{-1/2}$.

 Let $B\subset\mathbb R^n$ be a ball centered at the origin with $|B|=|D\cap U|$. {W.l.o.g., we can also assume $|D\setminus U|\le |D\cap U|$. Thus 
 \begin{equation}\label{eq:volD}
 |B|\le |D|\le 2|B|.
 \end{equation}} For any $x_0\in \R^n$, following the same line of argument as on \cite[Page 59-60]{FMP09}, we have
  \begin{align*}
     |B|A(D\cap U)&\le|(D\cap U)\Delta(x_0+B)|=2|(x_0+B)\setminus(D\cap U)|\\
     &\le 2|(x_0+B)\setminus(\{f>t\}\cap U)|\\
     &\le 2|(x_0+B\cap\{f^*\le s\})\setminus(\{f>t\}\cap U)|+2|(x_0+\{f^*>s\})\setminus(\{f>t\}\cap U)|\\
     &\le 2\left(|B\cap\{f^*\le s\}|+|(x_0+\{f^*>s\})\setminus(\{f>t\}\cap U)|\right)\\
     &\le 2\left(|B\cap\{f^*\le s\}|+|(x_0+\{f^*>s\})\Delta(\{f>t\}\cap U)|\right)
 \end{align*}
 where {$s\ge t$} is chosen such that $|\{f^*>s\}|=|\{f>t\}\cap U|$.
 Thus, optimizing over  $x_0$ and then taking the infimum over all such $U$, we get
 \[ \tilde A(D)\le2\left(\frac{| B\cap\{f^*\le s\}|}{|B|}+ \tilde A(\{f>t\})\right).\]

Let us note that, to define $s$, we have implicitly relied on the assumption that the distribution function $\mu$ is continuous. This assumption is satisfied here. Indeed, for any $\tau>0$, we have
\[\lim_{\sigma\to\tau^-}\mu(\sigma)-\mu(\tau)=|\{f\ge\tau\}|-|\{f>\tau\}|=|\{f=\tau\}|,\]
so $\mu$ is continuous if and only if all the level sets of $f$ have measure zero. Since $f$ solves the elliptic partial differential equation $\left(\Delta-\lambda_1(D)\right)f=0$, it is real-analytic in $D$. Furthermore, since $\lambda_1(D)>0$ and $f>0$ in $D$, $f$ is nowhere constant in $D$. We conclude using the known result that the level sets of a non-constant real-analytic function in a connected open set have measure zero (see for instance \cite{Mit2020ZeroSet} for a short self-contained proof). Let us also note that while the analyticity of $f$ would not hold (and would in fact not be meaningful) if $\Omega$ was a domain in an $n$-dimensional Riemannian manifold rather than in $\R^n$,  the level sets would still have measure zero. This can be proved, for example, by following the approach in \cite{Mit2020ZeroSet}, using the unique continuation property for elliptic PDEs instead of analyticity.
 
We first establish an upper bound for $s$ in terms of $t$.\\

\noindent\textit{Claim.} There exists a positive constant $c_2=c_2(\Omega,C_0,\epsilon)$ such that for  \begin{equation}\label{eq:delta1-condition4}
     \vtheta_1\le c_2,
 \end{equation} we have $$\sqrt{|D|}(s-t)\le \epsilon.$$

\begin{proof}[Proof of the Claim]We first show that  $\mu(s)\in\left(\left(1-\frac{\sqrt{\vtheta_1}}{2}\right)\mu(t), \mu(t)\right) $. Indeed,
 \begin{align*}
  \mu(t)\ge \mu(s)=  |\{f^*> s\}|&=|\{f>t\}\cap U|\\
    &\ge |\{f>t\}\cap \Omega_\delta^c|\\
    &= |\{f>t\}|-|\{f>t\}\cap \Omega_\delta|\\
 \text{\small by \eqref{eq:ratio bound}} \hspace{3mm}  &\ge \left(1-\frac{\sqrt{\vtheta_1}}{2}\right)|\{f>t\}|\\
 &= \left(1-\frac{\sqrt{\vtheta_1}}{2}\right)\mu(t).
 \end{align*}
We note that
\begin{equation}\label{eq:mainestimate}
(s-t)\essinf_{\tau\in(t,s)}(-\mu'(\tau))\le-\int_t^s \mu'(\tau)\,d\tau = \mu(t) - \mu(s).
\end{equation}
Hence, it is enough to find a lower bound for  $\essinf_{\tau\in(t,s)}(-\mu'(\tau))$. The following is true for a.e. $\tau\in (t,s)$.
\[
-\mu'(\tau) = \int_{\{f=\tau\}} \frac{d\cH^{n-1}}{|\nabla f|}
\geq \frac{(\mathcal{H}^{n-1}(\{f=\tau\}))^2}{\int_{\{f=\tau\}} |\nabla f|},
\]
where the last inequality follows from the Cauchy–Schwarz inequality.
Note that $\mu(\tau)\le \theta_0$ and 
\begin{equation*}
      \frac{|\{f>\tau\}\cap \Omega_\delta|}{|\{f>\tau\}|}\le \frac{|D\cap \Omega_\delta|}{\tfrac{1}{2}|\{f>t\}|}\le \frac{\vtheta_1|D|}{\sqrt{\vtheta_1}|D|}=\sqrt{\vtheta_1}. \end{equation*}
Hence, we can apply Proposition \ref{prop:isoperimetricinq} or \cite[Theorem 4.1]{DFV} to get 
\[
-\mu'(\tau)  \geq C\frac{\mu(\tau)^{\frac{2(n-1)}{n}}}{\int_{\{f=\tau\}} |\nabla f|}.
\]
Here and in the following calculation $C$  denotes a constant depending only on the dimension, and it may vary from one inequality to the next. \\
Now, using the fact  that  $f$ is the $\lambda_1(D)$-eigenfunction, we get
\begin{align*}
\int_{\{f=\tau\}} |\nabla f| &=\int_{\{f=\tau\}} \partial_n f =  \int_{\{f>\tau\}} \Delta f \\&= \int_{\{f>\tau\}} \lambda_1(D) f \\&\leq  \lambda_1(D) |D|^{1/2} \|f\|_{L^2(D)}\\&\leq\lambda_1(D) |D|^{1/2} (1+\|\tilde f\|_{L^2(D)}).
\end{align*}
 We now estimate $\|\tilde f\|_{L^2(D)}$. Using inequality \eqref{eq:tildef}, we obtain
   \begin{align*}
       \|\tilde f\|_{L^2(D)}&\le c_1^{-1/2}(2\sqrt{\vtheta_1}|D|)^{1/n}\left(\int_D|\nabla \tilde f|^2\right)^{1/2}\\
       &\le c_1^{-1/2}(2\sqrt{\vtheta_1}|D|)^{1/n}(\int_D|\nabla  f|^2)^{1/2}\\
       &\le c_1^{-1/2}(2\sqrt{\vtheta_1})^{1/n}\left(|D|^{\frac{2}{n}}\lambda_1(D)\right)^{1/2}{\left(1+\|\tilde f\|_{L^2(D)}\right)}\\
       &\le c_1^{-1/2}(2\sqrt{\vtheta_1})^{1/n}C_0^{1/2}{\left(1+\|\tilde f\|_{L^2(D)}\right)}
   \end{align*}
where in the last inequality we use the assumption   $\lambda_1(D)|D|^{2/n}\le C_0$. {Let $$A:= c_1^{-1/2}(2\sqrt{\vtheta_1})^{1/n}C_0^{1/2}.$$
  Thus, for $\vtheta_1$ small enough such that
  \begin{equation}\label{eq:delta1-condition5}1-A>0,\qquad\text{and}\qquad
  \frac{A}{1-A}  <\epsilon,
 \end{equation} we have 
  \begin{equation}\label{eq:tildefupperbound}
      \|\tilde f\|_{L^2(D)}\le \frac{A}{1-A}\le \epsilon.
      \end{equation}}
Therefore,
\[
-\mu'(\tau) \geq C \frac{\mu(\tau)^{\frac{2(n-1)}{n}}}{\lambda_1(D) |D|^{1/2}} \geq C \frac{\mu(t)^{\frac{2(n-1)}{n}}}{\lambda_1(D) |D|^{1/2}}. 
\]
Using the condition
   $\mu(t) - \mu(s) \leq \frac{\sqrt{\theta_1}}{2} \mu(t)$ 
and replacing the above estimate in \eqref{eq:mainestimate}, we get:
\begin{equation}
 C \frac{\mu(t)^{\frac{2(n-1)}{n}}}{\lambda_1(D) |D|^{1/2}} (s-t)\le \frac{\sqrt{\theta_1}}{2} \mu(t). 
\end{equation}
Rearranging and using $\mu(t) \geq 2\sqrt{\theta_1}|D|$, we conclude:
\begin{align*}
s-t &\leq C \frac{\sqrt{\theta_1} \lambda_1(D) |D|^{1/2}}{ \mu(t)^{\frac{2(n-1)}{n}-1}}\le C\frac{\sqrt{\theta_1} \lambda_1(D) |D|^{1/2}}{(\sqrt{\theta_1}|D|)^{\frac{n-2}{n}}}\\&=C\theta_1^{\frac{1}{n}} (\lambda_1(D) |D|^{2/n}) |D|^{-\frac{1}{2}}\le \frac{CC_0\theta_1^{\frac{1}{n}} }{\sqrt{|D|}}.
\end{align*}
\end{proof}
 
  Thus, {to obtain \eqref{eq:FMP 1.6},} it reduces to show that 
 \begin{equation}\label{eq:bf*}
 \frac{|B\cap\{f^*\le s\}|}{|B|}\le C_4(s\sqrt{|D|}+\eta(D)+\epsilon)\le C_4(t\sqrt{|D|}+\eta(D)+2\epsilon)\end{equation}
 for every $s\in [t,t+\epsilon)$ and $t\le \min\{\tilde t, {\frac{1}{4}}|D|^{-1/2}\}$. We can assume \label{page} $\epsilon\le \frac{1}{4}|\Omega|^{-1/2}\le\frac 14|D|^{-1/2}$. Thus \begin{equation}\label{sbound}
  s\le\frac{1}{2}|D|^{-1/2}.   
 \end{equation}
 
We consider the function 
 $f_s(x)=(f^*(x)-s)^+\in H^1_0((1-r)B)$, where  $r>0$ is chosen such that $\{f_s>0\}=\{f^*>s\}=(1-r)B$. 
 Note that 
 \begin{equation}\label{eq:nr|B|}
     |B\cap\{f^*\le s\}|=|B\setminus \{f_s>0\}|=(1-(1-r)^n)|B|\le nr|B|.\end{equation}
 Hence, we need to estimate $r$.
We can use 
 $f_s(x)$ as a test function for $\lambda_1^D((1-r)B)$. \begin{align*}
     \frac{\lambda_1^D(B)}{(1-r)^2}=\lambda_1^D((1-r)B)&\le \frac{\int_{(1-r)B}|\nabla f_s|^2}{\int_{(1-r)B}f_s^2}
     \le \frac{\int_{B}|\nabla f^*|^2}{\int_{(1-r)B}f_s^2}\\
    \text{\small By Lemma \ref{lemma:pz}~~~~} &\le \frac{\int_{D}|\nabla f|^2}{\int_{(1-r)B}f_s^2}\\
    &\le\frac{\lambda_1^D(D^*)(1+\eta(D))\int_{D}f^2}{\int_{(1-r)B}f_s^2}\\
     &\le\frac{\lambda_1^D(B)(1+\eta(D))\int_{D}f^2}{\int_{(1-r)B}f_s^2}
 \end{align*}
Taking the square root of both sides and using the fact that $\|f\|_{L^2(D)}\le 1+\|\tilde f\|_{L^2(D)}\le 1+\epsilon$, we get:
\begin{align}\label{eq:L2 norm tildef}
     \frac{1}{(1-r)}
    &\le\frac{(1+\eta(D))^{1/2}(1+\epsilon}){(\int_{(1-r)B}f_s^2)^{1/2}} .
    \end{align}
 
  After rearranging inequality \eqref{eq:L2 norm tildef} , we get 
 \begin{align}\label{eq:upperbound}
   \nonumber  \left(\int_{(1-r)B}f_s^2\right)^{1/2}&\le(1-r)(1+\eta(D))^{1/2}\left(1+\epsilon\right)\\
  \nonumber   &\le (1-r)(1+\frac{\eta(D)}{2})(1+\epsilon)\\
 \nonumber      &\le (1+\epsilon)(1-r+\frac{\eta(D)}{2})\\
     &\le (1+\epsilon)(1-r)+\eta(D).
\end{align}

 On the other hand,  we have  
\begin{align}\label{eq:sqrt>0}
     \nonumber \left(\int_{(1-r)B}f_s^2\right)^{1/2}&= \left(\int_{(1-r)B}(f^*-s)_+^2\right)^{1/2}\\
 \nonumber     &= \left(\int_{(1-r)B}(f^*-s)^2\right)^{1/2}\\
 \nonumber     \text{\small by the triangle inequality\hspace{2mm}}&\ge \left(\int_B(f^*)^2-\int_{B\setminus(1-r)B} (f^*)^2\right)^{1/2}-s((1-r)^n|B|)^{1/2}\\
 \nonumber    &\ge \left(\int_B(f^*)^2-\int_{B\setminus(1-r)B} (f^*)^2\right)^{1/2}-s|B|^{1/2}\\
    \nonumber  &\ge \left(\int_{D^*}(f^*)^2-\left(\int_{D^*\setminus B}+\int_{B\setminus(1-r)B}\right) (f^*)^2\right)^{1/2}-s|B|^{1/2}\\
 \text{\small using $\|f\|_{L^2(D)}\ge\|\hat f\|_{L^2(D)}=1$}\hspace{3mm}  \nonumber   &\ge \left(1-\left(\int_{D^*\setminus B}+\int_{B\setminus(1-r)B}\right) (f^*)^2\right)^{1/2}-s|B|^{1/2}\\
    \nonumber   &\ge \left(1-s^2\left(|D^*\setminus (1-r)B)|\right)\right)^{1/2}-s|B|^{1/2}\\
     &\ge \left(1-s^2|D^*|\right)^{1/2}-s|B|^{1/2}\\
    \nonumber   &\ge 1-s|D^*|^{1/2}-s|B|^{1/2}\\
   \nonumber    &\ge 1-s\left(\frac{|B|}{1-\vtheta_1}\right)^{1/2}-s|B|^{1/2}.
 \end{align}
The inequality, \eqref{eq:volD} and the bound on $s$ in \eqref{sbound} imply that $1 - s^2|D^*| \ge \frac{1}{2} > 0$, so \eqref{eq:sqrt>0} and the two inequalities above are valid.
 The last inequality is the consequence of the following.  
 We use the assumption $|D\cap \Omega_\delta|\le \vtheta_1|D|$ to get 
$
 |D\cap U^c|\le |D\cap \Omega_\delta|\le \vtheta_1|D|.
 $
Thus, $$|B|=|D\cap U|=|D|-|D\cap U^c|\ge|D|(1-\vtheta_1).$$\\
{Assuming \begin{equation}\label{cond:delta1}
    1-\vtheta_1\ge \frac{1}{4},
\end{equation}} we have
\begin{equation}\label{eq:lowerbound}
    \left(\int_{(1-r)B}f_s^2\right)^{1/2}\ge 1-3s|B|^{1/2}.
\end{equation}
Combining and rearranging \eqref{eq:upperbound} and \eqref{eq:lowerbound}, we get
\[r\le \epsilon+\eta(D)+3s\sqrt{|B|}\le \epsilon+\eta(D)+3s\sqrt{|D|}.\]
Putting this into \eqref{eq:nr|B|}, we obtain inequality \eqref{eq:FMP 1.6} with $C_3=\max\{8, 6n\}$.\\

\noindent\textbf{Step 5.}
For the final step, we use \eqref{eq:main} to show that there exists $t\in (0,\tilde t)$ such that $\tilde  A(\{f>t\})$ is bounded above in terms of $\eta(D)$. Then we use it in inequality \eqref{eq:FMP 1.6} to conclude. 

Using the identity $\lambda_1^D(D^*)|D^*|^{\frac 2n}=\lambda_1^D(\bb)|\bb|^{\frac 2n}$, we can rewrite inequality \eqref{eq:main}  as 
 \begin{equation}\label{eq:main2}
     \int_{0}^{\tilde t}\frac{\tilde A(\{f>t\})^2\mu(t)^{\frac{2(n-1)}{n}}}{|\mu'(t)|}\le C_5|D|^{-\frac{2}{n}}\left(\eta(D)+2\epsilon\right).
     \end{equation}

     W.l.o.g, we assume $\eta(D)\le \delta_3$ where $\delta_3<1$ is small enough {(depending only on $n$)} to be chosen later. We now use \eqref{eq:bf*} to obtain a lower bound for $\mu(t)$. Note that in \eqref{eq:bf*}, $|B|=|D\cap U|$ where $U\subseteq\Omega$ is any open set containing $\Omega\setminus\Omega_\delta$. We take  $U=\Omega$, and thus  $|B|=|D\cap \Omega|=|D|$. Hence, for every $t\le \min\{{\frac{1}{4}}|D|^{-1/2},\tilde t\}$, we have
\begin{align*}
     \mu(t)&=|\{f>t\}|\\
    &\ge|D|-|D\cap \{f\le t\}|\\
     &\ge |B|-|B\cap\{f^*\le t\}|\\
     &\ge |B|-|B|C_4(t\sqrt{|D|}+\eta(D)+\epsilon).
\end{align*}
Let $t_1=\min\left\{\frac{1}{6C_4\sqrt{|D|}},{\frac{1}{4\sqrt{|D|}}},\tilde t\right\}$, $\delta_3\le \frac{1}{6C_4}$ and $\epsilon\le \frac{1}{6C_4}$. Then for any $t\in (0,t_1)$, using \eqref{eq:volD},
$$\mu(t)\ge\frac{|B|}{2}=\frac{|D|}{2}$$ for any $t\in (0,t_1)$. As a result we obtain
 \begin{equation}\label{eq:main3}
     \int_{0}^{t_1}\frac{|D|^{2}\tilde A(\{f>t\})^2}{|\mu'(t)|}\le {C_6}\left(\eta(D)+2\epsilon\right).
     \end{equation}

    We now follow the same line of argument as in \cite{FMP09} and introduce two positive parameters $\theta$ and $\sigma$ (that are fixed below). Let us estimate the $1$-dimensional Hausdorff measure of the following two sets. 
     \[I_1:=\left\{t\in(0,t_1):\frac{\tilde A(\{f>t\})^2}{|\mu'(t)|}\ge \theta\right\},\qquad I_2:=\left\{t\in(0,t_1):|\mu'(t)|\ge |D|\theta^{-\sigma}\right\}.\]
     From inequality \eqref{eq:main3}, we have $$|I_1|\le \frac{C_6(\eta(D)+2\epsilon)}{|D|^{2}\theta},\qquad |I_2|\le \int_{I_2}|\mu'(t)||D|^{-1}\theta^{\sigma}\le \theta^{\sigma},$$
     and therefore
     \[|I_1\cup I_2|\le \left(\frac{C_6(\eta(D)+2\epsilon)}{|D|^2\theta}+\theta^{\sigma}\right).\]
     Taking $\theta=\left(\frac{\eta(D)+2\epsilon}{|D|^2}\right)^{\frac{1}{1+\sigma}}$ and $\sigma=\frac{1}{3}$, we get:
 \[|I_1\cup I_2|\le C_7\left(\frac{\eta(D)+2\epsilon}{|D|^2}\right)^{\frac{\sigma}{1+\sigma}} =\frac{C_7(\eta(D)+2\epsilon)^{\frac 14}}{\sqrt{|D|}}.\]
Now note that 
\begin{align*}
\tilde t^2 |D|&\ge\int_{\{f\le \tilde t\}}f^2+\int_{\{f> \tilde t\}}\tilde t^2= \int_{D}\hat f^2=1.
\end{align*}
Thus, $\tilde t\ge\frac{1}{\sqrt{|D|}}$. As a result $t_1\ge \frac{C_8}{\sqrt{|D|}}$, where $C_8=\min\{\frac{1}{6C_4},\frac 14\}$.
Assuming $\delta_3$ and $\epsilon$ are sufficiently small so that $\epsilon \le\frac{\delta_3}{2}$ and $2C_7(2\delta_3)^{1/4}\le C_8$ implies that the set
 \begin{align*}
     \mathcal B&=(0,{t_1})\cap\left(0,\frac{2C_7(\eta(D)+2\epsilon)^{1/4}}{\sqrt{D}}\right)\setminus (I_1\cup I_2)\\&=\left(0,\frac{2C_7(\eta(D)+2\epsilon)^{1/4}}{\sqrt{D}}\right)\setminus (I_1\cup I_2).\end{align*}
 has nonzero measure. In particular for any $t\in \mathcal B$, we have 
\[\tilde A(\{f>t\})^2\le |\mu'(t)|\theta\le|D| \theta^{1-\sigma}=|D|\left(\frac{\eta(D)+2\epsilon}{|D|^2}\right)^{\frac{1-\sigma}{1+\sigma}}=(\eta(D)+2\epsilon)^{\frac 12}.\]
Plugging into \eqref{eq:FMP 1.6} for such $t$, we get
    \begin{align}\label{eq:sigma=1/3}
        \tilde A(D)&\le C\left((\eta(D)+2\epsilon)^{\frac{\sigma}{1+\sigma}}+\eta(D)+\epsilon+(\eta(D)+2\epsilon)^{\frac{1-\sigma}{2(1+\sigma)}}\right)\\
        \nonumber&\le C(\eta(D)+2\epsilon)^{\frac{1}{4}}.
        \end{align}
       Note that the choice of $\sigma=\frac{1}{3}$ is also made so that the power of the first and the last term in the right-hand side of \eqref{eq:sigma=1/3} are equal. We conclude that
        \[\lambda_1(D)\ge \left(1-2\epsilon+C\tilde A(D)^4\right)\lambda_1^D(D^*).\]

        \noindent Note that we only get a quantitative version when the term $C\tilde A(D)^4-2\epsilon$ is positive.
\end{proof}


\begin{thebibliography}{APPV23}

\bibitem[APP22]{Ant}
Gioacchino Antonelli, Enrico Pasqualetto, and Marco Pozzetta.
\newblock Isoperimetric sets in spaces with lower bounds on the {R}icci
  curvature.
\newblock {\em Nonlinear Anal.}, 220:Paper No. 112839, 59, 2022.

\bibitem[APPV23]{Ant2}
Gioacchino Antonelli, Enrico Pasqualetto, Marco Pozzetta, and Ivan~Yuri Violo.
\newblock Topological regularity of isoperimetric sets in {PI} spaces having a
  deformation property.
\newblock {\em Proceedings of the Royal Society of Edinburgh: Section A
  Mathematics}, page 1–23, 2023.

\bibitem[BCM24]{BCM}
Thomas Beck, Yaiza Canzani, and Jeremy~L. Marzuola.
\newblock Uniform upper bounds on {C}ourant sharp {N}eumann eigenvalues of
  chain domains.
\newblock {\em J. Geom. Anal.}, 34(9):Paper No. 262, 39, 2024.

\bibitem[BG16]{vdBkG16cs}
Michiel van~den Berg and Katie Gittins.
\newblock On the number of {C}ourant-sharp {D}irichlet eigenvalues.
\newblock {\em J. Spectr. Theory}, 6(4):735--745, 2016.

\bibitem[BH16]{BH16}
Pierre B\'erard and Bernard Helffer.
\newblock The weak {P}leijel theorem with geometric control.
\newblock {\em J. Spectr. Theory}, 6(4):717--733, 2016.

\bibitem[BM82]{BM82}
Pierre B\'{e}rard and Daniel Meyer.
\newblock In\'{e}galit\'{e}s isop\'{e}rim\'{e}triques et applications.
\newblock {\em Ann. Sci. \'{E}cole Norm. Sup. (4)}, 15(3):513--541, 1982.

\bibitem[Bou15]{B15}
Jean Bourgain.
\newblock On {P}leijel's nodal domain theorem.
\newblock {\em Int. Math. Res. Not. IMRN}, (6):1601--1612, 2015.

\bibitem[BS80]{BS80}
Mikhail~\v{S}. Birman and Mikhail~Z. Solomjak.
\newblock {\em Quantitative analysis in {S}obolev imbedding theorems and
  applications to spectral theory}, volume 114 of {\em American Mathematical
  Society Translations, Series 2}.
\newblock American Mathematical Society, Providence, R.I., 1980.
\newblock Translated from the Russian by F. A. Cezus.

\bibitem[DFV24]{DFV}
Nicol\`{o} De{~}Ponti, Sara Farinelli, and Ivan~Yuri Violo.
\newblock Pleijel nodal domain theorem in non-smooth setting.
\newblock {\em Trans. Amer. Math. Soc. Ser. B}, 11(32):1138--1182, 2024.

\bibitem[Don14]{D14}
Harold Donnelly.
\newblock Counting nodal domains in {R}iemannian manifolds.
\newblock {\em Ann. Global Anal. Geom.}, 46(1):57--61, 2014.

\bibitem[Fed59]{Fed59}
Herbert Federer.
\newblock Curvature measures.
\newblock {\em Trans. Amer. Math. Soc.}, 93:418--491, 1959.

\bibitem[FG12]{FG12}
Rupert~L. Frank and Leander Geisinger.
\newblock Semi-classical analysis of the {L}aplace operator with {R}obin
  boundary conditions.
\newblock {\em Bull. Math. Sci.}, 2(2):281--319, 2012.

\bibitem[FH24]{FH24}
Rupert~L. Frank and Bernard Helffer.
\newblock On {C}ourant and {P}leijel theorems for sub-{R}iemannian
  {L}aplacians, 2024.
\newblock arXiv:2402.13953v3 [math.SP].

\bibitem[FMP08]{FMP08}
Nicola Fusco, Francesco Maggi, and Aldo Pratelli.
\newblock The sharp quantitative isoperimetric inequality.
\newblock {\em Ann. of Math. (2)}, 168(3):941--980, 2008.

\bibitem[FMP09]{FMP09}
Nicola Fusco, Francesco Maggi, and Aldo Pratelli.
\newblock Stability estimates for certain {F}aber-{K}rahn, isocapacitary and
  {C}heeger inequalities.
\newblock {\em Ann. Sc. Norm. Super. Pisa Cl. Sci. (5)}, 8(1):51--71, 2009.

\bibitem[GL20]{GL18}
Katie Gittins and Corentin L\'{e}na.
\newblock Upper bounds for {C}ourant-sharp {N}eumann and {R}obin eigenvalues.
\newblock {\em Bull. Soc. Math. France}, 148(1):99--132, 2020.

\bibitem[GT01]{GT}
David Gilbarg and Neil~S. Trudinger.
\newblock {\em Elliptic Partial Differential Equations of Second Order}, volume
  224 of {\em Classics in Mathematics}.
\newblock Springer Berlin / Heidelberg, Berlin, Heidelberg, 2 edition, 2001.

\bibitem[HS24]{HS}
Asma Hassannezhad and David Sher.
\newblock On {P}leijel's nodal domain theorem for the {R}obin problem.
\newblock {\em Bulletin of the London Mathematical Society}, 56(4):1449--1467,
  2024.

\bibitem[Jon81]{Jones}
Peter~W. Jones.
\newblock Quasiconformal mappings and extendability of functions in {S}obolev
  spaces.
\newblock {\em Acta Math.}, 147(1-2):71--88, 1981.

\bibitem[L{\'{e}}n19]{cL16}
Corentin L{\'{e}}na.
\newblock Pleijel's nodal domain theorem for {N}eumann and {R}obin
  eigenfunctions.
\newblock {\em Ann. Inst. Fourier (Grenoble)}, 69(1):283--301, 2019.

\bibitem[Mir03]{Mir}
Michele Miranda, Jr.
\newblock Functions of bounded variation on ``good'' metric spaces.
\newblock {\em J. Math. Pures Appl. (9)}, 82(8):975--1004, 2003.

\bibitem[Mit20]{Mit2020ZeroSet}
B.~S. Mityagin.
\newblock The zero set of a real analytic function.
\newblock {\em Mat. Zametki}, 107(3):473--475, 2020.

\bibitem[MS79]{MS79}
Olli Martio and Jukka Sarvas.
\newblock Injectivity theorems in plane and space.
\newblock {\em Ann. Acad. Sci. Fenn. Ser. A I Math.}, 4(2):383--401, 1979.

\bibitem[MS20]{MS20}
Andrea Mondino and Daniele Semola.
\newblock Polya-{S}zego inequality and {D}irichlet {$p$}-spectral gap for
  non-smooth spaces with {R}icci curvature bounded below.
\newblock {\em J. Math. Pures Appl. (9)}, 137:238--274, 2020.

\bibitem[MT99]{MT99}
Marius Mitrea and Michael Taylor.
\newblock Boundary layer methods for {L}ipschitz domains in {R}iemannian
  manifolds.
\newblock {\em J. Funct. Anal.}, 163(2):181--251, 1999.

\bibitem[NV22]{NV22}
Francesco Nobili and Ivan~Yuri Violo.
\newblock Rigidity and almost rigidity of {S}obolev inequalities on compact
  spaces with lower {R}icci curvature bounds.
\newblock {\em Calc. Var. Partial Differential Equations}, 61(5):Paper No. 180,
  65, 2022.

\bibitem[Pee57]{Peet57}
Jaak Peetre.
\newblock A generalization of {C}ourant's nodal domain theorem.
\newblock {\em Math. Scand.}, 5:15--20, 1957.

\bibitem[Ple56]{Pl56}
{\AA}ke Pleijel.
\newblock Remarks on {C}ourant's nodal line theorem.
\newblock {\em Comm. Pure Appl. Math.}, 9:543--550, 1956.

\bibitem[Pol09]{P}
Iosif Polterovich.
\newblock Pleijel's nodal domain theorem for free membranes.
\newblock {\em Proceedings of the American Mathematical Society},
  137(3):1021--1024, 2009.

\bibitem[Ste14]{S14}
Stefan Steinerberger.
\newblock A geometric uncertainty principle with an application to {P}leijel's
  estimate.
\newblock {\em Ann. Henri Poincar\'e}, 15(12):2299--2319, 2014.

\bibitem[V{\"{a}}i88]{vai88}
Jussi V{\"{a}}is\"{a}l\"{a}.
\newblock Uniform domains.
\newblock {\em Tohoku Math. J. (2)}, 40(1):101--118, 1988.

\bibitem[Ver82]{Ver82}
Gregory~Charles Verchota.
\newblock {\em Layer potentials and boundary value problems for Laplace's
  equation on lipschitz domains}.
\newblock ProQuest LLC, Ann Arbor, MI, 1982.
\newblock Thesis (Ph.D.)--University of Minnesota.

\end{thebibliography}
\end{document}